\theoremstyle{plain}
\newtheorem{thm}{Theorem}[section]
\newtheorem{prop}[thm]{Proposition}
\newtheorem{lem}[thm]{Lemma}
\newtheorem{cor}[thm]{Corollary}
\theoremstyle{definition}
\newtheorem{defn/}[thm]{Definition}
\newenvironment{defn}{ \pushQED{\qed}\begin{defn/}} {\popQED\end{defn/}}
\theoremstyle{remark}
\newtheorem{rem/}[thm]{Remark}
\newenvironment{rem}{ \pushQED{\qed}\begin{rem/}} {\popQED\end{rem/}}
\newtheorem{ex/}[thm]{Example}
\newenvironment{ex}{ \pushQED{\qed}\begin{ex/}} {\popQED\end{ex/}}
\numberwithin{equation}{section}
\definecolor{ceruleanblue}{rgb}{0.16, 0.32, 0.75}
\newenvironment{enumeratea}{
  \begin{enumerate}[label = (\alph*)]}{
  \end{enumerate}}
\newcommand{\R}{\mathbb{R}}
\newcommand{\Z}{\mathbb{Z}}
\DeclareMathOperator{\Aut}{Aut}
\DeclareMathOperator{\dom}{dom}
\DeclareMathOperator{\Diff}{Diff}
\DeclareMathOperator{\Hol}{Hol}
\DeclareMathOperator{\hol}{hol}
\DeclareMathOperator{\germ}{germ}
\DeclareMathOperator{\id}{id}
\DeclareMathOperator{\Iso}{Iso}
\DeclareMathOperator{\Lie}{Lie}
\DeclareMathOperator{\pr}{pr}
\DeclareMathOperator{\stab}{stab}
\newcommand{\ol}[1]{\overline{#1}}
\newcommand{\ti}[1]{\tilde{#1}}
\newcommand{\cl}[1]{\mathcal{#1}}
\newcommand{\fk}[1]{\mathfrak{#1}}
\newcommand{\mbf}[1]{\mathbf{#1}}
\newcommand{\rra}{\rightrightarrows}
\newcommand{\xar}[1]{\xrightarrow[]{#1}}
\newcommand{\fiber}[2]{\tensor[_{#1}]{\times}{_{#2}}}
\newcommand{\Bot}{\text{bot}}
\newcommand{\Top}{\text{top}}
\newcommand{\Mid}{\text{mid}}
\newcommand{\define}[1]{\textbf{#1}}
\begin{document}

\title{Riemannian foliations and quasifolds}
% {Riemannian foliations and Quasifold}

% ---------------------------------------
\author{Yi Lin}

\address{Y. Lin \\
  Department of Mathematical Sciences \\
  Georgia Southern University \\
  Statesboro, GA, 30460 USA}
\email{yilin@georgiasouthern.edu}

\author{David Miyamoto}

\address{D. Miyamoto \\
  Max Planck Institute for Mathematics \\
  Vivatsgasse 7, 53111 Bonn, Germany}
\email{miyamoto@mpim-bonn.mpg.de}

% ================================

% --------------------------------------

\subjclass[2020]{57S25; 57R91}

\keywords{Riemannian foliation, quasifolds, Molino theory, diffeology}
\date{\today}

% \dedicatory{ }

% \commby{}
% ---------------------------------

\begin{abstract}
It is well known that, by the Reeb stability theorem, the leaf space of a Riemannian foliation with compact leaves is an orbifold. We prove that, under mild completeness conditions, the leaf space of a Killing Riemannian foliation is a diffeological quasifold: as a diffeological space, it is locally modelled by quotients of Cartesian space by countable groups acting affinely. Furthermore, we prove that the holonomy groupoid of the foliation is, locally, Morita equivalent to the action groupoid of a countable group acting affinely on Cartesian space.
\end{abstract}

% -----------------------------------------------------------
\maketitle
% -----------------------------------------------------------
\tableofcontents
\section{Introduction}

A regular foliation of a manifold $M$ is a partition $\cl{F}$ of $M$ into submanifolds of a fixed dimension, called leaves, which fit together smoothly. To study the differential geometry of the leaf space $M/\cl{F}$, an immediate obstacle is whereas $M$ and $\cl{F}$ are smooth, the leaf space $M/\cl{F}$ is rarely a manifold. For instance, consider the foliation $(M,\cl{F})$ of an open M\"{o}bius strip, where $M$ is formed by gluing the vertical edges of the square $[-1, 1] \times (-1, 1)$ with a half twist, and where the leaves are the projections of horizontal lines. Then the leaf space is homeomorphic to $(-1, 1)/\Z_2$, which has a singularity at the origin.  Nevertheless, the leaf space of a foliation always admits the structure of a diffeology, a generalized smooth structure on a set introduced by Souriau \cite{S79}. Indeed, a diffeology propagates nicely to quotients of manifolds, and allows a clean and precise notion of “local model” for singular spaces. For example, this is the case for orbifolds, which are spaces that are locally modelled by quotients of Cartesian space $\R^n$ by smooth actions of finite groups. It is not difficult to show the leaf space of our foliation of the M\"{o}bius strip is diffeologically diffeomorphic  to the orbifold $(-1,1)/\Z_2$. More generally, the Reeb stability theorem implies:

\begin{thm}
  \label{thm:leaf-space-orbifold}
  If $(M, \cl{F})$ is a foliation, and every leaf is compact with finite holonomy, then $M/\cl{F}$ is an orbifold.
\end{thm}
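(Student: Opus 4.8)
The plan is to prove that the leaf space $M/\cl{F}$ is an orbifold by producing, around each leaf, an orbifold chart, and then checking that these charts are compatible both as orbifold charts and as descriptions of the diffeology on $M/\cl{F}$. Let me think about how to actually build these charts.

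The key input is the Reeb stability theorem. Given a compact leaf $L$ with finite holonomy group $H$, the stability theorem provides a saturated tubular neighborhood $U$ of $L$ that is diffeomorphic to the associated bundle $\widetilde{L} \times_H D$, where $\widetilde{L} \to L$ is the holonomy cover (a finite cover with deck group $H$), and $D \subseteq \R^q$ is a small disk transverse to $L$ (here $q$ is the codimension). The holonomy $H$ acts on $D$ by the linear holonomy action (or at least by some effective action fixing the origin), and $L$ corresponds to the zero section. The crucial point is that the restriction of the foliation $\cl{F}$ to $U$, under this identification, is exactly the foliation whose leaves are the images of $\widetilde{L} \times \{d\}$. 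Therefore the leaf space $U/\cl{F}$ is canonically identified with $D/H$, the quotient of a disk in $\R^q$ by the finite group $H$. This gives the orbifold chart: $D/H$ with the finite group $H$ acting on $D$.

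So the proof proceeds in the following steps. First, fix a leaf and invoke Reeb stability to obtain the saturated neighborhood $U \cong \widetilde{L} \times_H D$ and the identification $U/\cl{F} \cong D/H$; this identification is a homeomorphism (using the quotient topologies) and, more importantly, I would check it is a diffeomorphism of diffeological spaces, since both sides carry the quotient diffeology and the map is induced by a fiber-preserving diffeomorphism. Second, collect these charts over all leaves to cover $M/\cl{F}$, and verify the orbifold compatibility condition: on overlaps, the transition maps between the disk quotients are induced by diffeomorphisms equivariant for the respective finite group actions. The compatibility should follow because on an overlap, the two transversal disks are related by the holonomy transport of the foliation, which is smooth and respects the leaf structure, hence descends to the required equivariant transition. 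Third, confirm that the orbifold diffeology (the one generated by these charts) coincides with the quotient diffeology on $M/\cl{F}$ inherited from $M$; this is where the diffeological framework pays off, because both are quotient diffeologies of compatible local models, so they agree locally and hence globally.

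The main obstacle I anticipate is the compatibility of charts on overlaps, specifically ensuring that transition maps are equivariant isomorphisms of orbifold charts rather than mere diffeomorphisms of quotient spaces. Since an orbifold chart records the group action and not just the quotient, I must show that holonomy transport between two transversals intertwines the two holonomy actions up to conjugation and a group isomorphism, and that this lifts the downstairs transition map. A clean way to handle this is to phrase everything in terms of the holonomy groupoid: the restriction of the holonomy groupoid to a complete transversal is an étale groupoid, and Reeb stability with finite holonomy makes it a proper étale groupoid with finite isotropy, which is precisely the groupoid presentation of an orbifold. Passing to the associated diffeological quotient then yields the orbifold structure together with automatic chart compatibility, sidestepping the need to match transition maps by hand. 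The secondary technical point to be careful about is the passage between the topological statement (leaf space is locally $D/H$) and the diffeological one (the quotient diffeologies agree), which requires knowing that the diffeology on $M/\cl{F}$ is locally generated by plots factoring through the transversal $D$.
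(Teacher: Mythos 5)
Your route is the paper's route: the paper gives no detailed proof of Theorem \ref{thm:leaf-space-orbifold}, deriving it instead as ``a consequence of the Reeb stability theorem,'' citing \cite[Theorem 2.15]{MM03}, which is exactly the input you use. Your first step is also essentially correct, and can even be phrased with the paper's own machinery: $\widetilde{L} \times D \to U$ is a finite Galois cover for which the pullback foliation is simple with leaf space $D$, so $(U, \cl{F}|_U)$ is developable and Proposition \ref{prop:develop-leaf-space} yields the diffeological diffeomorphism $U/\cl{F} \cong D/H$.

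There is, however, one concrete gap. Definition \ref{def:quasifold} requires the local groups to be finite subgroups of $\Aff(\R^n)$ acting affinely on an invariant open set, while Reeb stability only gives the holonomy group $H$ acting on the transverse disk $D$ by diffeomorphisms representing holonomy germs; your hedge ``or at least by some effective action fixing the origin'' therefore does not meet the definition as the paper states it. The fix is standard but must be said: since $H$ is finite, average a Riemannian metric on $D$ to make the action isometric and use the exponential map (Bochner linearization) to conjugate it, near the fixed point, to the linear holonomy representation of $H$ on $N_xL$; equivalently, cite \cite{IZKZ10}, where quotients of $\R^n$ by finite groups of diffeomorphisms are shown to be diffeological orbifolds in this linear sense. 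On the other hand, your steps two and three are unnecessary: Definition \ref{def:quasifold} imposes no compatibility condition whatsoever --- an atlas is just a covering family of charts, each a diffeological diffeomorphism onto some $V/\Gamma$ --- so once each chart $U/\cl{F} \cong D/H$ is produced, the proof is finished. Dispensing with transition-map bookkeeping (which, if one does want it, follows automatically from lifting results like Corollary \ref{cor:lifting}) is precisely what the diffeological definition buys, and is the reason the paper can treat this theorem as an immediate consequence of Reeb stability.
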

The requirement of finite holonomy is subtle, but automatic if $\cl{F}$ has compact leaves and is Riemannian, meaning $M$ admits a Riemannian metric compatible with $\cl{F}$. In this case:
\begin{cor}
  The leaf space of a Riemannian foliation with compact leaves is an orbifold.
\end{cor}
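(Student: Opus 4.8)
The plan is to deduce the corollary from Theorem~\ref{thm:leaf-space-orbifold}: since the leaves are assumed compact, it suffices to show that in a Riemannian foliation with compact leaves every leaf $L$ has finite holonomy group $\Hol(L)$. First I would fix a bundle-like metric on $M$ and pass to the transverse data: choosing a small geodesic transversal $T$ through a point $x \in L$, the holonomy pseudogroup acts on $(T, g_T)$ by germs of \emph{isometries} of the transverse metric, and $\Hol(L)$ is realised as the isotropy group of germs fixing $x$.

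The key rigidity input is that a local isometry fixing a point is determined by its derivative there. Writing $T = \exp_x(B)$ in geodesic normal coordinates, each holonomy germ $\phi$ satisfies $\phi(\exp_x v) = \exp_x(d\phi_x v)$, so on $T$ it is literally the restriction of the linear map $d\phi_x \in \mathrm{O}(q)$, where $q = \operatorname{codim}\cl{F}$. This gives a faithful homomorphism $\Hol(L) \hookrightarrow \mathrm{O}(q)$, and I would let $H = \overline{\Hol(L)}$ be its closure, a compact Lie group acting linearly, faithfully, and isometrically on the ball $T$.

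Next I would exploit compactness of the leaves. A holonomy transformation moves points of $T$ within single leaves, so for $y \in T$ the orbit $\Hol(L)\cdot y$ lies in $L_y \cap T$, where $L_y$ is the leaf through $y$; since $L_y$ is compact and $T$ is a small transversal, this intersection is finite, hence so is every $\Hol(L)$-orbit in $T$. As the action is continuous, $H\cdot y = \overline{\Hol(L)}\cdot y \subseteq \overline{\Hol(L)\cdot y} = \Hol(L)\cdot y$ is finite as well. Thus the connected compact group $H^0$ has finite, hence trivial, orbits, so $H^0$ fixes $T$ pointwise; faithfulness forces $H^0 = \{\id\}$, whence $H$ is a compact Lie group with trivial identity component, i.e.\ finite. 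Therefore $\Hol(L) \subseteq H$ is finite and Theorem~\ref{thm:leaf-space-orbifold} applies.

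I expect the main obstacle to be the transverse rigidity step, namely making precise that the holonomy pseudogroup of a bundle-like metric really does consist of germs of transverse isometries and that these are pinned down by their $1$-jets, so that the abstract holonomy group embeds into the compact group $\mathrm{O}(q)$. Once the action is linearised in normal coordinates, the finite-orbit observation and the elementary structure theory of compact Lie groups close the argument routinely.
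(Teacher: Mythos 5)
Your overall reduction is the same as the paper's: the paper obtains this corollary from Theorem \ref{thm:leaf-space-orbifold} together with the fact, which it simply cites (as Theorem 2.6 of \cite{MM03}), that in a Riemannian foliation with all leaves compact every holonomy group is finite. You instead try to prove that input directly. Your step (a) --- holonomy consists of germs of transverse isometries, which are pinned down by their $1$-jets, giving a faithful map $\Hol(L,x) \hookrightarrow O(q)$ --- and your step (c) --- a compact subgroup of $O(q)$ whose orbits on a ball are all finite must be finite --- are both fine, and contrary to what you expect they are the standard, unproblematic part. The genuine gap is in step (b), the finite-orbit claim. Elements of $\Hol(L,x)$ are \emph{germs} at $x$: they do not act on a fixed transversal $T$, so ``$\Hol(L)\cdot y \subseteq L_y \cap T$'' is not even meaningful as stated. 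Your fix is to replace each germ $\phi$ by its linear extension $\exp_x \circ\, d\phi_x \circ \exp_x^{-1}$, which does act on all of $T$; but this extension is known to agree with an actual (leaf-preserving) holonomy transformation only on the germ-dependent, possibly tiny, neighbourhood of $x$ where that transformation is defined. Nothing in your argument shows that the linearized map carries an arbitrary $y \in T$ into the leaf $L_y$, and that containment is exactly what your closure argument consumes.

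The repair requires a real geometric input about bundle-like metrics: leaves of a Riemannian foliation are locally equidistant (Reinhart), so a compact leaf $L$ has arbitrarily small \emph{saturated} tubular neighbourhoods $T_\delta(L)$, and for every leaf $L' \subseteq T_\delta(L)$ the normal projection $L' \to L$ is a finite covering map. Lifting loops of $L$ through these coverings defines an honest, globally defined action of $\pi_1(L,x)$ on the normal disk $D_x$ by transverse isometries, whose orbits are precisely the sets $L_y \cap D_x$, finite because the leaves $L_y$ are compact; its linear parts are your $d\phi_x$'s, and with this action in hand your closure argument in $O(q)$ goes through verbatim. Note also that compactness of \emph{all} leaves is genuinely needed in step (b), not just compactness of $L$: the suspension of an irrational rotation of a disk is a Riemannian foliation with a compact central leaf whose holonomy group is infinite, the nearby leaves being non-compact. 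So step (b) is where the hypothesis enters and where the content of the cited theorem lies; it cannot be dismissed as routine, whereas the rigidity step you flagged as the main obstacle is the easy part.
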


The compactness requirement is necessary. As a counterexample, consider a Kronecker foliation induced by irrational flows on a $2$-torus. This foliation is Riemannian, but its leaf space is non-Hausdorff and is not an orbifold. Nevertheless, the leaf space of a Kronecker foliation carries the structure of a quasifold, spaces introduced by Prato in \cite{Pr99} and \cite{Pr01} in order to generalize the Delzant construction in toric geometry to simple non-rational polytopes. Quasifolds are locally modelled by quotients of Cartesian spaces $\R^n$ by smooth affine actions of countable groups, and fit naturally in the category of diffeological spaces, cf. \cite{IZP} and \cite{KM22}.

In this article, we show that the leaf spaces of Killing Riemannian foliations are diffeological quasifolds. The definition of a Killing Riemannian foliation is rather complicated (see Section \ref{Molino-theory}), so here we give three examples: Riemannian foliations on simply-connected manifolds are Killing; foliations of compact Riemannian manifolds whose leaves are orbits of connected Lie subgroups of isometries are Killing; and finally, Battaglia and Zaffran \cite{BZ17} show that every toric symplectic quasifold is equivariantly symplectomorphic to the leaf space of a Killing foliation.\footnote{More precisely, they show the relevant foliation is Riemannian, but mention that a careful reading of their proof also shows it is Killing.} We prove:

\begin{thm}
  \label{thm:leaf-space-quasifold}
  If $(M, \cl{F})$ is a complete Killing foliation of a connected manifold $M$, with complete transverse action of its structure algebra, then $M/\cl{F}$ is a diffeological quasifold.
\end{thm}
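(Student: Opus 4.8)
The plan is to build, around an arbitrary leaf, a diffeological chart that presents a neighbourhood of that leaf in $M/\cl{F}$ as a quotient $\R^n/\Gamma$ with $\Gamma$ a countable group acting affinely, and then to check that these charts assemble into a quasifold atlas; here $n = \dim M/\cl{F}$ is the codimension of $\cl{F}$.

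First I would localise. Fix $x \in M$ and a transversal $T \cong \R^n$ to $\cl{F}$ at $x$. Since the quotient diffeology propagates through the foliation submersion, a neighbourhood of the leaf $L_x$ through $x$ in $M/\cl{F}$ is, as a diffeological space, the quotient $T/H$ of $T$ by the holonomy pseudogroup $H$ of $\cl{F}$; thus it suffices to model $T/H$. Because $\cl{F}$ is Riemannian, $H$ acts on $T$ by local isometries of the transverse metric, and because $\cl{F}$ is Killing the structure algebra $\fk{g}$ --- which is abelian --- acts on $T$ by transverse Killing fields commuting with $H$. The two completeness hypotheses enter here: completeness of the transverse $\fk{g}$-action lets me integrate this infinitesimal action to a genuine action of the simply connected abelian group $\R^k$, with $k = \dim\fk{g}$, on $T$; and completeness of $\cl{F}$ ensures the holonomy is generated by complete transverse transformations, so that $H$ is, up to restriction, a group rather than a mere pseudogroup.

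Next I would linearise. Using that the structure-algebra fields are commuting transverse Killing fields, I would pass to coordinates identifying $T \cong \R^n$ in which the $\R^k$-action is by translations along a subspace $V \cong \R^k$ and in which the holonomy transformations are affine. The decisive structural fact, supplied by Molino--Salem closure theory, is that the $\R^k$-orbits are exactly the traces of the leaf closures on $T$, so that each $H$-orbit is dense in its ambient $\R^k$-orbit; consequently $H$ restricts on $V$ to a countable dense group of translations, while any holonomy of $L_x$ contributes only a finite linear part. Collecting these transformations yields a countable group $\Gamma$ acting affinely on $\R^n$ with $T/H = \R^n/\Gamma$, and comparing the two subductions $T \to T/H$ and $\R^n \to \R^n/\Gamma$ shows the diffeological quotients coincide, giving a quasifold chart around $L_x$. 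Overlap compatibility is then automatic, since transition maps are induced by the isometric holonomy and are affine in the linearised coordinates.

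I expect the technical heart, and the main obstacle, to be the linearisation step together with the identification of orbit closures. Concretely, one must show that the completeness assumptions force the transverse $\fk{g}$-action to integrate to a global affine $\R^k$-action whose orbit closures are precisely the leaf closures, and that the holonomy pseudogroup closes up, in the sense of Molino and Salem, to exactly this $\R^k$-action --- so that $H$ appears as a countable affine group and not as something wilder. Equally delicate is verifying that the holonomy, a priori only isometric for a possibly non-flat transverse metric, becomes genuinely affine in the coordinates adapted to the structure algebra; it is precisely the Killing, rather than merely Riemannian, hypothesis that underwrites this, and controlling the interplay between the discrete holonomy and the continuous transverse action is where the real work lies.
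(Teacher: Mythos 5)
Your proposal contains two genuine gaps, and the first is a false step. You fix a local transversal $T$ at $x$ and claim that completeness of the transverse $\fk{g}$-action lets you integrate the induced vector fields to a genuine action of $\R^k$ on $T$. This cannot work: completeness is a property of foliate representatives on $M$, and their flows do not preserve $T$; the projected fields on a small transverse disk are essentially never complete. (For the Kronecker foliation, the structure field projects to $\partial/\partial t$ on an arc $(-\epsilon,\epsilon)$, which admits no global flow.) Completeness can only be exploited on saturated objects, and this is exactly why the paper never works on a transversal: it restricts to the leaf closure $X$ (where Lemma \ref{killing-action-isotropy} makes $\fk{k}$ well defined and $(X,\cl{F}|_X)$ a complete $\fk{h}$-Lie foliation), develops it over $H$ via Fedida's theorem (Proposition \ref{prop:lifted-fedida}), pulls the normal bundle $NX$ up to the universal cover and pushes it down to a vector bundle $E \to H$ (Lemma \ref{quotient-bundle}); only there, on the manifold $E$, is Lie--Palais applied to integrate the $\fk{h}$-action. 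Note also that identifying a neighbourhood of the leaf in $M/\cl{F}$ with a quotient of $NX$ is not the tautology your localisation suggests; it is Proposition \ref{tubular}, which itself uses the Killing hypothesis.

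The second, more serious gap is the affinization step, which you flag as the ``technical heart'' but then only assert. Your route --- holonomy pseudogroup on a transversal, closed up in the sense of Molino--Salem to the $\R^k$-action, with coordinates making all holonomies simultaneously affine --- is essentially Haefliger's model, which the paper discusses in Section \ref{sec:comp-haefl-model} precisely to point out that affineness is \emph{not} obtained that way: Haefliger's local model is $G \times_K V$ with a countable dense group $\Lambda$ acting, and those actions are not obviously affine. Commuting with the $\R^k$ of translations does not force affineness (every map $(x,y)\mapsto(x+f(y),g(y))$ commutes with all translations in $x$), and the natural candidate trivialization $G\times_K V \to G/K \times V$, $[g,v]\mapsto([g],v)$, is not even well defined when $K$ acts nontrivially on $V$. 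What makes affineness work in the paper is the $\pi_1(X)$-equivariant \emph{global} trivialization $\Phi(e) = (h, F(h^{-1})(e))$ of $E \to H$, which exists because the integrated $H$-action on $E$ covers the free translation action of $H$ on the base $H$ (the developed leaf space); nothing in your transversal picture plays this role. As it stands, the countable group you produce is only known to act by isometries, and affineness --- the entire content of the theorem --- is left unestablished.
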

Lie groupoids give an alternative approach to transverse geometry. Specifically, we have the holonomy groupoid $\Hol(\cl{F})$ of a foliation $(M, \cl{F})$. For Killing foliations, we have:
\begin{cor}
In the setting of Theorem \ref{thm:leaf-space-quasifold}, $\Hol(\cl{F})$ is, locally, Morita equivalent to the action groupoids of countable groups acting affinely on a Cartesian space.
\end{cor}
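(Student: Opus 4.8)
The plan is to prove the corollary in two moves: first replace $\Hol(\cl{F})$, up to Morita equivalence, by a transverse \'etale groupoid, and then identify that \'etale groupoid with an action groupoid using the quasifold charts constructed in the proof of Theorem \ref{thm:leaf-space-quasifold}. The point to keep in mind throughout is that Morita equivalence is strictly finer than a diffeomorphism of leaf spaces, so the argument must track not only orbits but also isotropy.

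First I would localize. Fix $x \in M$ and let $L$ be the leaf through $x$. Since $(M,\cl{F})$ is complete Killing, Molino theory gives that the closure $\overline{L}$ is compact, so saturated tubular neighborhoods of $\overline{L}$ furnish a neighborhood basis of the corresponding point of $M/\cl{F}$; fix such an open saturated neighborhood $U$. Choose a slice $S \cong \R^q$ transverse to $\cl{F}$ at $x$, where $q$ is the codimension; after shrinking, $S$ is a complete transversal to $(U, \cl{F}|_U)$. I would then invoke the standard fact that restriction to a complete transversal induces a Morita equivalence $\Hol(\cl{F}|_U) \sim \Hol(\cl{F}|_U)|^S_S$, where the latter is the \'etale groupoid of holonomy classes with source and target in $S$, i.e.\ a representative of the holonomy pseudogroup of $\cl{F}$ acting on $S$.

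The second move is to match this transverse groupoid with an action groupoid. The proof of Theorem \ref{thm:leaf-space-quasifold} equips $S \cong \R^q$ with a quasifold chart, producing a countable group $\Gamma \subseteq \Aff(\R^q)$ assembled from the finite holonomy group of the leaf together with the transverse flows of the structure algebra, whose action pseudogroup realizes the holonomy pseudogroup on $S$. Granting this, $\Hol(\cl{F}|_U)|^S_S$ and $\Gamma \ltimes \R^q$ have the same germs and the same isotropy at every point of $S$. Since $\Gamma$ acts by affine maps, distinct elements have distinct germs, so $\Gamma \ltimes \R^q$ is an effective \'etale groupoid coinciding with the groupoid of germs of its action pseudogroup; the same holds for $\Hol(\cl{F}|_U)|^S_S$, as holonomy classes on a transversal are by construction germinal. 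Two effective \'etale groupoids over a common unit space defining the same pseudogroup are isomorphic, giving $\Hol(\cl{F}|_U)|^S_S \cong \Gamma \ltimes \R^q$. Composing with the Morita equivalence of the previous paragraph yields $\Hol(\cl{F}|_U) \sim \Gamma \ltimes \R^q$, and letting $x$ range over $M$ covers $M$ by such $U$, establishing the claimed local Morita equivalences.

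The main obstacle is the middle step: upgrading the leaf-space diffeomorphism of Theorem \ref{thm:leaf-space-quasifold} to an identification of groupoids. The theorem records only that $\R^q/\Gamma$ is diffeomorphic to a neighborhood in $M/\cl{F}$, which captures the orbits but not the full pseudogroup, so I would need to extract from its proof that $\Gamma$ reproduces the entire holonomy pseudogroup on $S$ with the correct isotropy (the holonomy group) at each transverse point, and to reconcile $\Hol(\cl{F})$ with the germ groupoid of the pseudogroup, navigating the usual effective-versus-non-effective and holonomy-versus-germinal-holonomy subtleties.
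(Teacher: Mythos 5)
Your overall architecture (localize, pass to a complete transversal, match \'etale groupoids) is not unreasonable, but the proposal has a genuine gap at exactly the step you yourself flag as ``the main obstacle,'' and that step is the actual mathematical content of the corollary. Theorem \ref{thm:leaf-space-quasifold} asserts only a diffeomorphism of quotient diffeologies; nothing in it produces a group whose action pseudogroup ``realizes the holonomy pseudogroup on $S$'' with matching isotropy, and you supply no argument for this --- you write ``Granting this'' and proceed. The paper closes precisely this gap with dedicated machinery: Proposition \ref{prop:morita-to-pseudogroup} constructs, for any developable foliation with Galois cover, an explicit Morita functor from the pullback groupoid $p^*(\Hol(\cl{F}))$ to the germ groupoid of the deck-group action on the leaf space of the cover (this requires lifting leafwise paths, the smooth difference map $d:\hat{X}\times_p\hat{X}\to\Gamma$, a holonomy argument for well-definedness, and verification of the isotropy and normal-derivative conditions); Lemma \ref{lem:germ-determined} then upgrades the germ groupoid to the action groupoid $\Gamma_{\text{eff}}\ltimes M$ using the fact that isometries of a connected manifold are determined by their germs. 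The paper's proof of the corollary is then short: take the tubular neighbourhood $V \cong NX$ from Proposition \ref{tubular}, note $\Hol(\cl{F})|_V\cong\Hol(\cl{F}_{NX})$, and apply Corollary \ref{cor:effective-germ-determined-equivalence} to the developable Riemannian foliation $(NX,\cl{F}_{NX},\pr_2)$, with the affine model $\pi_1(X)\ltimes(H\times E_0)$ coming from the precise clause of Theorem \ref{thm:1}. Your transversal route could in principle be completed, but only by proving essentially these same statements; as written, the proposal assumes its conclusion at the crucial point.

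Two further inaccuracies are worth noting. First, Molino theory does not give that $\overline{L}$ is compact: a simple foliation of $\R^2$ by parallel lines is complete Killing with closed, non-compact leaf closures; the paper only needs that $\overline{L}$ is a closed foliated submanifold, which is what Proposition \ref{tubular} requires. Second, the group in the local model is not ``assembled from the finite holonomy group of the leaf together with the transverse flows''; it is $\pi_1(\overline{L})$ acting through deck transformations on the leaf space $E$ of the pulled-back cover, and this group need not act effectively. Your inference ``distinct elements have distinct germs'' is valid for subgroups of $\Aff(\R^q)$, but not for the natural group $\pi_1(\overline{L})$ before one passes to its image, i.e.\ to the effective quotient $\pi_1(\overline{L})_{\text{eff}}$ --- which is exactly how the paper's Corollary \ref{cor:morita-equivalence} states the result and how Lemma \ref{lem:germ-determined} (whose hypothesis is an effective action) can be applied.
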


These are Theorem \ref{thm:1} and Corollary \ref{cor:morita-equivalence} in this article, respectively. To prove these results, we show that any complete Killing foliation is locally developable, and then apply a description of leaf spaces of developable foliations worked out in Propositions \ref{prop:develop-leaf-space} and \ref{prop:morita-to-pseudogroup}. To show local develop-ability, we use Molino's structure theory for Riemannian foliations (see \cite{Mo88} and Section \ref{sec:review-riemannian-fol}), and specifically Fedida's Theorem \ref{leaf-space} for complete Lie-$\fk{g}$ foliations. We emphasize that our local models are quotients of Cartesian spaces by countable groups acting affinely, and thus we consider our result a form of “affinization”. It is distinct from the usual linearization of proper Lie group actions (or proper Lie groupoids), which are known to be locally isomorphic to the action of the isotropy groups on the normal space to an orbit. For example, in Example \ref{ex:dense-leaf}, we apply Theorem \ref{thm:1} to conclude that the leaf space of a Kronecker foliation with angle $\lambda$ is the quotient of $\R$ by the subgroup $\Z + \lambda\Z$, whereas if we attempt to linearize its holonomy groupoid like a proper groupoid, we get a trivial groupoid.

Haefliger \cite{H88} gave a model for complete pseudogroups of isometries which, when applied to the holonomy pseudogroup of a complete Killing Riemannian foliation, achieves a result of a similar spirit to our own. However, the actions in his local model are not necessarily affine, and their affinization - as supplied by our model - is not obvious. We carefully compare our result to Haefliger's in Section \ref{sec:comp-haefl-model}.

We also consider a partial converse to Theorem \ref{thm:1}. The case of orbifolds is well known. Given an (oriented) orbifold, the total space of its orthonormal frame bundle is a smooth manifold, on which the structure group acts locally freely. Furthermore, we can identify the orbifold with the orbit space of the locally free structure group action. As a result, every orbifold can be realized as the leaf space of a Riemannian foliation. We address the case of quasifolds in Section \ref{sec:diff-quas}. We show that every quasifold is a leaf space of a foliation (Corollary \ref{cor:quas-is-leaf-space}), and give a condition for that foliation to be Riemannian (Corollary \ref{cor:riemannian-quas-is-leaf-space}). We use the structural pseudogroup associated to a diffeological quasifold, and our methods differ from Battaglia and Zaffran's in \cite{BZ17}. We do not currently know the relation between our foliation and Battaglia and Zaffran's, in the case of toric symplectic quasifolds.

Our paper is structured as follows. In Section \ref{sec:quas-as-diff}, we review diffeology for unfamiliar readers, and also introduce diffeological quasifolds. In Section \ref{sec:trans-geom-structures}, we review the transverse geometry of foliations. In particular, we prove Lemma \ref{quotient-bundle}, which gives conditions for a foliated bundle's leaf space to be a manifold. In Section \ref{sec:review-riemannian-fol}, we review Molino's theory of Riemannian foliations, and we define Killing foliations. In Section \ref{sec:leaf-space-devel}, we describe the global transverse geometry of developable foliations from diffeological and Lie groupoid perspectives. In Section \ref{sec:leaf-space-killing} we prove our main result, Theorem \ref{thm:1}, and give some examples. Finally in Section \ref{sec:diff-quas}, we approach the converse direction of our main result.

\section{Quasifolds as diffeological spaces}
\label{sec:quas-as-diff}

Manifolds are spaces that are locally modelled by Cartesian space $\R^n$. Orbifolds, introduced by Satake in \cite{S56} and \cite{S57} by the name V-manifolds, are spaces locally modelled by quotients of $\R^n$ by actions of finite groups. Quasifolds, introduced by Prato in \cite{Pr99} and \cite{Pr01}, are spaces locally modelled by quotients of $\R^n$ by actions of countable groups. However, whereas the meaning of ``locally modelled'' is well established in the case of manifolds, its meaning for orbifolds and quasifolds is more subtle. Satake's formulations in \cite{S56} and \cite{S57} are different and their equivalence is non-trivial, and Prato's in \cite{Pr99} gives yet another approach. Thankfully, diffeology, first defined by Souriau \cite{S79} (though a similar notion was introduced by Chen \cite{C77}), provides a category that supports straightforward definitions of manifolds, orbifolds, and quasifolds. In this section, we introduce the necessary basics of diffeology, and diffeological orbifolds and quasifolds. Our main source for diffeology is Iglesias-Zemmour's textbook \cite{IZ13}. We refer the reader there for more detail.

A \define{diffeology} on a set $X$ is a collection of maps $\mathscr{D}$ from open subsets of Cartesian spaces into $X$, called \define{plots}, satisfying:
\begin{itemize}
\item all locally constant maps $U \to X$ are plots (for all $n \geq 0$);
\item if $p:U \to X$ is a plot, and $F:V \to U$ is smooth, then $p^*F:V \to X$ is a plot;
\item if $p:U \to X$ is a function, and is locally a plot, then $p$ is a plot.
\end{itemize}
A set equipped with a diffeology is a \define{diffeological space}.  A function $f:X \to X'$ between diffeological spaces is \define{smooth} if $p^*f$ is a plot of $X'$ whenever $p$ is a plot of $X$. A smooth function with smooth inverse is a \define{diffeomorphism}. Diffeological spaces, together with smooth maps, form a category.
\begin{ex}
  For a manifold $M$,\footnote{A Hausdorff, second-countable space equipped with a maximal smooth atlas} the collection of all smooth functions from open subsets of Cartesian spaces into $M$ is a diffeology $\mathscr{D}_M$. The assignment
  \begin{equation*}
    M \mapsto (M, \mathscr{D}_M), \quad (M \xrightarrow{f} N) \mapsto (M \xrightarrow{f} N)
  \end{equation*}
  is a fully faithful functor from the category of manifolds to that of diffeological spaces.
\end{ex}
The \define{D-topology} of a diffeological space is the finest topology for which all the plots are continuous. Smooth maps are continuous in the D-topology, so we have a faithful functor from the category of diffeological spaces to that of topological spaces.

Diffeologies pass to subsets and quotients. The \define{subset diffeology} on $A \subseteq X$ consists of the plots of $X$ with image in $A$. For an equivalence relation ${\sim}$ on $X$, the \define{quotient diffeology} on $X/{\sim}$ consists of the plots $p:U \to X/{\sim}$ that locally lift along the quotient map $\pi$. Locally lift means: for each $r \in U$, there is a neighbourhood $U'$ or $r$ and a plot $q:U' \to X$ such that $\pi \circ q = p|_{U'}$. The D-topology of the quotient diffeology is the usual quotient topology, but the D-topology of the subset diffeology is not necessarily the subspace topology.

\begin{ex}
  Suppose $(M, \cl{F})$ is a foliated manifold, and $\pi:M \to M/\cl{F}$ is the quotient map to its leaf space. We equip the leaf space with the quotient diffeology, and speak of the ``diffeological leaf space.'' If $U$ is open in $M$, then $\pi(U)$ is open in $M/\cl{F}$.
\end{ex}

We now define diffeological quasifolds. This is the definition from \cite{IZP} and \cite{KM22}.
\begin{defn}\label{def:quasifold}
  A \define{diffeological quasifold} (of dimension $n$) is a second-countable diffeological space $X$ such that: for every $x \in X$, there is a D-open neighbourhood $U$ of $x$, a countable subgroup $\Gamma$ of $\operatorname{Aff}(\R^n)$, a $\Gamma$-invariant open subset $V$ of $\R^n$, and a diffeological diffeomorphism $F:U \to V/\Gamma$. We call $F$ a \define{quasifold chart}. A collection of quasifold charts that cover $X$ is an \define{atlas} of $X$.
\end{defn}
If every group $\Gamma$ in Definition \ref{def:quasifold} is finite, we obtain a \define{diffeological orbifold}, in the sense of \cite{IZKZ10}. If every group $\Gamma$ is trivial, we obtain a not-necessarily-Hausdorff manifold. 
\begin{rem}
  The fact diffeological orbifolds are not necessarily Hausdorff distinguishes them from other orbifolds in the literature, including Satake's original V-manifolds. See \cite{IZKZ10} for a thorough comparison of existing definitions. Our models for diffeological quasifolds also differ from those used by Prato in \cite{Pr99}. However, Prato introduced quasifolds in order to extend Delzant's theorem from symplectic geometry to the case of non-rational simple polytopes. The quasifolds that appear in her extension all satisfy Definition \ref{def:quasifold}.  
\end{rem}

\begin{ex}\
  \begin{itemize}
  \item Suppose $(M, \cl{F})$ is a foliated manifold and every leaf of $\cl{F}$ is compact with finite holonomy group (see Section \ref{sec:struct-pseud-quas}). Then the diffeological leaf space $M/\cl{F}$ is a diffeological orbifold. This is a consequence of the Reeb stability theorem for foliations, see \cite[Theorem 2.15]{MM03}.

  \item Irrational tori are important examples of diffeological quasifolds. Take $\lambda$ irrational, and consider the action of the group $\Z + \lambda \Z$ on $\R$ by addition. The quotient $\R/(\Z + \lambda \Z)$ is a diffeological quasifold, which we call an irrational torus. This quasifold also arises the leaf space of the foliation on the torus $T^2 = \R^2/\Z^2$ induced by the foliation on $\R^2$ by lines of slope $\lambda$. For a proof, see \cite[Exercise 31]{IZ13}, but we also derive this fact in Example \ref{ex:dense-leaf}, as a consequence of our Theorem \ref{thm:1}.
  \end{itemize}
\end{ex}

Associated to every quasifold is a countably-generated pseudogroup encoding its structure. The language of pseudogroups will be useful throughout this article, so we review it here. A \define{transition} between manifolds (or diffeological spaces) $M$ and $M'$ is a diffeomorphism from an open subset of $M$ onto an open subset of $M'$. We may denote a transition by $\psi:M \dashrightarrow M'$. A \define{pseudogroup} on $M$ is a collection $\Psi$ of transitions of $M$ such that:
\begin{itemize}
\item it contains the identity;
\item it is closed under composition, inversion, and restriction to open subsets;
\item if $f$ is a transition on $M$, and is locally in $\Psi$, then $f$ is in $\Psi$.
\end{itemize}
We say $\Psi$ is \define{countably-generated} when there is a countable collection of transitions $\{\psi_i\}_{i=1}^\infty$ such that for every $\psi \in \Psi$ and $x \in \dom \psi$, we have $\operatorname{germ}_x\psi = \operatorname{germ}_x \psi_i$ for some $i$. This is equivalent to requiring that $\Psi$ is the smallest pseudogroup containing the $\psi_i$.

\subsection{The structural pseudogroup of a quasifold}
\label{sec:struct-pseud-quas}

In this subsection, we associate a pseudogroup to a quasifold. We will use this pseudogroup in Section \ref{sec:diff-quas}, and the reader may skip it on first reading. We rely on the results in \cite{KM22}, cf.\ \cite{IZP}.
\begin{lem}
  Let $\Gamma$ be a countable subgroup of $\operatorname{Aff}(\R^n)$, and $V$ be a $\Gamma$-invariant open subset of $\R^n$. Suppose $\psi$ is a transition of $V$ preserving $\Gamma$-orbits. Then for each $x \in \dom \psi$, there is some $\gamma \in \Gamma$ such that $\operatorname{germ}_x\psi = \operatorname{germ}_x \gamma$. 
\end{lem}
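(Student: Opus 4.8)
The plan is to localize the statement and reduce it to a rigidity property of the sets on which $\psi$ agrees with a fixed element of $\Gamma$. For each $\gamma \in \Gamma$, set $A_\gamma = \{y \in \dom \psi : \psi(y) = \gamma(y)\}$. Since $\psi$ and $\gamma$ are continuous, $A_\gamma$ is closed in $\dom \psi$; and because $\psi$ preserves $\Gamma$-orbits, $\dom \psi = \bigcup_{\gamma \in \Gamma} A_\gamma$. As $\Gamma$ is countable, this is a countable union of closed sets in the Baire space $\dom \psi$, so the open set $G = \bigcup_\gamma U_\gamma$, where $U_\gamma := \operatorname{int} A_\gamma$, is dense. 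The decisive elementary fact is that two distinct affine maps cannot agree on a nonempty open subset of $\R^n$; hence the $U_\gamma$ are pairwise disjoint, and on each $U_\gamma$ we have $\psi = \gamma$ identically, so $\germ_y \psi = \germ_y \gamma$ for every $y \in U_\gamma$. Thus the lemma holds on the dense open set $G$, and it remains to see that $G = \dom \psi$.

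First I would record a first-order rigidity statement: the closures $\overline{U_\gamma}$ are pairwise disjoint. Indeed, if $w \in \overline{U_\gamma} \cap \overline{U_{\gamma'}}$, then passing to the limit from each side and using continuity of both $\psi$ and $D\psi$ gives $\gamma(w) = \psi(w) = \gamma'(w)$ together with $L_\gamma = D\psi(w) = L_{\gamma'}$, where $L_\gamma$ denotes the linear part; two affine maps with equal linear part that agree at one point are equal, so $\gamma = \gamma'$. In particular $\psi = \gamma$ on all of $\overline{U_\gamma}$ by continuity, whence $\overline{U_\gamma} \subseteq A_\gamma$ and $\operatorname{int} \overline{U_\gamma} = U_\gamma$.

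Using this, I would fix $x \in \dom \psi$, restrict to a connected neighbourhood $W$, and argue that $U_\gamma \cap W$ is closed in $W$ for the relevant $\gamma$, so that, being also open and nonempty on the dense set, it exhausts $W$ by connectedness, giving $\psi = \gamma$ near $x$. The pairwise disjointness of closures already shows that a point $x \in \overline{U_\gamma}$ is approached by good points of no other piece $U_{\gamma'}$; so the only way $x$ can fail to lie in $U_\gamma$ is if infinitely many distinct pieces $U_{\gamma_k}$ accumulate at $x$, necessarily with $\gamma_k \to \Phi(x)$ in $\operatorname{Aff}(\R^n)$, where $\Phi(y)$ is the affine first-order approximation $z \mapsto D\psi(y)(z - y) + \psi(y)$ of $\psi$ at $y$ (note $\Phi$ is continuous and $\Phi|_{U_\gamma} = \gamma$).

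The main obstacle is exactly this accumulation, and it is where the countability of $\Gamma$ must be used in an essential, not merely Baire-theoretic, way: because $\Gamma$ need not be discrete in $\operatorname{Aff}(\R^n)$, for instance a dense group of translations, continuity and connectedness alone do not forbid a continuous map from switching among countably many affine germs across a nowhere-dense frontier, in the manner of a Cantor function. To close this gap I would exploit that each orbit $\Gamma \cdot y$ is a countable, hence totally disconnected, subset of $\R^n$: the orbit-preservation condition $\psi(y) \in \Gamma \cdot y$ holds at every point, not only on $G$, and I would leverage this to show that the locally constant assignment $y \mapsto \gamma$ on $G$ cannot genuinely oscillate at $x$, forcing $\Phi$ to be locally constant and hence $\psi$ to agree with a single element of $\Gamma$ near $x$. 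Making this last step precise, ruling out the infinite crowding of distinct affine pieces at a frontier point, is the crux of the argument.
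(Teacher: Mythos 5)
Your opening moves are correct, and they are in the same spirit as the proof the paper points to (the paper does not reprove this lemma; it cites \cite[Corollary 2.15]{KM22}, which is indeed a Baire-category argument): the sets $A_\gamma = \{y \in \dom\psi : \psi(y) = \gamma(y)\}$ are closed and cover $\dom\psi$ by orbit preservation, so Baire gives that $G = \bigcup_\gamma \operatorname{int} A_\gamma$ is open and dense, and on $G$ the conclusion holds; the first-order rigidity observation (disjointness of the closures $\overline{U_\gamma}$) is also fine. However, the lemma asserts $\germ_x\psi = \germ_x\gamma$ for \emph{every} $x \in \dom\psi$, which is exactly the statement $G = \dom\psi$, and this is the step you explicitly leave open (``the crux''). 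Density is strictly weaker, so as written the proposal is not a proof of the lemma; this is a genuine gap, not a routine verification.

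That said, your diagnosis of why the gap is hard is misdirected, and the fix is short. The Cantor-function scenario you worry about cannot occur here, because $\psi$ is a transition, hence smooth (even $C^2$ would suffice): on $G$ the map $\psi$ is locally affine, so $D^2\psi$ vanishes on $G$; since $D^2\psi$ is continuous and $G$ is dense, $D^2\psi \equiv 0$ on all of $\dom\psi$. Hence $D\psi$ is locally constant, so $\psi$ is affine on each connected component $W$ of $\dom\psi$. Each such $W$ meets $G$, so the affine map $\psi|_W$ agrees with some $\gamma \in \Gamma$ on a nonempty open subset of $W$; two affine maps that agree on a nonempty open set are equal, so $\psi|_W = \gamma|_W$. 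This gives $\germ_x\psi = \germ_x\gamma$ at every $x$ (indeed with a single $\gamma$ per component), completing the argument. By contrast, the route you sketch via total disconnectedness of orbits is problematic as stated: the assignment $y \mapsto \gamma$ is only defined on $G$, and at a point of $\dom\psi \setminus G$ the $1$-jet $\Phi(x)$ is a priori only known to lie in the topological closure $\overline{\Gamma} \subseteq \operatorname{Aff}(\R^n)$, which (as your own example of a dense translation group shows) need not be countable or totally disconnected, so continuity into a countable set is not available there.
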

This is proved in \cite[Corollary 2.15]{KM22}, using the Baire category theorem.
\begin{cor}\label{cor:lifting}
  Take $\Gamma$ and $V$ as in the previous lemma, and similarly take $\Gamma'$ and $V'$. Fix a transition $F:V/\Gamma \dashrightarrow V'/\Gamma'$. For every $x \in V$ with $\pi(x) \in \dom F$, there is some transition $\psi: V \dashrightarrow V'$ defined near $x$ lifting $F$. Conversely, every local lift of $F$ is a local diffeomorphism. If $\psi$ and $\psi'$ are two transitions lifting $F$, and $x \in \dom \psi \cap \dom \psi'$, there is some $\gamma' \in \Gamma'$ such that $\operatorname{germ}_x\psi = \operatorname{germ}_x \gamma' \circ \psi'$.
\end{cor}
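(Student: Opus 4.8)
The plan is to build smooth lifts using only the quotient diffeology, then upgrade them to transitions. Write $\pi : V \to V/\Gamma$ and $\pi' : V' \to V'/\Gamma'$ for the quotient maps. Since $\id_V$ is a plot of $V$ and $\pi$ is smooth, $\pi$ is itself a plot of $V/\Gamma$; as $F$ is smooth, $F \circ \pi$ is a plot of $V'/\Gamma'$ on the open set $\pi^{-1}(\dom F)$, which contains $x$ because $\pi(x) \in \dom F$. By the definition of the quotient diffeology, $F \circ \pi$ lifts locally along $\pi'$: there is a connected neighbourhood $U$ of $x$ and a smooth map $\psi : U \to V'$ with $\pi' \circ \psi = F \circ \pi$ on $U$. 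This is the desired local lift, once we know it is a local diffeomorphism. Applying the same construction to $F^{-1}$ produces a smooth lift $\chi$ of $F^{-1}$ defined near $\psi(x)$, with $\pi \circ \chi = F^{-1} \circ \pi'$.

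Next I would show that \emph{any} smooth local lift $\psi$ of $F$ is a local diffeomorphism, which both completes the existence claim and proves the converse statement. Shrinking domains so the composite is defined, the map $g = \chi \circ \psi$ is smooth and satisfies $\pi \circ g = F^{-1} \circ F \circ \pi = \pi$, so $g$ preserves $\Gamma$-orbits; likewise $\psi \circ \chi$ preserves $\Gamma'$-orbits. The crux is to promote such a smooth orbit-preserving map to a single affine map. This is precisely the Baire-category argument underlying the preceding Lemma, now run for a general smooth map rather than a transition: on the connected neighbourhood $U$ the closed sets $A_\gamma = \{ y : g(y) = \gamma y \}$, for $\gamma \in \Gamma$, cover $U$, so by the Baire category theorem $\bigcup_\gamma \operatorname{int} A_\gamma$ is dense; on this dense open set $g$ agrees locally with affine maps, whence $D^2 g \equiv 0$ there and, by continuity, on all of $U$. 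Thus $g$ is affine, and matching it on the (nonempty) open set where it equals some $\gamma_0 \in \Gamma$ gives $g = \gamma_0$ on $U$, so $g$ is a diffeomorphism. The same reasoning gives $\psi \circ \chi = \gamma_0' \in \Gamma'$. Differentiating at $x$, the linear parts $L_{\gamma_0} = D\chi \, D\psi$ and $L_{\gamma_0'} = D\psi \, D\chi$ are both invertible, which forces $n = n'$ and makes $D\psi(x)$ an isomorphism; the inverse function theorem then shows $\psi$ is a local diffeomorphism near $x$, so after shrinking it is a transition lifting $F$.

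Finally, to compare two lifts $\psi, \psi'$ at $x$, I would restrict $\psi$ to a neighbourhood on which it is a diffeomorphism onto an open subset of $V'$ and form $T = \psi' \circ \psi^{-1}$, a transition of $V'$ defined near $\psi(x)$. Since $\pi' \circ \psi = F \circ \pi = \pi' \circ \psi'$, the transition $T$ preserves $\Gamma'$-orbits, so the preceding Lemma applies \emph{directly} (it is now genuinely a transition): there is $\gamma' \in \Gamma'$ with $\germ_{\psi(x)} T = \germ_{\psi(x)} \gamma'$. Unwinding $T = \psi' \circ \psi^{-1}$ gives $\psi' = \gamma' \circ \psi$ near $x$, hence $\germ_x \psi = \germ_x (\gamma')^{-1} \circ \psi'$ with $(\gamma')^{-1} \in \Gamma'$, which is the stated form. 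I expect the only real obstacle to be the affineness step in the middle paragraph; once smooth orbit-preserving maps are known to be locally elements of the group, the remaining existence, converse, and uniqueness assertions are formal.
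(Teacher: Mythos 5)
Your proposal is correct in substance, so let me place it against the paper's proof. The paper's proof of the first two claims is a citation: existence of local lifts, and the fact that every local lift is a local diffeomorphism, are quoted from \cite[Lemma 2.16]{KM22}; only the last claim is argued, by applying the preceding Lemma to the orbit-preserving transition $\psi \circ (\psi')^{-1}$ of $V'$. Your treatment of the last claim is exactly that argument (with the roles of $\psi$ and $\psi'$ swapped, which is harmless). For the first two claims you reconstruct the cited content: existence follows, as you say, from the definition of the quotient diffeology applied to the plot $F \circ \pi$; for the converse you prove a \emph{strengthening} of the preceding Lemma --- from transitions to arbitrary smooth orbit-preserving maps, with a global conclusion on connected domains --- by the same Baire-category argument together with continuity of $D^2 g$ and connectedness of $U$. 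This strengthening is genuinely needed in your route, since $g = \chi \circ \psi$ is a priori only smooth, not a transition, so the Lemma as stated cannot be applied to it; recognizing that the Baire argument survives this relaxation is the real content your proof adds, and that step is correct.

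One wrinkle in your middle paragraph needs fixing. You invoke ``the same reasoning'' for $\psi \circ \chi$, but this composite need not be defined near $\psi(x)$: one only knows $\pi(\chi(\psi(x))) = F^{-1}(F(\pi(x))) = \pi(x)$, i.e.\ $\chi(\psi(x)) = \gamma_1 \cdot x$ for some $\gamma_1 \in \Gamma$, and $\gamma_1 \cdot x$ may lie outside $\dom \psi$. The fix is one line: replace $\chi$ by $\gamma_1^{-1} \circ \chi$, which is still a smooth lift of $F^{-1}$ because $\pi \circ \gamma_1^{-1} = \pi$, and which now sends $\psi(x)$ to $x$; after shrinking domains both composites are defined, and your differentiation argument ($D\chi\, D\psi$ and $D\psi\, D\chi$ both invertible, forcing $n = n'$ and making $D\psi(x)$ an isomorphism, whence the inverse function theorem applies) goes through verbatim.
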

\begin{proof}
  The first claim is \cite[Lemma 2.16]{KM22}. The second claim is implicitly proved in that same lemma. To prove the last claim, observe that $\psi \circ (\psi')^{-1}$ is a transition of $V'$ preserving $\Gamma'$-orbits, so $\operatorname{germ}_{\psi'(x)} \psi \circ (\psi')^{-1} = \operatorname{germ}_{\psi'(x)} \gamma'$ for some $\gamma' \in \Gamma'$, as required.
\end{proof}
We now describe the pseudogroup associated to a quasifold, as done in \cite[Section 4]{KM22}.
\begin{prop}\label{prop:quastopseudo}
  Fix a quasifold $X$. Associated to each atlas $\mathcal{A}$ is a pseudogroup $\Psi$ of a manifold $V$ such that $V/\Psi \cong X$. If $\mathcal{A}$ is countable, then $\Psi$ is countably-generated.
\end{prop}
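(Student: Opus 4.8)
The plan is to realize $X$ as $V/\Psi$ by taking $V$ to be the disjoint union of the chart domains, and by letting $\Psi$ be the smallest pseudogroup containing the deck transformations of the individual charts together with all local lifts of the chart transition maps. Write the atlas as $\mathcal{A} = \{F_i : U_i \to V_i/\Gamma_i\}_{i \in I}$, and set $V = \bigsqcup_{i \in I} V_i$, which is a manifold. I would define $P : V \to X$ by $P|_{V_i} = F_i^{-1} \circ \pi_i$, where $\pi_i : V_i \to V_i/\Gamma_i$ is the quotient map; since the $U_i$ cover $X$ and each $F_i$ is a diffeomorphism onto $U_i$, the map $P$ is a smooth surjection (smoothness holds because $V$ is a coproduct and each $P|_{V_i}$ is smooth). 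Then I take $\Psi$ to be the smallest pseudogroup on $V$ containing, first, every $\gamma \in \Gamma_i$ viewed as a transition of $V_i \subseteq V$, and second, for each pair $(i,j)$, every local lift $\psi : V_i \dashrightarrow V_j$ of the transition $F_j \circ F_i^{-1} : V_i/\Gamma_i \dashrightarrow V_j/\Gamma_j$; such lifts exist by Corollary \ref{cor:lifting}.

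First I would verify that $P$ is constant on $\Psi$-orbits: the generators $\gamma \in \Gamma_i$ preserve the fibres of $\pi_i$, and the lifts of $F_j \circ F_i^{-1}$ satisfy $P \circ \psi = P$ by construction, so $P$ descends to a map $\bar P : V/\Psi \to X$. Surjectivity of $\bar P$ is inherited from $P$. For injectivity, suppose $P(v) = P(w)$ with $v \in V_i$ and $w \in V_j$; then $\pi_j(w) = (F_j \circ F_i^{-1})(\pi_i(v))$, so applying a type-(ii) lift and then a suitable deck transformation of $\Gamma_j$ carries $v$ to $w$, giving $v \sim_\Psi w$. To upgrade $\bar P$ to a diffeomorphism, I would use that $V \to V/\Psi$ is a subduction to transfer smoothness of $P$ to $\bar P$, and conversely lift an arbitrary plot $q : O \to X$ locally: near a point whose image lies in the D-open set $U_i$, the composite $F_i \circ q$ is a plot of $V_i/\Gamma_i$, hence lifts to a plot of $V_i$ by the definition of the quotient diffeology, and this lift is a local lift of $\bar P^{-1} \circ q$ along $V \to V/\Psi$; thus $\bar P^{-1}$ is smooth and $V/\Psi \cong X$.

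The main obstacle is the countable-generation claim, because when $I$ is countable the deck transformations $\bigcup_i \Gamma_i$ form a countable family but the lifts of type (ii) are a priori uncountable. Here I would cover each $V_i$ (which is second countable) by countably many open sets on each of which a lift of $F_j \circ F_i^{-1}$ is defined, and select one lift per such open set and per pair $(i,j)$, producing countably many lifts in total. The decisive input is the final assertion of Corollary \ref{cor:lifting}: any two lifts of the same transition differ locally by an element of $\Gamma_j$. Consequently every lift agrees, near each point of its domain, with a composite of a chosen lift and a deck transformation, so the smallest pseudogroup containing the deck transformations and the chosen countable family of lifts already contains all generators of type (ii), and hence equals $\Psi$. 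This exhibits $\Psi$ as the smallest pseudogroup containing a countable collection of transitions, which is precisely what it means for $\Psi$ to be countably-generated.
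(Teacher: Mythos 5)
Your proposal is correct and follows essentially the same route as the paper: the same disjoint union $V = \bigsqcup V_i$, the same pseudogroup $\Psi$ (your generating set of deck transformations and local lifts generates exactly the paper's pseudogroup of chart-compatible transitions), and the same countable-generation argument via Corollary \ref{cor:lifting}, covering the relevant open subset $\pi_i^{-1}(F_i(U_i \cap U_j))$ of each $V_i$ by domains of countably many chosen lifts and using that any lift locally agrees with $\gamma \circ \psi_k$. The only difference is that you prove the diffeomorphism $V/\Psi \cong X$ directly with a diffeological argument (descent along the subduction, plus local lifting of plots), whereas the paper cites this step from \cite[Proposition 4.5]{KM22}; your inline argument is sound.
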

\begin{proof}
  Choose a countable atlas $\{F_i:U_i \to V_i/\Gamma_i\}$ of $X$. If we have an uncountable atlas, the argument proceeds identically, but we will not conclude $\Psi$ is countably-generated. Let $V := \bigsqcup V_i$, and let $\Psi$ be the pseudogroup consisting of all transitions $\psi:V_i \dashrightarrow V_j$ such that
  \begin{equation}\label{eq:definepsi}
    \begin{tikzcd}
      V_i \ar[rr, "\psi", dashed] \ar[d] & & V_j \ar[d] \\
      V_i/\Gamma_i \ar[r, "F_i"]&  X  &\ar[l, "F_j"] V_j/\Gamma_j.
    \end{tikzcd}
  \end{equation}
  In \cite[Proposition 4.5]{KM22}, we show that the quotient map $\bigsqcup V_i \to \bigsqcup V_i/\Gamma_i$ descends to a diffeomorphism $M/\Psi \to X$, giving the diagram
  \begin{equation*}
    \begin{tikzcd}
      \bigsqcup V_i \ar[d] \ar[r] & \bigsqcup V_i/\Gamma_i \ar[d] \\
      V/\Psi \ar[r, "\cong"] & X,
    \end{tikzcd}
  \end{equation*}
  where the right downward arrow is induced by the $F_i$. To show $\Psi$ is countably-generated, for each $i,j$ choose a countable collection $\{\psi_k\}_{k=1}^\infty$ so that each $\psi_k$ completes \eqref{eq:definepsi}, and the domains of the $\psi_k$ cover $\pi_i^{-1}(F_i^{-1}(U_j)) \subseteq V_i$. This is possible by Corollary \ref{cor:lifting}. Now, given some $\psi$ as in \eqref{eq:definepsi}, and $x \in \dom \psi$, take some $\psi_k$ whose domain also contains $x$. Then by Corollary \ref{cor:lifting}, there is some $\gamma \in \Gamma_j$ so that $\operatorname{germ}_x \psi = \operatorname{germ}_x (\gamma \circ \psi_k)$. It follows that that $\{\gamma \circ \psi_k \mid \gamma \in \Gamma_j\}_{k=1}^\infty$ generates all transitions of $\Psi$ from $V_i$ to $V_j$. Repeating this for all pairs $i,j$, we grow a countable generating set for $\Psi$.
\end{proof}

\section{Transverse geometric structures on foliations}
\label{sec:trans-geom-structures}
This section reviews the transverse geometry of foliations. Our main sources are \cite{MM03} and \cite{Mo88}. Let $M$ be a manifold, equipped with a regular foliation $\cl{F}$. This may be viewed as:
\begin{itemize}
\item An involutive subbundle of the tangent bundle. The maximal integral submanifolds are leaves of $\cl{F}$.
\item An atlas of charts $\phi:U \to \R^p \times \R^q$ whose change-of-charts diffeomorphisms are locally of the form $(g(x,y), h(y))$. The connected components of $\phi^{-1}(\R^p \times \{0\})$, called \define{plaques}, assemble into the leaves of $\cl{F}$.
\item A collection of submersions $s:U \to \R^q$, such that for any $s_i,s_j$ in the collection, there is a diffeomorphism (writing $U_{ij} := U_i \cap U_j)$
  \begin{equation*}
    h_{ij}:s_j(U_{ij}) \to s_i(U_{ij})
  \end{equation*}
  such that $h_{ij} \circ s_j = s_i$. The components of the fibers of the $s$ are the plaques. This is a \define{Haefliger cocycle} for $\cl{F}$. 
\end{itemize}
Throughout this note we denote by $\mathfrak{X}(\mathcal{F})\subset\mathfrak{X}(M)$ the subspace of vector fields tangent to the leaves of $\mathcal{F}$. We say a smooth function $f$ on $M$ is a \define{basic function} if
\begin{equation*}
  \mathcal{L}_Xf=0 \text{ for all } X\in \mathfrak{X}(\mathcal{F}).
\end{equation*}
Equivalently, $f$ is basic if it is constant along the leaves of $\cl{F}$. We will denote the set of basic functions on $(M, \mathcal{F})$ by $\Omega^0(M, \mathcal{F})$. We say that a vector field $X\in\mathfrak{X}(M)$ is \define{foliate} if
\begin{equation*}
  \cl{L}_X Y =[X,Y]\in \mathfrak{X}(\mathcal{F}) \text{ for all } Y\in \mathfrak{X}(\mathcal{F}). 
\end{equation*}
The flow of a foliate vector field preserves (but not necessarily fixes) the leaves of $\cl{F}$. We will denote by $\mathfrak{R}(\mathcal{F})$ the space of foliate vector fields.  Clearly $\fk{X}(\cl{F})$ forms an ideal in the Lie algebra of foliate vector fields. We define a \define{transverse vector field} to be the equivalence class of a foliate vector field in the quotient $\mathfrak{R}(\mathcal{F})/\mathfrak{X}(\mathcal{F})$, which we denote by $\mathfrak{X}(M, \mathcal{F})$. The space $\mathfrak{X}(M, \mathcal{F})$ of transverse vector fields forms a Lie algebra, with Lie bracket inherited from the natural one on $\mathfrak{R}(\mathcal{F})$.

\begin{defn}
  A \define{transverse action} of a Lie algebra $\mathfrak{g}$ on a foliated manifold $(M,\mathcal{F})$ is a Lie algebra homomorphism
  \begin{equation}\label{t-action}
    a: \mathfrak{g}\rightarrow\mathfrak{X}(M, \mathcal{F}).\
  \end{equation}
  We say the transverse Lie algebra action (\ref{t-action}) is \define{complete}, if for every $\xi\in \mathfrak{g}$, $a(\xi)$ is represented by a complete foliated vector field.
\end{defn}

\begin{ex}\label{ex:homogeneous-foliations}
  Take a compact Riemannian manifold $(M, g)$. The isometry group $\Iso(M)$ is a Lie group \cite{MS39}. Let $H$ be a connected Lie subgroup of $\Iso(M)$, whose isotropy groups are all of the same dimension. Then the orbits of the $H$ action on $M$ all have the same dimension, and thus form a foliation $\cl{F}$ of $M$. Let $K := \ol{H}$ be the closure of $H$ in $\Iso(M)$. The orbits of the $K$ action are the closures of the leaves of $\cl{F}$. The Lie group $H$ is normal in $K$, thus we have the quotient Lie algebra $\fk{g} := \Lie(K)/\Lie(H)$. The map
  \begin{equation*}
    \Lie(K) \to \fk{R}(\cl{F}), \quad \eta \mapsto \eta_M,
  \end{equation*}
  where $\eta_M$ is the fundamental vector field associated to $\eta$, descends to a transverse action of $\fk{g}$ on $(M, \cl{F})$. We will occasionally return to this example.
\end{ex}

\begin{defn} Consider a transverse Lie algebra action $a: \mathfrak{g}\rightarrow \mathfrak{X}(M, \mathcal{F})$. The \define{stabilizer} of $x\in M$ is defined to be
  \begin{equation}\label{stabilizaer} \text{stab}(\mathfrak{g}\ltimes \mathcal{F}, x) := \{\xi\in \mathfrak{g}\,\vert\, a(\xi)_x=0\}.
  \end{equation}
\end{defn}
We use the notation $\fk{g} \ltimes \cl{F}$ because the transitive Lie algebra action of $\fk{g}$ induces the \emph{transverse action Lie algebroid} over $M$, as defined in \cite{LS20}, which is denoted there by $\fk{g} \ltimes \cl{F}$.

\begin{lem}[{\cite[Lemma 2.2.7]{LS20}}] \label{leaf-wise-stabilizer}
  Suppose $x,y \in M$ are in the same leaf of $\mathcal{F}$, and we have a transverse action of $\fk{g}$ on $\cl{F}$. Then $\text{stab}(\mathfrak{g} \ltimes \mathcal{F}, x) = \text{stab}(\mathfrak{g} \ltimes \mathcal{F}, y)$.
\end{lem}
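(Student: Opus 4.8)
The plan is to reduce the statement to a single transverse vector field and show that its vanishing locus is a union of leaves. Fix $\xi \in \fk{g}$ and choose a foliate representative $X \in \fk{R}(\cl{F})$ of $a(\xi)$. By definition, $\xi \in \stab(\fk{g} \ltimes \cl{F}, x)$ means $a(\xi)_x = 0$ in the normal bundle $\nu := TM/T\cl{F}$; writing $\pi \colon TM \to \nu$ for the projection, this says $\pi(X_x) = 0$, equivalently $X_x \in T_x\cl{F}$. The condition is independent of the chosen representative, since any two representatives of $a(\xi)$ differ by an element of $\fk{X}(\cl{F})$, which lies in $T\cl{F}$ pointwise. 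Hence it suffices to prove: if $X$ is foliate and $\pi(X_x) = 0$, then $\pi(X_y) = 0$ for every $y$ in the leaf through $x$.

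The key observation is that the section $\bar X := \pi \circ X$ of $\nu$ induced by a foliate field is constant along plaques. To see this, I would work in a foliation chart $\phi \colon U \to \R^p \times \R^q$ with coordinates $(x^1, \dots, x^p, y^1, \dots, y^q)$, in which the plaques are the slices $\{y = \mathrm{const}\}$ and $T\cl{F}$ is spanned by the $\partial_{x^i}$. Writing $X = \sum_i a^i \partial_{x^i} + \sum_j b^j \partial_{y^j}$, the foliate condition $[\partial_{x^k}, X] \in \fk{X}(\cl{F})$ forces $\partial_{x^k} b^j = 0$ for all $j,k$, so the transverse components $b^j$ depend only on the transverse coordinates $y$. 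Since $\bar X$ is represented in the chart by $\sum_j b^j(y)\,\pi(\partial_{y^j})$, it is constant along each plaque; in particular, if $\bar X$ vanishes at one point of a plaque it vanishes on the whole plaque. (Conceptually, this is the statement that $\bar X$ is parallel for the Bott connection along the leaves, and the chart computation is exactly that parallelism.)

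To finish, I would globalize along the connected leaf $L$ through $x$. Set $Z := \{z \in L : \bar X_z = 0\}$. The previous paragraph shows $Z$ is open in $L$: if $\bar X$ vanishes at $z$, it vanishes on the entire plaque through $z$, which is an open neighbourhood of $z$ in $L$. By the same argument applied to the nonvanishing locus, the complement $L \setminus Z$ is open as well. Since $L$ is connected and $x \in Z$, we conclude $Z = L$, so $\bar X_y = 0$ and $\xi \in \stab(\fk{g} \ltimes \cl{F}, y)$. This gives $\stab(\fk{g} \ltimes \cl{F}, x) \subseteq \stab(\fk{g} \ltimes \cl{F}, y)$, and the reverse inclusion follows by symmetry, yielding equality.

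There is no deep obstacle here; the argument is local and elementary once the right viewpoint is adopted. The only points demanding care are the well-definedness of the pointwise condition $a(\xi)_x = 0$ at the level of the quotient $\fk{R}(\cl{F})/\fk{X}(\cl{F})$, and the verification that the foliate condition forces the transverse components of $X$ to be independent of the leaf directions. This last fact is precisely what makes $\bar X$ constant along plaques and drives the whole proof, so it is the step I would state most carefully.
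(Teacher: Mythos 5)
Your proof is correct. Note that the paper does not actually prove this lemma --- it is quoted verbatim from \cite[Lemma 2.2.7]{LS20} --- so there is no internal proof to compare against; your argument (foliateness forces the transverse components $b^j$ to depend only on the transverse coordinates, hence the induced normal section is constant on plaques, and an open--closed argument on the connected leaf finishes it) is the standard one, and is essentially the Bott-connection parallelism argument used in the cited reference. The two points you flag as needing care --- well-definedness of $a(\xi)_x$ in $\fk{R}(\cl{F})/\fk{X}(\cl{F})$, and the chart computation $\partial_{x^k}b^j = 0$ --- are indeed the only substantive steps, and both are handled correctly.
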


We will often restrict transverse actions to foliated submanifolds.
\begin{defn} \label{saturated}
  Let $(M, \mathcal{F})$ be a foliated manifold. We say that a subset $A\subset M$ is $\mathcal{F}$-\define{saturated} if any leaf of $M$ is either disjoint from $A$, or is contained in $A$. We call an $\cl{F}$-saturated embedded submanifold $A$ of $M$ a \define{foliated submanifold}.
\end{defn}

\begin{defn} \label{invariant}
  Suppose we have a transverse action $a: \mathfrak{g}\rightarrow \mathfrak{X}(M, \mathcal{F})$ on a foliated manifold $(M, \mathcal{F})$, and that $X$ is a foliated submanifold. We say that $X$ is \define{invariant} under the transverse action of $\mathfrak{g}$ if, for every $\xi\in \mathfrak{g}$, every foliated vector field $\xi_M$ representing $a(\xi)$ has the following property:
  \begin{center}
    If $\xi_M$ is tangent to $X$ at any point of $X$, then $\xi_M$ is tangent to $X$ at every point of $X$.
  \end{center}
\end{defn}

Let $(M, \mathcal{F})$ and $(M', \mathcal{F}')$ be two foliated manifolds, and let $f: M\rightarrow M'$ be a foliated map, meaning $f$ sends leaves of $\cl{F}$ to leaves of $\cl{F}'$. By assumption, for each $X_p \in T_p\cl{F}$, the pushforward $f_*(X_p)$ is in $T_{f(p)}\cl{F}'$. Thus $f$ naturally induces a normal derivative $(Nf)_p: N_p\mathcal{F}\rightarrow N_{f(p)}\mathcal{F}'$. Let $\zeta$ and $\zeta'$ be two transverse vector fields on $M$ and $M'$ respectively. We say that $\zeta$ and $\zeta'$ are $f$-\define{related}, if $\zeta'_{f(p)}=(Nf)_p(\zeta_p)$ for all $p\in M$. 

\begin{defn}
  Consider transverse Lie algebra actions $a: \mathfrak{g}\rightarrow \mathfrak{X}(M, \mathcal{F})$ and $a': \mathfrak{g}\rightarrow \mathfrak{X}(M', \mathcal{F}')$, of the same Lie algebra $\fk{g}$. We say a foliate map $f: M\rightarrow M'$ is $\mathfrak{g}$-\define{equivariant}, if for every $\xi \in \mathfrak{g}$, $a(\xi)$ and $a'(\xi)$ are $f$-related.
\end{defn}

A useful example to keep in mind is that of simple foliations.
\begin{defn} We say that a foliation $\mathcal{F}$ on $M$ is \define{simple} if the space of leaves $\overline{M} = M/\cl{F}$ carries a smooth manifold structure, such that the quotient map $\pi: M\rightarrow M/\mathcal{F}$ is a submersion with connected fibers.  Given an arbitrary foliation, we say an open subset $U\subset M$ is simple if the restricted foliation $\mathcal{F}\vert_U$ is simple.
\end{defn}
\begin{ex}
  A transverse Lie algebra action of $\fk{g}$ on a simple foliation $(M, \cl{F})$ is equivalent to a Lie algebra action of $\fk{g}$ on the leaf space $M/\cl{F}$. If $(M', \cl{F}')$ is another simple foliation that $\fk{g}$ acts on transversely, a foliate map $f:M \to M'$ is $\fk{g}$-equivariant if it descends to a $\fk{g}$-equivariant map $M/\cl{F} \to M'/\cl{F}'$.
\end{ex}

Next, we review the definition of a holonomy diffeomorphism, holonomy group, and holonomy pseudogroup from foliation theory, cf.\ \cite{Mo88}. Fix a foliated atlas for $(M, \cl{F})$: this is the maximal atlas of $M$ whose charts are of the form $\varphi_\alpha: U_\alpha \to \varphi_\alpha(U_\alpha) \subseteq \R^p \times \R^q$, such that the change of coordinates $\varphi_{\beta\alpha}$ take the form $\varphi_{\beta\alpha}(x,y) = (g_{\beta\alpha}(x,y), h_{\beta\alpha}(y))$, and the connected components of the $\varphi_\alpha^{-1}(\R^p \times \{y\})$ (the plaques) are the leaves of $\cl{F}|_{U_\alpha}$. Now let $L$ be a leaf of $\cl{F}$, and $\sigma: [0, 1]\rightarrow L$ be a continuous path joining $x$ and $y$. Choose $S$ and $S'$ two transversals of the foliation passing through $x$ and $y$, respectively. The \define{holonomy} of $\sigma$ is the germ of a diffeomorphism $h_{\sigma}^{S', S}$, which we may also denote $\hol^{S', S}(\sigma)$, mapping a neighbourhood of $x$ in $S$ to a neighbourhood of $y$ in $S'$, and mapping $x$ to $y$, defined as follows. Choose a finite cover $\{U_0, U_2, \ldots U_k\}$ of $\sigma([0,1])$ by foliation charts, such that $U_{i-1} \cap U_i \neq \emptyset$ for $1 \leq i \leq k$, and $x \in U_0$ and $y \in U_k$. A point $x' \in S$ near $x$ is contained in a unique plaque (leaf of $\cl{F}|_{U_0}$) of $U_0$, which we denote $P_0$. The plaque $P_0$ intersects a unique plaque $P_1$ of $U_1$; uniqueness follows from the definition of a foliated atlas. The plaque $P_1$ intersects a unique plaque $P_2$ of $U_2$, and we continue this chain to find that $x'$ determines a unique plaque $P_k$ of $U_k$. The plaque $P_k$ intersects the transversal $S'$ at a unique point $y'$, provided we chose $x'$ close enough to $x$. We define $h^{S',S}_\sigma$ to be the diffeomorphism $x' \mapsto y'$ here described.

It is explained in detail in \cite{Mo88} that the germ of $h_{\sigma}^{S', S}$ at $x$ is independent of the choice of open cover $\{U_0, \ldots, U_k\}$. In fact, it only depends on the homotopy class of $\sigma$ relative endpoints. When $x=y$ and $S=S'$, the germs of the holonomy diffeomorphisms $\{\germ_x h_{\sigma}^{S, S} \mid [\sigma]\in \pi_1(L, x)\}$ form a group, which we will denote by $\Hol_S( L, x)$. Finally, denoting by $\bar{x}$ the constant path at $x$, and noting that if $T$ is another transversal at $x$, the map
\begin{equation}\label{conjugate}
  \Hol_S(L, x)\rightarrow \Hol_T(L, x), \quad  \germ_x h_{\sigma}^{S}\mapsto  \germ_x h_{\bar{x}}^{S, T}\circ h_{\sigma}^{S} \circ (h_{\bar{x}}^{S, T})^{-1}
\end{equation}
is a group isomorphism, thus we see that up to conjugacy $\Hol_S(L, x)$ is independent of the choice of a transversal. We therefore refer to $\Hol_S(L,x)$ as the \define{holonomy group} of the leaf $L$ \define{at} $x$, and denote it simply by $\Hol(L, x)$. We say two paths, $\sigma_1,\sigma_2$ in $L$ with the same endpoints $x$ and $y$ are in the same holonomy class if the concatenation $\sigma * \sigma^{-1}$ represents the identity in $\Hol(L, x)$. To be in the same holonomy class is an equivalence relation.

If $S$ is a complete transversal to $\cl{F}$, meaning it is transverse to the leaves and meets every leaf at least once, we define the pseudogroup $\Psi_S(M, \cl{F})$ to be generated by all the holonomy diffeomorphisms $h_\sigma^{S,S}$ for leafwise paths $\sigma$ beginning and ending in $S$. If $S'$ is another complete transversal to $\cl{F}$, the pseudogroup $\Psi_{S'}(M, \cl{F})$ is equivalent to $\Psi_S(M, \cl{F})$, so up to equivalence we may speak of the \define{holonomy pseudogroup} of $\cl{F}$, denoted $\Psi(M, \cl{F})$. Because we may cover $M$ with countably many foliated charts, this pseudogroup is countably generated.

Now we discuss foliated bundles.

\begin{defn}\label{foliate-v-bundle}
  Suppose that $(M,\mathcal{F})$ is a foliated manifold, $G$ is a Lie group, and $\pi: P\rightarrow M$ is a principal $G$-bundle over $M$. We say $P$ is \define{foliated} when $P$ is equipped with a $G$-equivariant foliation $\cl{F}_P$, of the same dimension as $\mathcal{F}$, such that $T_p L_P \cap \operatorname{Ker}(d_p\pi) = \{0\}$ for all $p \in P$. We say a vector bundle $E\rightarrow M$ is foliated if its associated principal $GL(r)$-bundle is foliated.
  % an open cover $\{U_{\alpha}\}$ of $M$, and a collection of local trivializations
  % $\{\Phi_{\alpha} : \pi^{-1}(U_\alpha)\rightarrow U_{\alpha}\times \mathbb{R}^k\}$, such that on the overlap $U_{\alpha\beta}:=U_{\alpha}\cap U_{\beta}$, the transition function
  % $g_{\alpha\beta}: U_{\alpha\beta}\rightarrow GL(k, \mathbb{R})$ is constant on each leaf of $\mathcal{F}\vert_{U_{\alpha\beta}}$.
\end{defn}
\begin{rem}
  For a foliated bundle $\pi:P \to M$, the projection $\pi$ maps the leaves of the lifted foliation $\cl{F}_P$ to leaves of $\cl{F}$. Furthermore, the restriction $\pi:L_P \to L$ is a covering map.
\end{rem}

The standard example of a foliated bundle is the normal bundle $N\cl{F}$ of a foliation, given as follows. Let $X$ be a vector field on $M$ tangent to $\cl{F}$.  Denote by $\phi_t$ the local flow on $M$ generated by $X$. For each $t$, the tangent map of $\phi_t$ induces a diffeomorphism $(\phi_t)_*: N\mathcal{F} \to N\mathcal{F}$. This gives rise to a local flow $(\phi_t)_*$ and a corresponding vector field $X_{N\cl{F}}$ on $N\cl{F}$ which projects down to $X$ under the bundle projection $\pi:N\cl{F} \to M$. In particular, the lift to $N\cl{F}$ of all vector fields tangent to $\cl{F}$ defines a foliation $\mathcal{F}_{N\mathcal{F}}$ on $N\mathcal{F}$, such that $(N\mathcal{F}, \cl{F}_{N\cl{F}})$ is a foliated vector bundle over $(M, \mathcal{F})$.

This next example is relevant in the sequel.
\begin{ex}\label{ex:lifted-foliation} \
  Let $(M,\mathcal{F})$ be a foliated manifold, and let $A$ be a foliated submanifold. Suppose that $X$ is a vector field on $M$ tangent to $\cl{F}$. Denote by $\phi_t$ the local flow on $A$ generated by $X|_A$. For each $t$, the tangent map of $\phi_t$ induces a diffeomorphism $(\phi_t)_*: NA\rightarrow NA$. This gives rise to a local flow $(\phi_t)_*$ on $NA$ and a corresponding vector field $X_{NA}$ on $NA$ which projects down to $A$ under the bundle projection $\pi:NA \to A$. If we lift all vector fields tangent to $\mathcal{F}$ in this manner, we get a foliation $\mathcal{F}_{NA}$ on $NA$, such that $(NA, \cl{F}_{NA})$ is a foliated vector bundle over $A$. In addition, if we have a transverse $\mathfrak{g}$-action on $(M, \mathcal{F})$ that leaves $A$ invariant, it lifts to a transverse $\mathfrak{g}$-action on $(NA, \cl{F}_{NA})$.
  % We can also describe $\cl{F}_{NA}$ with a Haefliger cocycle. Let $(s_i:U_i \to \R^q, h_{ij})$ be a Haefliger cocycle for $(M,\cl{F})$ such that
  % \begin{equation*}
  %   s_i(A) \subseteq \R^{d-q} \times \{0\} \subseteq \R^{d-q} \times \R^{n-d},
  % \end{equation*}
  % where $d$ is the dimension of $A$. Then, for each $x \in U_i \cap A$, the image $d_xs_i(T_xX)$ is the subspace $\R^{d-p} \times \{0\}$ in $\R^q$, so we get the diagram
  % \begin{equation*}
  %   \begin{tikzcd}
  %     TU_i|_{U_i \cap A} \ar[d] \ar[r, "ds_i"] & s_i(A) \times \R^q \ar[d] \\
  %     NX|_{U_i \cap A} \ar[r, "\overline{ds_i}"] & s_i(A) \times \R^q/\R^{d-p}.
  %   \end{tikzcd}
  % \end{equation*}
  % We have $\R^q/\R^{d-p} \cong \R^{n-d}$, and the map $\overline{T_xs_i}:N_xX \to \R^{n-d}$ is an isomorphism by dimension count. The relevant foliation on the frame bundle $F(NA) \to A$ is then given by the Haefliger cocycle determined by the submersions
  % \begin{equation*}
  %   \ti{s}_i:F(NA)_{U_i \cap A} \to \R^{d-p} \times \GL_{n-d}, \quad (x,e) \mapsto (s_i(x), \overline{d_xs_i} \circ e).
  % \end{equation*}
\end{ex}

Let $X$ be a $\mathfrak{g}$-invariant embedded submanifold of $M$, and let $\mathcal{F}_X := \mathcal{F}\vert_X$ be the restriction of the foliation to $X$. Then by Example \ref{ex:lifted-foliation}, the normal bundle
\begin{equation*}
  NX =TM\vert_X /TX\cong N\mathcal{F}/N\mathcal{F}_X
\end{equation*}
is a foliated vector bundle over $(X, \mathcal{F}_X )$ and is equipped with a natural transverse $\mathfrak{g}$-action with the property that the bundle projection $N X \rightarrow X$ is equivariant.

\begin{defn}
  \label{def:tubular-neighbourhood}
  A $\mathfrak{g}$-invariant \define{tubular neighbourhood} of X is a $\mathfrak{g}$-equivariant foliate embedding $\phi : NX \hookrightarrow M$ with the following properties:
  \begin{itemize}
  \item the image $\phi(NX)$ is a $\mathfrak{g}$-invariant open subset of $M$;
  \item  $\phi\vert_X =id_X$;
    \item and $T_x \phi = id_{N_xX}$ for all $x \in X$.
  \end{itemize}
   Here we identify $X$ with the zero section of $NX$ and the normal bundle of $X$ in $NX$ with $NX$.
\end{defn}

Finally, we introduce the notion of a transversal foliated principal bundle. Suppose that $P\rightarrow M$ is a foliated principal $G$-bundle over a foliated manifold $(M, \mathcal{F})$. Take a leafwise path $\sigma: [0, 1]\rightarrow M$ from $x_0$ and $y_0$, and take transversals $S$ and $S'$ through $x_0$ and $y_0$, respectively. The Bott connection induces a partial flat connection on $P$ and a parallel transport $P_{x_0}\rightarrow P_{y_0}$ that depends only on the homotopy class of the path $\sigma$ relative endpoints.

\begin{defn}
  We say that $\pi: P \to M$ is a \define{transversal} principal $G$-bundle, if for any leafwise path $\sigma$, the induced parallel transport depends only on the holonomy class of $\sigma$. We say a vector bundle $E\rightarrow M$ is transversal if its associated principal $GL(r)$-bundle is transversal.
\end{defn}
The normal bundle of a foliation $N\cl{F} \to M$ is transversal. In this article, we will recognize transversal principal bundles using the following example.
\begin{ex}
  \label{ex:simply-connected-leaves-transversal}
  Suppose that $\pi: P \to M$ is a foliated principal $G$-bundle over a foliation $(M,\cl{F})$ whose leaves are simply-connected. The induced parallel transport $P_{x_0} \to P_{y_0}$ depends only on the homotopy class of $\sigma$, which is entirely determined by $x_0$ and $y_0$. Therefore, the parallel transport depends only on the holonomy class of $\sigma$ too, hence $P \to M$ is transversal. 
\end{ex}

 We will need the upcoming Lemma \ref{quotient-bundle}, which relies on a refined understanding of the parallel transport induced by $\sigma$, which we outline in the following Remark \ref{rem:transversal-bundles}. 

\begin{rem}
  \label{rem:transversal-bundles}
  Suppose that $\pi: P \to M$ is a foliated principal bundle over a foliated manifold $(M, \cl{F})$, and that $\sigma$ is a leafwise path from $x_0$ to $y_0$. Fix transversals $S$ and $S'$ at $x_0$ and $y_0$ respectively. The holonomy diffeomorphism $h_\sigma^{S', S}:S \to S'$ determines a smooth map $F:[0,1] \times S \to M$ such that $F(\cdot, x)$ is a leafwise path, $F(0,x) = x$, and $F(1,x) = h_\sigma^{S',S}(x)$, for all $x \in S$. This gives rise to a family of parallel transports $\tau(x):P_x \to P_{h_\sigma^{S',S}(x)}$ for each $x \in S$. Define $\tau:P|_S \to P|_{S'}$ by $\tau|_{P_x} = \tau(x)$. Using the smooth dependence of the solution of a linear system of ODEs on parameters, one can see that $\tau$ defines a smooth isomorphism from the principal bundles $P|_S$ to $P|_{S'}$ which covers the map $h_\sigma^{S', S}:S \to S'$.
\end{rem}

\begin{lem}\label{quotient-bundle}
  Suppose that $\pi:P\rightarrow M$ is a transversal principal $G$-bundle on a foliated manifold $(M, \mathcal{F})$, and that $(M, \mathcal{F})$ is a simple foliation. Then $(P, \mathcal{F}_P)$ is also a simple foliation. Moreover, its leaf space $P/\mathcal{F}_P$ is a principal $G$-bundle over the smooth manifold $M/\mathcal{F}$.
\end{lem}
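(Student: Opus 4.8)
The plan is to exploit the fact that, because $(M,\cl{F})$ is simple, the holonomy of every leaf is trivial, which collapses the transversality hypothesis into a clean statement about parallel transport. First I would fix the quotient submersion $\pi_M : M \to \ol{M} := M/\cl{F}$ and observe that every leaf $L = \pi_M^{-1}(b)$ has trivial holonomy group. Indeed, choose a transversal $S$ at $x \in L$ so that $\pi_M|_S$ is a diffeomorphism onto a neighbourhood of $b$. Any nearby point $x' \in S$ lies in the single fiber $\pi_M^{-1}(\pi_M(x'))$, and following plaques along a loop $\sigma \subset L$ keeps the corresponding leaf inside that fiber, so the holonomy image of $x'$ must return to the unique point $S \cap \pi_M^{-1}(\pi_M(x')) = \{x'\}$; thus $\Hol(L,x) = \{1\}$. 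Consequently any two leafwise paths with the same endpoints lie in the same holonomy class, and since $P \to M$ is transversal, the induced parallel transport $P_x \to P_y$ depends only on $x,y \in L$, not on the chosen path.

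Next I would build charts for the leaf space. Cover $\ol{M}$ by opens $\ol{U}_\alpha$ admitting sections $s_\alpha : \ol{U}_\alpha \to M$ of $\pi_M$, and set $S_\alpha := s_\alpha(\ol{U}_\alpha)$ and $U_\alpha := \pi_M^{-1}(\ol{U}_\alpha)$; each $S_\alpha$ is a transversal meeting every leaf of $\cl{F}|_{U_\alpha}$ exactly once. Using path-independent parallel transport, I would define a retraction $r_\alpha : P|_{U_\alpha} \to P|_{S_\alpha}$ sending $p \in P_x$ to its parallel transport into $P_{x_L}$, where $x_L$ is the unique point of $L \cap S_\alpha$ for the leaf $L$ through $x$. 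The smooth dependence of parallel transport on the base point, recorded in Remark \ref{rem:transversal-bundles}, shows $r_\alpha$ is a smooth submersion, and its fibers are precisely the leaves of $\cl{F}_P|_{U_\alpha}$: the partial flat Bott connection restricted to $L$ is a flat connection on $P|_L$ with trivial holonomy, so $P|_L \cong L \times G$ with the leaves $L_P$ of $\cl{F}_P$ being the slices $L \times \{g\}$, each of which meets $P|_{S_\alpha}$ in exactly one point. Hence $r_\alpha$ identifies $P|_{U_\alpha}/\cl{F}_P$ with the smooth manifold $P|_{S_\alpha}$.

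I would then glue these charts. On overlaps, the transition between the identifications $P|_{U_\alpha}/\cl{F}_P \cong P|_{S_\alpha}$ and $P|_{U_\beta}/\cl{F}_P \cong P|_{S_\beta}$ is exactly the parallel-transport isomorphism $P|_{S_\alpha} \dashrightarrow P|_{S_\beta}$ along leafwise paths, again smooth by Remark \ref{rem:transversal-bundles}. Thus $\ol{P} := P/\cl{F}_P$ acquires a smooth manifold structure for which each $r_\alpha$, and therefore the global quotient map $P \to \ol{P}$, is a submersion; its fibers are the leaves $L_P$, which are connected, so $(P,\cl{F}_P)$ is simple. Finally, since $\cl{F}_P$ is $G$-equivariant the $G$-action descends to $\ol{P}$, the map $\pi$ descends to $\ol{\pi}:\ol{P} \to \ol{M}$ along whose fibers $G$ acts freely and transitively (each fiber being $P|_L/\cl{F}_P \cong G$), and a trivialization of $P$ over each $S_\alpha \cong \ol{U}_\alpha$ furnishes a local trivialization $\ol{P}|_{\ol{U}_\alpha} \cong \ol{U}_\alpha \times G$; hence $\ol{P} \to \ol{M}$ is a principal $G$-bundle.

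The main obstacle I expect is the smoothness and coherence of the quotient smooth structure: one must verify carefully that parallel transport depends smoothly on the base point, so that $r_\alpha$ is a genuine smooth submersion and the transition maps are smooth. This is precisely where Remark \ref{rem:transversal-bundles} does the essential work, while the reduction to path-independent transport via trivial holonomy is conceptually straightforward by comparison.
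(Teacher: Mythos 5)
Your proposal is correct and follows essentially the same route as the paper's proof: both reduce simplicity of $(M,\cl{F})$ to path-independence of parallel transport (the paper asserts that all leafwise paths between two points share a holonomy class, which you prove via trivial holonomy), then build local identifications of $P/\cl{F}_P$ using sections of $M \to M/\cl{F}$ together with parallel transport, and invoke Remark \ref{rem:transversal-bundles} to get smoothness of the transition maps. The only cosmetic difference is that the paper phrases the charts as maps $\hat{\pi}^{-1}(U_\alpha) \to U_\alpha \times G$ built from a trivialization of $P$ and a section, whereas you retract onto $P|_{S_\alpha}$ and glue; these are the same construction.
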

\begin{proof}
  Let $p:M \to M/\cl{F}$ be the quotient map from $M$ onto its leaf space. For a point $x \in M$, we will denote by $L_x \subset M$ the leaf of $\cl{F}$ that passes through $x$, and by $[x]$ the image of $x$ under the quotient map. For all $x_0, y_0 \in L_x$, choose a path $\sigma$ that lies in $L_{x_0}$ joining $x_0$ and $y_0$, and two transversals $S$ and $S'$ to the leaves passing through $x_0$ and $y_0$ respectively. Shrinking $S$ and $S'$ if necessary, we may assume that there is a holonomy diffeomorphism $h_\sigma^{S', S}$ from $S$ onto $S'$ that maps $x_0$ to $y_0$. Since the foliation $\cl{F}$ is simple, all leafwise paths from $x_0$ to $y_0$ have the same holonomy class. So by assumption, the Bott connection determines a unique parallel transport $\tau_{y_0, x_0}:P_{x_0} \to P_{y_0}$. For each $u \in P_{x_0}$ and $v \in P_{y_0}$, we define $u \sim v$ if and only if $v = \tau_{y_0, x_0}(u)$. This defines an equivalence relation ${\sim}$ on $P$.  We will denote by $\hat{P}$ the quotient topological space $P/{\sim}$, and by $q:P \to \hat{P}$ the quotient map. We claim that $\hat{P}$ is a principal $G$-bundle over $M/\cl{F}$ with fiber $\hat{P}_{[x]} = q(P_x)$ at $[x] \in M/\cl{F}$.

  To see this, first note that there is a natural quotient map $\hat{\pi}:\hat{P} \to M/\cl{F}$ such that the following diagram commutes.
  \begin{equation*}
    \begin{tikzcd}
      P \ar[r, "q"] \ar[d, "\pi"] & \hat{P} \ar[d, "\hat{\pi}"]\\
      M \ar[r, "p"] & M/\cl{F}.
    \end{tikzcd}
  \end{equation*}

  Next, choose an open cover $\{V_\alpha\}$ of $M$ with the following properties. For each $\alpha$, there is a smooth trivialization map $\Gamma_\alpha:\pi^{-1}(V_\alpha) \to V_\alpha \times G$ such that the following diagram commutes.
  \begin{equation*}
    \begin{tikzcd}
      \pi^{-1}(V_\alpha) \ar[r, "\Phi_\alpha"] \ar[dr, "\pi"] & V_\alpha \times G \ar[d, "\text{projection}"] \\
      & V_\alpha.
    \end{tikzcd}
  \end{equation*}
  We may also assume that each $V_\alpha$ is small enough such that the local quotient map $p:V_\alpha \to U_\alpha := p(V_\alpha) \subseteq M$ has a section $s_\alpha:U_\alpha \to V_\alpha$.

  For each $\alpha$, define $\phi_\alpha:\hat{\pi}^{-1}(U_\alpha) \to U_\alpha \times G$ by the formula $\phi_\alpha^{-1}([z],g) = q \circ \Phi_\alpha^{-1}(s_\alpha([z]), g)$, for each $[z] \in U_\alpha$ and $g \in G$. It is straightforward to check that $\phi_\alpha$ is a topological homeomorphism for each $\alpha$, and that the following diagram commutes.
  \begin{equation}
    \label{eq:trivialization}
    \begin{tikzcd}
      \hat{\pi}^{-1}(U_\alpha) \ar[r, "\phi_\alpha"] \ar[dr, "{\hat{\pi}}"] & U_\alpha \times G \ar[d, "\text{projection}"] \\
      & U_\alpha.
    \end{tikzcd}
  \end{equation}

  To finish the proof, it suffices to show that when $U_\alpha \cap U_\beta \neq \emptyset$, the transition map
  \begin{equation}
    \label{eq:transition-function}
    \phi_\alpha \circ \phi_\beta^{-1}:(U_\alpha \cap U_\beta) \times G \to (U_\alpha \cap U_\beta) \times G
  \end{equation}
  is smooth. In view of the commutativity of the diagram \eqref{eq:trivialization}, the only thing that requires proof is that the second component of the map \eqref{eq:transition-function} is smooth. But this follows easily from Remark \ref{rem:transversal-bundles}.
\end{proof}

\section{Review of Riemannian foliations}
\label{sec:review-riemannian-fol}
In this section, we review Riemannian foliations. Our main source is \cite{Mo88}.
\begin{defn}\label{bundle-like}  Let $(M, \mathcal{F})$ be a foliated manifold. We say $(M, \mathcal{F})$ is \define{Riemannian} if there exists a Riemannian metric $g_{TM}$ on $M$, called a \define{bundle-like} metric, such that $g_{TM}(X, Y)$ is a basic function for any two foliate vector fields $X$ and $Y$ that are also perpendicular to the leaves.  We say a Riemannian foliation $(M, \mathcal{F})$ is \define{complete} if it admits a complete bundle-like metric (cf.\ \cite{ACF22}).
\end{defn}

 In Example \ref{ex:homogeneous-foliations}, the foliation $\cl{F}$ is Riemannian. The metric $g$ is a compatible bundle-like metric for $\cl{F}$.

\begin{defn}\label{transverse-riemann}
  Let $(M, \mathcal{F})$ be a foliate manifold. We say that a non-negative and symmetric type $(0, 2)$ tensor $g$ is a \define{transversely Riemannian metric} on $(M, \mathcal{F})$, if it satisfies
  \begin{equation*}
   \iota_X g=0 \text{ and } \mathcal{L}_Xg=0 \text{ for all } X\in\mathfrak{X}(\mathcal{F}). 
 \end{equation*}
Every bundle-like metric $g_{TM}$ induces a transversely Riemannian metric $g$, and every transversely Riemannian metric $g$ is induced by some bundle-like metric $g_{TM}$. Thus, to specify a Riemannian foliation $(M, \cl{F})$, it is equivalent to give a bundle-like metric, or a transversely Riemannian metric.
 
%Clearly, a transversely Riemannian metric can be naturally identified with a Riemannian metric on the normal bundle 
%$N\mathcal{F}:=TM/T\mathcal{F}$ of the foliation that is invariant under the local flow generated by a vector field tangent to the leaves.
%We say that a Riemannian metric $g_{TM}$ is \textbf{bundle-like}, if the restriction of $g_{TM}$ to $N\mathcal{F}$ is a transversely Riemannian metric.  
\end{defn}

Let $g$ be a transversely Riemannian metric on a foliated manifold $(M, \mathcal{F})$, and let $\overline{X} \in \mathfrak{X}(M, \mathcal{F})$ be a transverse vector field. Choose a foliate vector field representing $\overline{X}$ and define
\begin{equation*}
  \mathcal{L}_{\overline{X}}g:=\mathcal{L}_Xg.
\end{equation*}
It follows from Definition \ref{transverse-riemann} that the definition does not depend on the choice of a particular foliated vector field representing $\overline{X}$. We say the transverse vector field $\overline{X}$ is \define{transverse Killing} if $\mathcal{L}_{\overline{X}}g=0$. From the usual Cartan identity, we see that the space of all transverse Killing vector fields on $(M, \mathcal{F}, g)$ forms a Lie subalgebra of $\mathfrak{X}(M, \mathcal{F})$, which we denote by $\text{Iso}(M, \mathcal{F}, g)$. We say a transverse Lie algebra action $a: \mathfrak{g} \to \mathfrak{X}(M, \mathcal{F})$ is \define{isometric}, if $a(\xi)\in  \text{Iso}(M, \mathcal{F}, g)$. 

\begin{defn}
  Let $(M, \mathcal{F})$ be a foliated manifold of co-dimension $q$. We call a set of $q$ foliate vector fields $\{X_1, \cdots, X_q\}$ a \define{transverse parallelism} if they are everywhere linear independent. We say that $(M, \mathcal{F})$ is \define{transversely parallizable} if it admits a transverse parallelism.
\end{defn}

\begin{ex}\label{ex:transverse-parallelism-metric}
  Let $(M, \mathcal{F})$ be a transversely parallizable foliation of co-dimension $q$. Then $ \mathcal{F}$ must be a Riemannian foliation. Indeed, let $\{X_1, \cdots, X_q\}$ be a transverse parallelism on $(M, \mathcal{F})$. Then the equations
  \begin{align*}
    \alpha^i(X_j) &=\delta_i^j \text{ for } 1\leq i, j\leq q \\
    \alpha^i(X) &=0 \text{ for }  X\in \mathfrak{X}(M, \mathcal{F})
  \end{align*}
  uniquely determine $q$ many basic 1-forms $\alpha^1, \cdots, \alpha^q$. The metric
  \begin{equation*}
    g := \sum_{i=1}^q \alpha^i \otimes \alpha^i
  \end{equation*}
  is a transversely Riemannian metric.
\end{ex}

%and by $\mathfrak{X}(\mathcal{F})\subset\mathfrak{X}(M)$ the subspace of vector fields on $M$ that are tangent to the leaves of $\mathcal{F}$.

%We say that a vector field $X\in\mathfrak{X}(M)$ is \emph{foliate}, if $[X,Y]\in\mathfrak{X}(\mathcal{F})$ for all $Y\in \mathfrak{X}(\mathcal{F})$.
%We will denote by
%$\mathfrak{R}(\mathcal{F})$ the space of foliate vector fields on $(M,\mathcal{F})$.
%Clearly we have that
%$\mathfrak{X}(\mathcal{F}) \subset \mathfrak{R}(\mathcal{F})$.

%\begin{defn}  We say that a Riemannian metric $g$
%on a foliated manifold $(M, \mathcal{F})$ is \textbf{bundle-like}, if it has the following property: for any open set $U$ of $M$, and for all vectors $v$ and $w$ on $U$ that are foliate and perpendicular to the leaves,   $g(v, w)$ is a basic function on $U$.
%\end{defn}

%Suppose that there is a bundle-like Reimannian metric $g_{TM}$ on $(M, \mathcal{F})$. By identifying $N\mathcal{F}$ with the $g$ orthogonal complement of $T\mathcal{F}$ and then restricting $g$ to $N\mathcal{F}$, we obtain a transverse Riemannian metric $g_T$. Conversely, every transverse Riemannian metric $g_T$ is induced by a bundle-like Riemannian metric this way.
\subsection{Molino's structure theory of a Riemannian foliation}\label{Molino-theory}

Let $g$ be a transverse Riemannian metric on a complete Riemannian foliation $(M, \mathcal{F})$. Let $\pi: P\rightarrow M$ be the transverse orthonormal frame bundle associated to the transverse Riemannian metric $g$, whose fiber over $x \in M$ consists of orthogonal transformations $\R^q \to N_x\cl{F}$. Let $K := O(q)$ be the structure Lie group of $P$.  Then the foliation $\mathcal{F}$ naturally lifts to a transversely parallizable foliation $\mathcal{F}_P$ on $P$ that is invariant under the action of $K$; moreover, there is a locally trivial fibration $\rho: P\rightarrow W$, called the basic fibration, whose fibers are leaf closures of $\mathcal{F}_P$. In a diagram,
\begin{equation*}
  \begin{tikzcd}
    & P \ar[dl, "\pi"'] \ar[dr, "\rho"] & \\
    M & & W = P/\ol{\cl{F}_P}.
  \end{tikzcd}
\end{equation*}
Let $\ol{L_P} \subset P$ be the closure of a leaf $L_P$ of $(P, \mathcal{F}_P)$. Then $(\overline{L_P}, \mathcal{F}_P\vert_{\overline{L_P}})$ is a complete transversely parallizable foliation, for which the Lie algebra of transverse vector fields $\mathfrak{g}:=\mathfrak{X}(\overline{L_P}, \mathcal{F}_P\vert_{\overline{L_P}})$ is finite dimensional. Indeed, $(\overline{L_P}, \mathcal{F}_P\vert_{\overline{L_P}})$ is a complete $\mathfrak{g}$-Lie foliation, in the sense of Definition \ref{Lie-foliation}. Since $(P, \mathcal{F}_P)$ is a complete transversely parallizable foliation, the Lie algebra $\mathfrak{g}$ is independent of the choice of a particular leaf closure. It is also shown in \cite[Section 5.1]{Mo88} that $\mathfrak{g}$ is independent of the choice of a particular transverse Riemannian metric on $(M, \mathcal{F})$. We call $\mathfrak{g}$ the \textbf{structure Lie algebra} of $(M, \mathcal{F})$, and will denote it by $\mathfrak{g}=\mathfrak{g}(M, \mathcal{F})$.  Also note that the leaf $L_P$ projects by $\pi$ to a leaf $L$ in $(M, \mathcal{F})$. We will need the following description of the closure of $L$.
 
 \begin{lem}[{\cite[Lemma 5.1]{Mo88}}] \label{leaf-closure}
   The closure $\overline{L_P}$ of $L_P$ projects by $\pi$ onto the closure of $L$. Moreover, the map $\pi\vert_{\overline{L_P}}: \overline{L_P}\rightarrow \overline{L}$ has constant rank, and its image $\overline{L}$ is an embedded foliated submanifold of $M$.
 \end{lem}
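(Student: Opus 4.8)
The plan is to use the free $K = O(q)$-action on $P$ together with the basic fibration $\rho\colon P \to W$ to realize $\overline{L}$ as the quotient of the fiber $\overline{L_P} = \rho^{-1}(w)$ by a compact isotropy group, and then read off all three assertions from the resulting quotient. First I would record that $\pi\colon P \to M$ is proper, since its structure group $K = O(q)$ is compact; hence $\pi$ is a closed map. As $\overline{L_P}$ is closed in $P$, the image $\pi(\overline{L_P})$ is closed and contains $L = \pi(L_P)$, so $\overline{L} \subseteq \pi(\overline{L_P})$; continuity gives the reverse inclusion $\pi(\overline{L_P}) \subseteq \overline{\pi(L_P)} = \overline{L}$. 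This proves $\pi(\overline{L_P}) = \overline{L}$.

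The structural heart of the argument comes next. Since $K$ preserves $\mathcal{F}_P$, it permutes the leaf closures, that is, the fibers of $\rho$; thus $K$ acts (smoothly) on $W$ and $\rho$ is $K$-equivariant. Writing $w = \rho(\overline{L_P})$ and $K_w = \{k \in K : k \cdot w = w\}$ for its compact isotropy group, equivariance gives $k \cdot \overline{L_P} = \rho^{-1}(k \cdot w)$, so $\overline{L_P}$ is precisely $K_w$-invariant. Because $K$ acts freely and transitively on each fiber of $\pi$, for $p \in \overline{L_P}$ with $x = \pi(p)$ I can compute $\pi^{-1}(x) \cap \overline{L_P} = \{k \cdot p : \rho(k \cdot p) = w\} = \{k \cdot p : k \cdot w = w\} = K_w \cdot p$. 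Hence the fibers of $\pi|_{\overline{L_P}}$ are exactly the $K_w$-orbits, and $\pi|_{\overline{L_P}}$ descends to a bijection $\overline{L_P}/K_w \to \overline{L}$.

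Now $K_w$ is a compact Lie group acting freely, hence properly, on the manifold $\overline{L_P}$, so $\overline{L_P}/K_w$ is a smooth manifold and $q_w\colon \overline{L_P} \to \overline{L_P}/K_w$ is a submersion (a principal $K_w$-bundle); I transport this smooth structure to $\overline{L}$, and $\pi|_{\overline{L_P}}$ descends to a smooth injective map $\overline{\iota}\colon \overline{L_P}/K_w \to M$. To see $\overline{\iota}$ is an immersion, I compute $\ker d(\pi|_{\overline{L_P}})_p = T_p\overline{L_P} \cap \ker d\pi_p$: a vertical vector in $T_p\overline{L_P}$ has the form $\xi_P(p)$ with $\xi \in \mathfrak{k}$, and lies in $T_p\rho^{-1}(w)$ iff $d\rho_p(\xi_P(p)) = \xi_W(w) = 0$, i.e.\ iff $\xi \in \mathfrak{k}_w = \operatorname{Lie}(K_w)$; thus $\ker d(\pi|_{\overline{L_P}})_p = T_p(K_w \cdot p) = \ker d(q_w)_p$. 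Since $\pi|_{\overline{L_P}} = \overline{\iota} \circ q_w$ and $q_w$ is a submersion with this same kernel, $d\overline{\iota}$ is injective. Finally $\overline{\iota}$ is proper, because $\pi|_{\overline{L_P}}$ is proper and the closed surjection $\pi|_{\overline{L_P}}$ is a quotient map, so the quotient topology on $\overline{L}$ agrees with the subspace topology from $M$. A proper injective immersion is an embedding, so $\overline{L}$ is an embedded submanifold, and the factorization $\pi|_{\overline{L_P}} = \overline{\iota} \circ q_w$ shows $\pi|_{\overline{L_P}}$ has constant rank $\dim \overline{L}$. Saturation is then immediate: each fiber $\overline{L_P}$ is a union of leaves of $\mathcal{F}_P$, and $\pi$ maps each leaf $L_P(p) \subseteq \overline{L_P}$ onto the leaf $L(\pi(p))$ via a covering, so $L(\pi(p)) \subseteq \overline{L}$; hence $\overline{L}$ is $\mathcal{F}$-saturated and therefore a foliated submanifold.

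The main obstacle is the middle of the argument: identifying the fibers of $\pi|_{\overline{L_P}}$ with single $K_w$-orbits (which rests on the equivariance of $\rho$ and the freeness of the $K$-action) and the accompanying tangent-space computation $\ker d(\pi|_{\overline{L_P}})_p = T_p(K_w\cdot p)$, then upgrading the resulting injective immersion to an embedding. The embedding step genuinely requires properness — constant rank alone does not suffice, as a figure-eight immersion of a compact manifold shows — which is exactly why presenting $\overline{L}$ as the quotient of $\overline{L_P}$ by a free, proper, compact group action is the cleanest route.
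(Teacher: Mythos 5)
Your proof is correct, but there is nothing in the paper to compare it against: the paper states this lemma purely as a quotation of Molino's Lemma 5.1 from \cite{Mo88} and supplies no proof of its own. Your argument is in fact a faithful, self-contained reconstruction of the standard (Molino-style) proof, organized around three correct pillars: (i) properness of $\pi$ (compact structure group $K=O(q)$) gives $\pi(\overline{L_P})=\overline{L}$; (ii) $K$-equivariance of the basic fibration $\rho\colon P\to W$ --- which holds because $K$ preserves $\mathcal{F}_P$, hence permutes leaf closures, and the induced action on $W$ is smooth because $\rho$ is a surjective submersion --- identifies the fibers of $\pi|_{\overline{L_P}}$ with orbits of the compact isotropy group $K_w$; (iii) the quotient manifold theorem for the free $K_w$-action on $\overline{L_P}$ (which is an embedded submanifold of $P$, being a fiber of the locally trivial fibration $\rho$), combined with ``proper injective immersion $\Rightarrow$ embedding,'' yields embeddedness of $\overline{L}$ and the constant-rank factorization $\pi|_{\overline{L_P}}=\overline{\iota}\circ q_w$. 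The two steps worth flagging, both fine, are the identification $\operatorname{Lie}(K_w)=\{\xi\in\mathfrak{k} \mid \xi_W(w)=0\}$ (standard for smooth actions, and exactly what your kernel computation needs) and the final saturation claim, which uses that leaf closures in $P$ are unions of leaves (immediate here, since every leaf of $\mathcal{F}_P$ maps to a single point under $\rho$) and that $\pi$ carries leaves of $\mathcal{F}_P$ onto leaves of $\mathcal{F}$ (the paper's remark on foliated bundles). One cosmetic slip: properness of the $K_w$-action follows from compactness of $K_w$, not from freeness; freeness is what makes the quotient a manifold and the orbit map a principal bundle.
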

 
% A transverse Riemnannian metric $g_P$ on $P$ can be constructed as follows. Since the foliation is Riemannian, the transverse Levi-Civitta connection $\theta_{LC}$ defines a basic one form on $P$ that takes values in the Lie algebra
 %$\mathfrak{t}=o(q)$. Choose an invariant inner product on $\mathfrak{t}$. Let
%$f_1, \cdots, f_r$ be an oriented orthonormal basis in the dual of $\mathfrak{t}$, and let $\alpha_i= \theta^*_{LC}(f_i)$, $\forall\, 1\leq i \leq r$.  Then 
%\begin{equation}\label{t-metric} g_P=\pi^* g+ \displaystyle \sum_{i=1}^r\alpha_i \otimes \alpha_i\end{equation}
%is a transverse Riemnannian metric on $P$.
 
% Thus the lifted foliation $(P, \mathcal{F}_P)$ is transversely parallizable.

We may give an explicit transverse parallelism of $(P, \cl{F}_P)$. By definition, a transverse orthonormal frame $z \in P$ over a point $x$ of $M$ is an orthonormal basis $\{\overline{X}_1, \ldots, \overline{X}_q\}$ in the normal space $N_x\mathcal{F}=T_xM/T_x\mathcal{F}$. We will identify $z$ with a linear isometry $z: (\mathbb{R}^q , \langle\,,\,\rangle)\rightarrow (N_x\mathcal{F}, g_x)$, where $\langle\,,\,\rangle$ is the standard Euclidean inner product on $\mathbb{R}^q$. In this notation, the fundamental 1-form $\omega$ of $P$ is a $\mathbb{R}^q$-valued 1-form defined by
\begin{equation*}
   \omega(X_z)=z^{-1}(\pi_*(X_z)), \quad \text{for } z\in P, \text{ and } X_z\in T_zP.
\end{equation*}
Since the foliation is Riemannian, we have a unique transverse Levi-Civita connection $1$-form $\theta_{LC}$ on $P$, which is a basic form that takes values in the Lie algebra $o(q)$. Let  $\{e_1,\cdots, e_q\}$  be an orthonormal basis of $\mathbb{R}^q$. Then the equations, for $1 \leq i \leq q$,
\begin{align*}
  \omega(u_i) &= e_i \\
  \theta(u_i) & =0
\end{align*}
determine $q$ many horizontal foliate vector fields $\{ u_1, \ldots, u_q\}$ on $(P, \mathcal{F}_P)$. On the other hand, choosing a basis $\{\lambda_1, \ldots, \lambda_{\frac{q(q-1)}{2}}\}$ of $o(q)$, the Lie algebra action of $o(q)$ on $P$ gives $\frac{q(q-1)}{2}$ vertical foliate vector fields $\{\lambda_{1,P}, \ldots, \lambda_{\frac{q(q-1)}{2}, P} \}$ on $P$ that are vertical. Putting them together,
\begin{equation}\label{t-parallelism}
  \{u_1, \ldots, u_q,  \lambda_{1,P}, \ldots, \lambda_{\frac{q(q-1)}{2},P} \}
\end{equation}
provides a transverse parallelism on $P$. As a consequence, we may construct a transverse Riemannian metric $g_P$ on $P$ following Example \ref{ex:transverse-parallelism-metric}. This transverse parallelism is complete, see \cite[Page 17]{ACF22}.

%$g_P$ is related to the transverse Riemannian metric $g$ on $M$ as follows. Let $g_{TM}$ be the bundle-like metric on $M$ that induces $g$, and
%$\forall\, 1\leq i\leq \frac{q(q-1)}{2}$, let $\alpha^i$ be the basic one form on $P$ determined by the equations
%\[ \alpha^i(\lambda_{j})=\delta_i^j, \,\alpha^i(u_j)=0, \,\alpha^i(Z)=0, \,\forall\, Z\in\mathfrak{X}(\mathcal{F}_P).\]
%Then $g_{TP}:=\pi^* g_{TM}+ \displaystyle \sum_{i=1}^{\frac{q(q-1)}{2}}\alpha_i \otimes \alpha_i$ is a bundle-like metric that is associated to the transverse Riemannian metric $g_P$. It is clear that the map $\pi: (P, g_{TP})\rightarrow (M, g_{TM})$ is a Riemannian submersion.

Let $X$ be a foliate vector field on $(M, \mathcal{F})$. Note that by \cite[Lemma 3.4]{Mo88}, $X$ naturally lifts to a foliate vector $X_P$ on $P$ that projects to $X$, and which has the property that $\mathcal{L}_{X_P} \theta_{LC}=0$. If $X$ is also Killing with respect to $g$, then by \cite[Proposition 3.4]{Mo88}, $X_P$ commutes with the transverse parallelism on $P$ given in the previous paragraph. So $X_P$ is Killing with respect to $g_P$. When $X$ is tangent to the leaves of $\mathcal{F}$, it is easy to check that $X_P$ is tangent to the leaves of the lifted foliation $\mathcal{F}_P$. Therefore for any open subset $V \subseteq M$, this lifting gives a Lie algebra homomorphism of transverse Killing vector fields:
\begin{equation}\label{lifting-homo}
  \pi^{\sharp}: \Iso(V, \mathcal{F}\vert_V, g\vert_V)\rightarrow \Iso(\pi^{-1}(V), \mathcal{F}_P\vert_{\pi^{-1}(V)}, g_P\vert_{\pi^{-1}(V)}).
\end{equation}
 
For each open $U \subseteq P$, define
\begin{equation*}
 \mathbf{C}(U, \mathcal{F}_P\vert_{U}) := \{ \overline{X}\in \mathfrak{X}(U, \mathcal{F}_P|_{U}) \mid  [\overline{X}, \overline{Y}]=0 \text{ for all } \overline{Y}\in \mathfrak{X}(P, \mathcal{F}_P)\}.
\end{equation*}
The assignment $U\mapsto \mathbf{C}(U, \mathcal{F}_P\vert_U)$ defines a pre-sheaf.  The associated sheaf is called the Molino sheaf on $(P, \mathcal{F}_P)$. Now, for each open $V \subseteq M$, define
\begin{equation*}
  \mathbf{C}(V, \mathcal{F}|_V) := \{\overline{X}\in \Iso(V, \mathcal{F}\vert_V, g\vert_V)\} \mid \pi^{\sharp} \overline{X}\in \mathbf{C}(\pi^{-1}(V), \mathcal{F}_P|_{\pi^{-1}(V)})\}
\end{equation*}
The  assignment
$V\mapsto \mathbf{C}(V, \mathcal{F}\vert_V)$ is a pre-sheaf, and the associated sheaf, denoted by $\mathbf{C}(M,\mathcal{F})$,  is called the Molino sheaf on $(M, \mathcal{F})$. It follows from \cite[Proposition 3.4]{Mo88} that the lifting homomorphism $\pi^{\sharp}$ induces an isomorphism $\mathbf{C}(M, \mathcal{F})\cong \left(\pi_*\mathbf{C}(P, \mathcal{F}_P)\right)^K$. We now summarize Molino's structure theory.

\begin{thm}[{\cite[Theorem 5.2]{Mo88}}]\label{Molino-sheaf}\ 
  \begin{enumeratea}
  \item The Molino sheaves $\mathbf{C}(P, \mathcal{F}_P)$ and $\mathbf{C}(M, \mathcal{F})$ are both locally constant. The orbits of $\mathbf{C}(P, \mathcal{F}_P)$ and $\mathbf{C}(M, \mathcal{F})$ are the leaf closures of $(P, \mathcal{F}_P)$ and $(M, \mathcal{F})$ respectively.  
\item The typical fiber of $\mathbf{C}(M, \mathcal{F})$ is the Lie algebra $\mathfrak{g}^-$ opposite to the structure Lie algebra $\mathfrak{g}(M, \mathcal{F})$.  Furthermore, all the global transverse vector fields commute with $\mathbf{C}(M, \mathcal{F})$. As an immediate result, when $\mathbf{C}(M, \mathcal{F})$ is globally constant, the structure Lie algebra $\mathfrak{g}(M, \mathcal{F})$ is abelian.
 \end{enumeratea}
\end{thm}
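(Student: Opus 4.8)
The plan is to prove everything first on the transversely parallelizable foliation $(P, \cl{F}_P)$ and then transport the conclusions down to $(M, \cl{F})$ through the isomorphism $\mbf{C}(M, \cl{F}) \cong (\pi_*\mbf{C}(P, \cl{F}_P))^K$ recorded just above the statement. For the local constancy in (a), I would exploit that a transversely parallelizable foliation is locally simple: each point of $P$ has a neighbourhood $U$ whose leaf space $U/\cl{F}_P$ is a manifold, under which $\mathfrak{X}(U, \cl{F}_P|_U) \cong \mathfrak{X}(U/\cl{F}_P)$, and the global parallelism \eqref{t-parallelism} descends to a genuine frame there. A local section of $\mbf{C}$ is a transverse field commuting with every global transverse field, in particular with this frame; commuting with a frame is a first-order linear condition whose solutions are rigid, being determined by their value at a single point. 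This bounds the stalks by a finite dimension and forces the sheaf of solutions to be locally constant. The same property passes to $\mbf{C}(M, \cl{F})$ since taking the $K$-invariant pushforward of a locally constant sheaf is locally constant.

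To identify the orbits in (a) as leaf closures, I would use the basic fibration $\rho : P \to W$, whose fibers are exactly the leaf closures $\ol{L_P}$, together with the fact that each $(\ol{L_P}, \cl{F}_P|_{\ol{L_P}})$ is a complete Lie-$\fk{g}$ foliation in the sense of Definition~\ref{Lie-foliation}. Fedida's theorem (Theorem~\ref{leaf-space}) realizes this leaf closure, up to covering, as a product $\tilde{L} \times G$, where $G$ is the simply connected group integrating $\fk{g}$, and identifies the commuting transverse fields with the right-invariant fields on $G$. Because a commuting field projects trivially under $\rho$ it preserves each leaf closure, while the right-invariant fields act transitively on $G$; hence the flows of local sections of $\mbf{C}$ sweep out precisely the leaf closure. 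Projecting by $\pi$ and invoking Lemma~\ref{leaf-closure} transfers this identification to the orbits of $\mbf{C}(M, \cl{F})$.

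For part (b), the same Fedida picture supplies the typical fiber: $\mathfrak{X}(\ol{L_P}, \cl{F}_P|_{\ol{L_P}}) \cong \fk{g}$ corresponds to the left-invariant fields on $G$, whereas the Molino-sheaf stalk corresponds to the right-invariant fields, whose bracket is the opposite one, giving $\fk{g}^-$; the classical commutation of left- with right-invariant fields is exactly the condition defining $\mbf{C}$. Transporting through the $K$-invariant pushforward yields $\fk{g}^-$ as the typical fiber of $\mbf{C}(M, \cl{F})$. The assertion that every global transverse field commutes with $\mbf{C}(M, \cl{F})$ is then essentially definitional: a global transverse field $\overline{Z}$ on $M$ lifts to a global transverse field $\overline{Z}_P$ on $P$ (Molino's lifting lemma), which commutes with every local section of $\mbf{C}(P, \cl{F}_P)$ by construction, and projecting back gives $[\overline{Z}, \overline{X}] = 0$ for each local section $\overline{X}$ of $\mbf{C}(M, \cl{F})$. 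Finally, if $\mbf{C}(M, \cl{F})$ is globally constant, its global sections form a copy of $\fk{g}^-$ consisting of global transverse fields; by the commutation just established these sections commute among themselves, so $\fk{g}^-$, and hence $\fk{g}$, is abelian.

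The step I expect to be the main obstacle is the precise identification of the commutant with the right-invariant fields via Fedida's developing map: one must verify both that the commutant is no \emph{larger} than $\fk{g}^-$ (which is where the local rigidity and finite-dimensionality from the first paragraph are doing the real work) and that the bracket orientation comes out opposite. Completeness is used essentially throughout — in Fedida's theorem itself, and to guarantee that the flows of commuting fields are globally defined so that the orbits are genuinely the full leaf closures rather than proper subsets of them.
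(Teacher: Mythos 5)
There is an important mismatch of expectations here: the paper contains \emph{no proof} of this statement. Theorem \ref{Molino-sheaf} is imported verbatim from Molino's book \cite{Mo88} (Theorem 5.2 there) and is used as a black box in Section \ref{Molino-theory}, so there is no internal argument to compare yours against; the only meaningful comparison is with Molino's original proof. Measured against that, your sketch reproduces the correct architecture: work on the lifted transversely parallelizable foliation $(P, \cl{F}_P)$, use the basic fibration $\rho: P \to W$ and Fedida's theorem (Theorem \ref{leaf-space}) on each leaf closure, identify the commutant of the transverse fields (the left-invariant fields on $G$) with the right-invariant fields, whence the typical fiber $\fk{g}^-$ and transitivity on closures, and descend through $\mbf{C}(M,\cl{F}) \cong \left(\pi_*\mbf{C}(P,\cl{F}_P)\right)^K$. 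Your treatment of the last two claims in (b) — commutation with global transverse fields via the lifting homomorphism \eqref{lifting-homo}, and abelianness of $\fk{g}$ when the sheaf is globally constant — is essentially complete.

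Two steps you rely on are genuine gaps rather than details, however. First, the assertion that ``a commuting field projects trivially under $\rho$'' (equivalently: sections of $\mbf{C}(P,\cl{F}_P)$ are tangent to the leaf closures, which is the inclusion ``orbits $\subseteq$ leaf closures'') is not definitional, and is where real work happens. Note that global transverse fields are in general \emph{not} tangent to leaf closures (think of a linear flow on $T^3$ whose leaf closures are $2$-tori), so tangency of the commutant cannot be inherited formally. A section $\xi$ of the commuting sheaf does descend to a local field $\bar{\xi}$ on $W$ commuting with the projections of all global transverse fields, and one must then prove $\bar{\xi}=0$; one argument uses that the global transverse fields form a module over basic functions, and basic functions of $(P,\cl{F}_P)$ are exactly pullbacks from $W$, so the projected fields contain all compactly supported fields on $W$, whose centralizer vanishes. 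Second, your local-constancy argument in the first paragraph only yields rigidity: commuting with a transverse frame makes a section determined by its value at one point, i.e.\ the restriction maps of the sheaf are injective and stalks embed in $\R^q$. Local constancy also requires local \emph{existence} — that every vector in the candidate fiber $\fk{g}^-$ is realized by a section on a neighbourhood of every point — and that is supplied precisely by the Fedida identification on a closure together with local triviality of $\rho$ (to thicken sections from a closure to a saturated neighbourhood in $P$), not by the rigidity estimate; so your first paragraph cannot stand independently of your second. A smaller inaccuracy: Fedida's theorem does not present the leaf closure, up to covering, as a product $\tilde{L} \times G$; the Darboux cover fibers over $G$, generally nontrivially, though nothing you use depends on it being a product.
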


We use Molino's sheaf to define Killing Riemannian foliations.
\begin{defn} \label{Killing}We say a Riemannian foliation $(M, \mathcal{F})$ is \textbf{Killing}, if its Molino sheaf $\mathbf{C}(M, \mathcal{F})$ is globally constant.
\end{defn}
\begin{ex}
  \label{ex:killing-foliations}
  Here are some examples of Killing foliations.
  \begin{itemize}
  \item If $M$ is simply-connected, then any Riemannian foliation $(M, \mathcal{F})$ must be a Killing foliation. This is also immediate from the definition of Killing foliation; see \cite[Proposition 5.5]{Mo88}. Its structure algebra is trivial.
  \item Recall the setting of Example \ref{ex:homogeneous-foliations}: $(M, g)$ is a compact Riemannian manifold, and the leaves of $\cl{F}$ are the orbits of a connected group of isometries. Then $(M, \cl{F})$ is a Killing foliation. Its structure algebra is $\fk{g} := \Lie(K)/\Lie(H)$, where $K = \overline{H}$.

    Specializing this example, we may take $(M, \alpha)$ to be a compact contact manifold, and impose the following compatibility conditions on the Riemannian metric $g$. Let $R$ be the Reeb vector field of associated to $\alpha$, defined by $\alpha(R) = 1$ and $\iota_R d\alpha = 0$. We require
      \begin{enumerate}[label=(\roman*)]
      \item the flow of $\R$ preserves the metric $g$, and
      \item $\ker \alpha \subseteq TM$ admits an almost complex structure $J$ such that
        \begin{equation*}
          g(X,Y) = d\alpha(X, JY) \text{ for all } X,Y \in \Gamma(\ker \alpha).
        \end{equation*}
      \end{enumerate}
      Under these conditions, we call $(M, \alpha, g)$ a \define{K-contact manifold}. These include \define{Sasakian manifolds}. We refer to \cite{BG08} for definitions and more details. The flow of the Reeb vector field $R$ induces a Killing foliation on $M$.
  \end{itemize}
\end{ex}

%We close this review by recalling two more results on a Riemannian foliation that will be sued in this work.
    Let $(M, \mathcal{F})$ be a complete Killing foliation. Then its structure Lie algebra $\mathfrak{g}$ is abelian; moreover, there is a natural transverse isometric $\mathfrak{g}$-action on $(M, \mathcal{F})$. The argument used in \cite[Proposition 3.3.4]{LS20} can be adapted to prove the following result.

\begin{prop}\label{tubular} Let $(M, \mathcal{F})$ be a complete Killing foliation, and let $X$ be a foliated closed submanifold of $M$.  Then $X$ admits a $\mathfrak{g}$-invariant tubular neighbourhood $\phi: NX \hookrightarrow M$.
\end{prop}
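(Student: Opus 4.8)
The plan is to adapt the classical equivariant tubular neighbourhood theorem, realising $\phi$ as a normal exponential map for a suitable complete bundle-like metric, and then to check that it is simultaneously foliate and $\mathfrak{g}$-equivariant. First, since $X$ is a $\mathfrak{g}$-invariant foliated submanifold, Example \ref{ex:lifted-foliation} equips $NX = TM|_X/TX \cong N\mathcal{F}/N\mathcal{F}_X$ with a transverse $\mathfrak{g}$-action for which the projection $NX \to X$ is equivariant; this is the structure that the embedding must intertwine with the transverse action on $M$.

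Next, I would fix a complete bundle-like metric $g_{TM}$, which exists because $(M,\mathcal{F})$ is complete, and identify $NX$ with the $g_{TM}$-orthogonal complement $(TX)^\perp \subseteq TM|_X$. Because $X$ is $\mathcal{F}$-saturated we have $T\mathcal{F}|_X \subseteq TX$, so $(TX)^\perp$ lies inside the horizontal distribution $N\mathcal{F} = (T\mathcal{F})^\perp$; thus every vector normal to $X$ is perpendicular to the leaves. The defining property of a bundle-like metric is that a geodesic orthogonal to the leaves at one point remains orthogonal to them, so the normal exponential map $\exp^\perp$ travels along horizontal geodesics, which is what makes it foliate: one checks that $\exp^\perp$ carries the leafwise-flow foliation $\mathcal{F}_{NX}$ of Example \ref{ex:lifted-foliation} to $\mathcal{F}$. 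Completeness of $g_{TM}$ ensures $\exp^\perp$ is defined on all of $(TX)^\perp$, and since $X$ is closed (hence properly embedded) the standard tubular neighbourhood theorem gives a diffeomorphism from a neighbourhood $\Omega$ of the zero section onto an open neighbourhood of $X$. Precomposing with a fibrewise diffeomorphism $NX \cong \Omega$ yields an embedding defined on all of $NX$ satisfying $\phi|_X = \id_X$ and $T_x\phi = \id_{N_xX}$.

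For equivariance, I would exploit that the transverse $\mathfrak{g}$-action is isometric: each representative $\xi_M$ of $a(\xi)$ is a foliate transverse Killing field, so its flow preserves the transverse metric and the transverse Levi-Civita connection. The $\mathfrak{g}$-equivariance required of $\phi$ is a condition on normal derivatives, i.e.\ purely transverse data, so it suffices that the normal derivative of $\exp^\perp$ intertwines the two transverse actions; this should follow because horizontal geodesics, and hence $\exp^\perp$ to the relevant transverse order, are determined by transverse-metric data that the flows preserve. Choosing the radius function cutting out $\Omega$ to be invariant (possible since the transverse action acts by transverse isometries and so preserves transverse distance to $X$) makes the fibrewise rescaling equivariant and the image $\phi(NX)$ a $\mathfrak{g}$-invariant open subset, so $\phi$ is the desired $\mathfrak{g}$-invariant tubular neighbourhood.

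I expect the main obstacle to be the mismatch between the \emph{transverse} data governing the conclusion and the \emph{full} metric $g_{TM}$ used to build $\exp^\perp$: a transverse Killing field preserves the transverse metric but need not preserve $g_{TM}$ itself, so the flows of the $\xi_M$ need not be genuine Riemannian isometries, and the equivariance of $\phi$ must be argued at the level of normal derivatives rather than deduced from metric isometries of $(M,g_{TM})$. Making this precise—verifying that $\exp^\perp$ is genuinely foliate and that its normal derivative intertwines the lifted and ambient transverse actions—is where the real work lies, and it is exactly this transverse bookkeeping for which the argument of \cite[Proposition 3.3.4]{LS20} supplies the template.
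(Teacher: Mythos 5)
Your proposal is correct and takes essentially the same approach as the paper: the paper offers no independent proof of this proposition, instead stating that the argument of \cite[Proposition 3.3.4]{LS20} can be adapted, and your construction (normal exponential map of a complete bundle-like metric, foliateness via Reinhart's characterization of bundle-like metrics, equivariance verified on normal derivatives using that the transverse $\fk{g}$-action is isometric) is precisely that adaptation. The difficulty you flag --- that the flows of the $\xi_M$ preserve only the transverse metric, not $g_{TM}$, so equivariance must be argued at the level of transverse data --- is exactly the point the cited template handles, so your outline matches the intended proof.
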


\subsection{Molino's sheaf via the holonomy pseudogroup}
\label{sec:molin-sheaf-pseud}

In this subsection, we review an alternative approach to Molino's sheaf through the holonomy pseudogroup $\Psi(M, \cl{F})$. It is relevant for the discussion in Subsection \ref{sec:comp-haefl-model}, and may be skipped on first reading. Let $\{\varphi_\alpha:U_\alpha \to \R^p \times \R^q\}$ be a foliated atlas of $(M, \cl{F})$. Set $s_\alpha := \pr_2 \circ \varphi_\alpha$. Then we have diffeomorphisms (denoting $U_{\alpha\beta} := U_\alpha \cap U_\beta$)
\begin{equation*}
  h_{\beta\alpha}:s_\alpha(U_{\alpha\beta}) \to s_\beta(U_{\alpha\beta})
\end{equation*}
such that $h_{\beta\alpha} \circ s_\alpha|_{U_{\alpha\beta}} = s_\beta|_{U_{\alpha\beta}}$. The data $\{(s_\alpha, U_\alpha, h_{\beta\alpha})\}$ is a Haefliger cocycle for $(M, \cl{F})$. Let $S_\alpha := s_\alpha(U_\alpha)$, and view $S := \bigsqcup_\alpha S_\alpha$ as a complete transversal to $\cl{F}$. A transverse metric $g$ for $(M, \cl{F})$ induces a Riemannian metric on $S$. The holonomy pseudogroup $\Psi_S(M,\cl{F})$ is generated by the $h_{\beta\alpha}$, and consists of local isometries of $S$.

\begin{defn}
  Let $\mathbf{C}(S, \Psi_S(M, \cl{F}))$ be the sheaf over $S$ defined as follows. Assign to each open $V \subseteq S$ the Lie algebra consisting of vector fields $\xi$ on $V$ such that: for each $y \in V$, there is a neighbourhood $V_y$ of $y$, and $\epsilon > 0$, such that $\exp(t\xi)$ is defined on all of $V_y$ whenever $|t| < \epsilon$, and $\exp(t\xi) \in \ol{\Psi_S(M, \cl{F})}$.
\end{defn}

For the closure of a pseudogroup, see \cite{H88}. We now work to show that $\mathbf{C}(M, \cl{F})$ is, in the appropriate sense, a pullback of $\mathbf{C}(S, \Psi_S(M, \cl{F}))$ to $M$.  Let $\mbf{C}_\alpha$ be the inverse image of the sheaf $\mbf{C}(S, \Psi_S(M, \cl{F}))|_{S_\alpha}$ along $s_\alpha$. This is a sheaf on $U_\alpha$, given explicitly by assigning to each $U \subseteq U_\alpha$ the set $\mbf{C}(S, \Psi_S(M, \cl{F}))(s_\alpha(U))$ (because $s_\alpha$ is an open map). Moreover, for each $\alpha, \beta$, we have a family of isomorphisms of sheaves
 \begin{equation*}
   \eta_{\beta\alpha}: \mbf{C}_\alpha|_{U_{\alpha\beta}} \to \mbf{C}_\beta|_{U_{\alpha \beta}}
 \end{equation*}
 satisfying the cocycle condition. Indeed, given $U \subseteq U_{\alpha \beta}$, we define $(\eta_{\beta \alpha})_U: \mbf{C}_\alpha(U) \to \mbf{C}_\beta(U)$ to be the map $\xi \mapsto (h_{\beta \alpha})_*\xi$. To check this has the desired properties, first note that if $\xi$ is defined on $s_\alpha(U)$, then since $h_{\beta \alpha}$ restricts to a diffeomorphism $s_\alpha(U) \to s_\beta(U)$, we have that $(h_{\beta \alpha})_*\xi$ is defined on $s_\beta(U)$, and we have an inverse given by $(h_{\alpha \beta})_*$. A cocycle condition for $(\eta_{\beta \alpha})$ follows from that for the Haefliger cocycle. We may therefore glue the sheaves $\mbf{C}_\alpha$ along the cocycle $(\eta_{\beta \alpha})$, to obtain a sheaf $\mbf{C}$ on $M$. Explicitly, the sheaf $\mbf{C}$ assigns to each open $U \subseteq M$ the set
 \begin{equation*}
   \{(\xi_\alpha) \in \bigsqcup_\alpha \mbf{C}_\alpha(U \cap U_\alpha) \mid (h_{\beta \alpha})_*\xi_\alpha|_{s_\alpha(U \cap U_{\alpha \beta})} = \xi_\beta|_{s_\beta(U \cap U_{\alpha \beta})}\}. 
 \end{equation*} 

 \begin{lem}\label{lem:3}
   The sheaves $\mbf{C}(M,\cl{F})$ and $\mbf{C}$ are isomorphic.
 \end{lem}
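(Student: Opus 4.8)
The plan is to build the isomorphism chart by chart and then glue. For each $\alpha$, the submersion $s_\alpha\colon U_\alpha \to S_\alpha$ exhibits $(U_\alpha, \cl{F}|_{U_\alpha})$ as a simple foliation with leaf space $S_\alpha$, so pushing forward foliate representatives gives a Lie algebra isomorphism $(s_\alpha)_*\colon \mathfrak{X}(U_\alpha, \cl{F}|_{U_\alpha}) \to \mathfrak{X}(S_\alpha)$, which is well defined on transverse classes since leafwise fields descend to zero. Because the transverse metric $g$ descends to the induced metric on $S_\alpha$, this restricts to an isomorphism $\Iso(U_\alpha, \cl{F}|_{U_\alpha}, g|_{U_\alpha}) \cong \{\text{Killing fields of } S_\alpha\}$, and more generally to an isomorphism $\Theta_\alpha$ from the sheaf of germs of transverse Killing fields over $U_\alpha$ onto $s_\alpha^{-1}(\text{Killing sheaf of } S_\alpha)$. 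I would set up these $\Theta_\alpha$ first, before restricting attention to the Molino subsheaves.

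The heart of the proof, and the step I expect to be the main obstacle, is to show that $\Theta_\alpha$ carries Molino's subsheaf onto $\mbf{C}_\alpha$: for $V \subseteq U_\alpha$ and a transverse Killing field $\ol{X}$ on $V$ with image $\xi = (s_\alpha)_* \ol{X}$, one has $\ol{X} \in \mbf{C}(V, \cl{F}|_V)$ if and only if $\exp(t\xi) \in \ol{\Psi_S(M,\cl{F})}$ for small $t$ near each point, i.e.\ $\xi \in \mbf{C}(S, \Psi_S(M,\cl{F}))(s_\alpha(V))$. The key point is that the transverse flow $\exp(t\ol{X})$ is, on the transversal, exactly the local one-parameter group $\exp(t\xi)$ of isometries, and that these one-parameter groups realize the transverse holonomy of $\cl{F}$. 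Molino's characterization (Theorem \ref{Molino-sheaf}) identifies $\mbf{C}(M,\cl{F})$ as the sheaf whose germs commute with all global transverse fields and whose orbits are the leaf closures, while Haefliger's closure $\ol{\Psi_S(M,\cl{F})}$ is the Lie pseudogroup whose orbits are the traces of those same leaf closures on $S$. I would bridge the two by invoking the correspondence between the Molino sheaf and the central/Killing sheaf of the holonomy pseudogroup, due to Molino \cite{Mo88} and Haefliger \cite{H88}: the germs of $\mbf{C}(M,\cl{F})$ are precisely the infinitesimal generators of the one-parameter subgroups of $\ol{\Psi_S(M,\cl{F})}$, transported to $M$ via $s_\alpha$. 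Concretely I would verify the two inclusions by flowing: if $\ol{X} \in \mbf{C}$, its flow preserves every leaf closure and commutes with the transverse parallelism on $P$, forcing $\exp(t\xi)$ into $\ol{\Psi_S}$; conversely, a field whose flow lies in $\ol{\Psi_S}$ commutes with the holonomy and hence lifts via $\pi^\sharp$ to a germ commuting with the global transverse fields on $P$.

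With the local statement in hand, the gluing is essentially formal. On an overlap $U_{\alpha\beta}$, the identity $h_{\beta\alpha} \circ s_\alpha = s_\beta$ gives $(s_\beta)_* \ol{X} = (h_{\beta\alpha})_* (s_\alpha)_* \ol{X}$ for every transverse field $\ol{X}$, which is exactly the relation $\eta_{\beta\alpha} \circ \Theta_\alpha = \Theta_\beta$, where $\eta_{\beta\alpha}$ is the gluing cocycle $\xi \mapsto (h_{\beta\alpha})_* \xi$ used to build $\mbf{C}$. Thus the $\Theta_\alpha$ are compatible with the descent data assembling $\mbf{C}$ from the $\mbf{C}_\alpha$, and since they restrict to the Molino subsheaves by the previous paragraph, they glue to a single isomorphism $\mbf{C}(M,\cl{F}) \xrightarrow{\;\cong\;} \mbf{C}$. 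As a final consistency check I would note that both sheaves are locally constant with typical fiber $\fk{g}^-$, by Theorem \ref{Molino-sheaf} and Haefliger's theory, confirming that the fibrewise map is an isomorphism and not merely a monomorphism.
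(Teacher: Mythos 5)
Your proposal is correct and takes essentially the same approach as the paper: the paper's proof consists only of defining the map $\mbf{I}_U \colon X \mapsto ((s_\alpha)_*(X|_{U \cap U_\alpha}))$, which is exactly your glued family of chart-wise pushforwards $\Theta_\alpha = (s_\alpha)_*$. The verifications you supply---the Molino--Haefliger identification of the commuting sheaf with the infinitesimal generators of one-parameter subgroups of $\ol{\Psi_S(M,\cl{F})}$, and the cocycle compatibility $\eta_{\beta\alpha} \circ \Theta_\alpha = \Theta_\beta$ coming from $h_{\beta\alpha} \circ s_\alpha = s_\beta$---are precisely what the paper leaves implicit.
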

 \begin{proof}
   The isomorphism $\mbf{I}:\mbf{C}(M, \cl{F}) \to \mbf{C}$ is defined as follows: for each open $U \subseteq M$, take
   \begin{equation*}
     \mbf{I}_U :\mbf{C}(M, \cl{F})(U) \to \mbf{C}(U), \quad X \mapsto ((s_\alpha)_*(X|_{U \cap U_\alpha})).
   \end{equation*}
 \end{proof}
Since $\mbf{C}$ is locally given by pullbacks of $\mbf{C}(S, \Psi_S(M, \cl{F}))$, Lemma \ref{lem:3} implies that the stalk $\mbf{C}(M, \cl{F})_x$ is isomorphic to the stalk $\mbf{C}(S, \Psi_S(M, \cl{F}))_x$. There is an action of $(\ol{\Psi_S(M, \cl{F})})_x$ (the isotropy group at $x$ of the closure) on the stalk $\mbf{C}(S, \Psi_S(M, \cl{F}))_x$, given by:
 \begin{equation}\label{eq:action-on-stalk}
   h \cdot \germ_x \xi := \text{ the germ at }x \text{ of } \frac{d}{dt} \Big|_{t=0} h \circ \exp(t\xi) \circ h^{-1}.
 \end{equation}
Therefore, the isotropy group also acts on the stalk $\mbf{C}(M, \cl{F})_x$.

\section{The leaf space of a developable foliation}
\label{sec:leaf-space-devel}
One of the key observations we will make in our study of Killing foliations in the next section is that they are locally developable. Here we gather some results on the transverse geometry of developable foliations. We will end by introducing complete Lie-$\fk{g}$ foliations and Fedida's theorem, which states that such foliations are developable. We take the definition of developable from \cite[Page 161]{MM03}. 
\begin{defn}
  \label{def:developable}
  A foliation $(X, \cl{F})$ is \define{developable} if, for some cover $p:\hat{X} \to X$ of $X$, the pullback foliation $\hat{\cl{F}} := p^{-1}(\cl{F})$ is simple. Recall this means that the leaf space $M := \hat{X}/\hat{\cl{F}}$ is a manifold, and the quotient map $q:\hat{X} \to M$ is a surjective submersion whose fibers are the leaves of $\hat{\cl{F}}$. We call $q$ a \define{developing map} for $\cl{F}$. 
\end{defn}
Since to be developable depends on the existence of the cover $p$, we always equip developable foliations with the cover $p$ satisfying Definition \ref{def:developable}. We will denote the group of deck transformations of $p$ by $\Gamma$, and take $\Gamma$ to act on $\hat{X}$ from the left.

\begin{lem}
     \label{lem:gamma-acts}
     Let $(X, \cl{F}, p)$ be a developable foliation. The group $\Gamma$ acts smoothly on the leaf space $M$, and the developing map $q:\hat{X} \to M$ is $\Gamma$-equivariant.
   \end{lem}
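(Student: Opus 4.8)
The plan is to show that a deck transformation $\gamma \in \Gamma$ permutes the leaves of the pullback foliation $\hat{\cl{F}}$, and hence descends to a map on the leaf space $M = \hat{X}/\hat{\cl{F}}$. The key fact is that $p:\hat{X} \to X$ is a local diffeomorphism and, by definition of the pullback foliation $\hat{\cl{F}} = p^{-1}(\cl{F})$, satisfies $p^{-1}(L)$ being a union of leaves of $\hat{\cl{F}}$ for each leaf $L$ of $\cl{F}$. Since a deck transformation satisfies $p \circ \gamma = p$, it carries fibers of $p$ to fibers of $p$, and because it is a diffeomorphism of $\hat{X}$ preserving the foliation $\hat{\cl{F}}$ (as $\hat{\cl{F}}$ is pulled back along the $\gamma$-invariant map $p$), it must permute the leaves of $\hat{\cl{F}}$.

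First I would verify carefully that each $\gamma$ preserves $\hat{\cl{F}}$: given a leaf $\hat{L}$ of $\hat{\cl{F}}$, its image $p(\hat{L})$ lies in a single leaf $L$ of $\cl{F}$, so $\gamma(\hat{L}) \subseteq p^{-1}(L)$, which is $\hat{\cl{F}}$-saturated; since $\gamma$ is a diffeomorphism it sends the connected immersed submanifold $\hat{L}$ to a connected immersed submanifold of the correct dimension inside $p^{-1}(L)$, hence into a single leaf. Thus $\gamma$ induces a well-defined bijection $\bar{\gamma}$ of the leaf space $M$, characterized by $\bar{\gamma} \circ q = q \circ \gamma$, where $q:\hat{X} \to M$ is the developing map. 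This relation is exactly the asserted $\Gamma$-equivariance of $q$.

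Next I would check smoothness of the $\Gamma$-action, i.e. that each $\bar{\gamma}:M \to M$ is smooth (its inverse being $\overline{\gamma^{-1}}$, smoothness of the inverse comes for free, making each $\bar{\gamma}$ a diffeomorphism). Since $q$ is a surjective submersion, a map out of $M$ is smooth if and only if its precomposition with $q$ is smooth; here $\bar{\gamma} \circ q = q \circ \gamma$ is a composite of smooth maps, so $\bar{\gamma}$ is smooth by the universal property of the submersion $q$. That $\gamma \mapsto \bar{\gamma}$ is a group homomorphism follows from uniqueness of the descended map: $\overline{\gamma_1 \gamma_2} \circ q = q \circ \gamma_1 \gamma_2 = \bar{\gamma_1} \circ q \circ \gamma_2 = \bar{\gamma_1} \circ \bar{\gamma_2} \circ q$, and surjectivity of $q$ gives $\overline{\gamma_1\gamma_2} = \bar{\gamma_1}\circ \bar{\gamma_2}$.

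The main obstacle, though largely bookkeeping, is pinning down that $\gamma$ genuinely sends a single leaf of $\hat{\cl{F}}$ into a single leaf rather than merely into the saturated set $p^{-1}(L)$: this requires that the leaf-preserving property be deduced from $\gamma$ being foliation-preserving, which in turn rests on $\hat{\cl{F}}$ being the pullback along the $\gamma$-equivariant (indeed $\gamma$-invariant) covering map $p$. The cleanest way to make this precise is to observe that $\gamma_* (T\hat{\cl{F}}) = T\hat{\cl{F}}$, because $T\hat{\cl{F}} = (dp)^{-1}(T\cl{F})$ and $p \circ \gamma = p$ forces $dp \circ d\gamma = dp$; a diffeomorphism preserving the involutive distribution $T\hat{\cl{F}}$ necessarily permutes its maximal integral submanifolds, which are precisely the leaves. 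Once this is established, the descent and smoothness arguments are formal consequences of $q$ being a surjective submersion with connected fibers.
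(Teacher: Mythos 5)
Your proof is correct and takes essentially the same route as the paper: the paper's entire proof is the one-line observation that the $\Gamma$-action preserves the fibers of $p$ (equivalently $p \circ \gamma = p$), and your argument fills in exactly why this forces each $\gamma$ to permute the leaves of $\hat{\cl{F}}$ and why the induced map on $M$ is smooth via the surjective submersion $q$. The care you take to distinguish ``maps a leaf into a single leaf'' from ``maps a leaf into the saturated set $p^{-1}(L)$'' is precisely the bookkeeping the paper leaves implicit.
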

   \begin{proof}
    This is a consequence of the fact that the $\Gamma$ action on $\hat{X}$ preserves the fibers of $p$.
  \end{proof}
  % \begin{ex}
  %   \label{ex:suspension}
  %   Let $B$ and $M$ be connected manifolds, and denote $\pi_1(B)$ by $\Gamma$. Suppose $\Gamma$ acts on $M$, with action homomorphism $h:\Gamma \to \Diff(M)$. The foliation given by the \define{suspension} of $h$, which we now describe, is developable. Let $\hat{B}$ denote the universal cover of $B$, and let $\Gamma$ act on $\hat{X} := M \times \hat{B}$ by
  %   \begin{equation*}
  %     \gamma \cdot (m, \hat{b}) := (\gamma \cdot m, \gamma \cdot \hat{b}).
  %   \end{equation*}
  %   The quotient $X:= \hat{X}/\Gamma$ is a smooth manifold, and the quotient map $p:\hat{X} \to X$ is a covering map with group of deck transformations $\Gamma$. Furthermore, the fibers of the first projection $q:\hat{X} \to M$ give a simple foliation $(\hat{X}, \hat{\cl{F}})$, and this foliation is the pullback of the foliation $(X, \cl{F})$ defined by projecting the leaves of $\hat{\cl{F}}$ along the quotient $p$. Therefore $(X, \cl{F}, p)$ is a developable foliation, with developing map $q$.
  % \end{ex}

  We will describe how the action of $\Gamma$ on $M$ captures the transverse geometry of $(X, \cl{F})$. We take two approaches, first through diffeology, then through Lie groupoids. Diffeologically, we can compare the leaf space $X/\cl{F}$ to the quotient space $M/\Gamma$ directly.

   \begin{prop}
  \label{prop:develop-leaf-space}
  If $(X, \cl{F}, p)$ is a developable foliation, and $p$ is a Galois cover, then there is a unique diffeological diffeomorphism $\Phi:X/\cl{F} \to M/\Gamma$ making the following commute:
  \begin{equation}\label{eq:2}
    \begin{tikzcd}
      & \hat{X} \ar[dl, "p"'] \ar[dr, "q"] & \\
      X \ar[d, "\pi_{\cl{F}}"'] & & M \ar[d, "\pi_{\Gamma}"] \\
      X/\cl{F} \ar[rr, "\Phi"] & & M/\Gamma.
    \end{tikzcd}
  \end{equation}
\end{prop}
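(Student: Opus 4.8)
The plan is to define $\Phi$ by lifting through the cover $p$ and the developing map $q$, and then to verify that it is a well-defined bijection, after which smoothness in both directions follows formally from the universal property of the quotient diffeology. Concretely, I would define $\Phi$ on a leaf $[x] \in X/\cl{F}$ by choosing a point $x$ in the leaf, choosing any lift $\hat{x} \in p^{-1}(x) \subseteq \hat{X}$, and setting $\Phi([x]) := \pi_\Gamma(q(\hat{x}))$; with this definition the diagram \eqref{eq:2} commutes tautologically, since $\Phi \circ \pi_{\cl{F}} \circ p = \pi_\Gamma \circ q$ by construction. The real content is well-definedness. Independence of the chosen lift uses the Galois hypothesis: any two lifts $\hat{x}, \hat{x}'$ of $x$ differ by a deck transformation $\gamma \in \Gamma$, and by the $\Gamma$-equivariance of $q$ (Lemma \ref{lem:gamma-acts}) we get $q(\hat{x}') = q(\gamma \cdot \hat{x}) = \gamma \cdot q(\hat{x})$, which has the same image under $\pi_\Gamma$. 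Independence of the representative $x$ within its leaf $L$ uses that $p$ restricts to a covering map $\hat{L} \to L$ of leaves: lifting a leafwise path from $x$ to another point $x' \in L$ produces a lift $\hat{x}'$ of $x'$ lying in the same leaf $\hat{L}$ of $\hat{\cl{F}}$ as $\hat{x}$, whence $q(\hat{x}) = q(\hat{x}')$ because $q$ collapses the leaves of $\hat{\cl{F}}$.

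Next I would establish that $\Phi$ is a bijection. Surjectivity is immediate, since $\pi_\Gamma \circ q$ is surjective and factors through $\Phi$. For injectivity, suppose $\pi_\Gamma(q(\hat{x})) = \pi_\Gamma(q(\hat{x}'))$ for lifts of $x$ and $x'$; then $q(\hat{x}') = \gamma \cdot q(\hat{x}) = q(\gamma \cdot \hat{x})$ for some $\gamma \in \Gamma$, so $\hat{x}'$ and $\gamma \cdot \hat{x}$ lie in a common leaf of $\hat{\cl{F}}$ (the fibres of $q$ being exactly the leaves). Applying $p$, which sends leaves of $\hat{\cl{F}}$ into leaves of $\cl{F}$ and satisfies $p \circ \gamma = p$, we conclude that $x' = p(\hat{x}')$ and $x = p(\gamma \cdot \hat{x})$ lie in a common leaf of $\cl{F}$, i.e. $[x] = [x']$. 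Uniqueness of $\Phi$ follows because $\pi_{\cl{F}} \circ p$ is surjective and hence right-cancellable in $\mathbf{Set}$: any two maps completing \eqref{eq:2} agree after precomposition with $\pi_{\cl{F}} \circ p$, and therefore agree.

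Finally, smoothness of $\Phi$ and of its inverse is formal. The quotient maps $\pi_{\cl{F}}$ and $\pi_\Gamma$ are subductions, while $p$ is a surjective local diffeomorphism and $q$ a surjective submersion, so both are subductions as well; each enjoys the universal property that a map out of its target is smooth precisely when its precomposition with that subduction is smooth. Thus $\Phi$ is smooth iff $\Phi \circ \pi_{\cl{F}}$ is smooth iff $(\Phi \circ \pi_{\cl{F}}) \circ p = \pi_\Gamma \circ q$ is smooth, which holds as $q$ and $\pi_\Gamma$ are smooth; symmetrically, $\Phi^{-1}$ is smooth iff $\Phi^{-1} \circ \pi_\Gamma$ is smooth iff $(\Phi^{-1} \circ \pi_\Gamma) \circ q = \pi_{\cl{F}} \circ p$ is smooth, which again holds. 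I expect the main obstacle to be the bijectivity argument, where the Galois hypothesis and the leaf structure of $\hat{\cl{F}}$ must be combined carefully: one must track that deck transformations relating lifts translate, via the equivariance of $q$ and the fact that $p$ carries $\hat{\cl{F}}$-leaves into $\cl{F}$-leaves, exactly into the $\Gamma$-orbit relation on $M$. The smoothness step, by contrast, is a routine application of the diffeological quotient property.
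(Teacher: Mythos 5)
Your proposal is correct and follows essentially the same route as the paper's proof: define $\Phi$ via lifts, prove well-definedness by lifting leafwise paths and invoking the Galois property plus $\Gamma$-equivariance of $q$, and get smoothness formally from the subduction property of the quotient maps. The only cosmetic difference is that you prove injectivity directly where the paper exhibits the inverse map and checks it is well-defined, but the underlying argument is identical.
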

We will use $L$ to denote a leaf of $\cl{F}$, use $L_x$ to denote the leaf through $x$, and use $[m]$ to denote the $\Gamma$-orbit of $m \in M$.
\begin{proof}
    Both $\pi_{\cl{F}} \circ p$ and $\pi_\Gamma \circ q$ are subductions (the diffeological version of submersion, see \cite{IZ13}), so any $\Phi$ completing \eqref{eq:2} is smooth, and if it is invertible, it is a diffeomorphism. Uniqueness follows from surjectivity of $\pi_{\cl{F}} \circ p$. Necessarily, we must have
  \begin{equation*}
    \Phi(L_{p(\hat{x})}) := [q(\hat{x})].
  \end{equation*}

  To show $\Phi$ is well-defined, suppose that $\hat{x}, \hat{y} \in \hat{X}$ are such that $x:= p(\hat{x})$ and $y := p(\hat{y})$ are in the same leaf $L$ of $\cl{F}$. We require that $q(\hat{x})$ and $q(\hat{y})$ are in the same $\Gamma$-orbit. Choose a path $\lambda$ from $x$ to $y$ that stays in $L$. Because $p:\hat{X} \to X$ is a cover, $\lambda$ lifts to a unique path $\hat{\lambda}$ starting at $\hat{x}$ and ending at some $\hat{z}$, with $p(\hat{z}) = y$. The path $\hat{\lambda}$ is contained in a connected component of $p^{-1}(L)$, which is precisely a leaf of $\hat{\cl{F}}$. Therefore $\hat{x}$ and $\hat{z}$ are in the same leaf of $\hat{\cl{F}}$. But the leaves of $\hat{\cl{F}}$ are also the fibers of $q$, so $q(\hat{x}) = q(\hat{z})$. On the other hand, $p:\hat{X} \to X$ is a Galois cover with deck transformation group $\Gamma$, and $p(\hat{y}) = p(\hat{z}) = y$, so there is some $\gamma \in \Gamma$ with $\gamma \cdot \hat{z} = \hat{y}$. The $\Gamma$-equivariance of $q$ implies
  \begin{equation*}
    \gamma \cdot q(\hat{x}) = \gamma \cdot q(\hat{z}) = q(\gamma \cdot \hat{z}) = q(\hat{y}),
  \end{equation*}
  as desired.

  To see $\Phi$ is invertible, observe that its inverse must be given by: for $\hat{x} \in \hat{X}$,
  \begin{equation*}
    \Phi^{-1}([q(\hat{x})]) = L_{p(\hat{x})}.
  \end{equation*}
  To see this is well-defined, suppose that $\hat{x}, \hat{y} \in \hat{X}$ are such that $\gamma \cdot q(\hat{x}) = q(\hat{y})$ for some $\gamma \in \Gamma$. We must show $p(\hat{x})$ and $p(\hat{y})$ are in the same leaf. By $\Gamma$-equivariance of $q$, we find that $\gamma \cdot \hat{x}$ and $\hat{y}$ are in the same fiber of $q$, or equivalently, the same leaf of $\hat{\cl{F}}$. Therefore their images under $p$ are in the same leaf of $\cl{F}$. But $p$ is $\Gamma$-invariant, so $p(\gamma \cdot \hat{x}) = p(\hat{x})$, and this observation completes the proof. 
\end{proof}

Before describing the Lie groupoid approach to the transverse geometry of $(X, \cl{F}, p)$, we briefly review Lie groupoids. 

  \subsection{Review of Lie groupoids}
\label{sec:review-lie-groupoids}

A \define{Lie groupoid} is a category $G \rra G_0$, which we may denote by $G$, satisfying two conditions:
\begin{itemize}
\item (Lie) the arrow space $G$ is a not necessarily Hausdorff nor second-countable manifold, the base $G_0$ is a manifold, all the structure maps are smooth, and moreover the source $s$ and target $t$ are surjective submersions with Hausdorff fibers;
\item (groupoid) every arrow has an inverse.
\end{itemize}
Given a Lie groupoid $G \rra G_0$, the \define{orbit} $\cl{O}_x$ of $G$ through $x \in G_0$ is the subset $\cl{O}_x := t(s^{-1}(x))$. Each orbit is a weakly-embedded submanifold\footnote{A set, equipped with the subset diffeology, that is locally diffeomorphic to $\R^n$ and whose inclusion map is an immersion. The smooth structure on $\cl{O}_x$ is therefore entirely determined by $M$.} of $G_0$. The group $G_x := s^{-1}(x) \cap t^{-1}(x)$ is the \define{isotropy group} at $x$, and it is naturally a Lie group.

If $\dim G = \dim G_0$, we call the Lie groupoid $G$ \define{\'{e}tale}. \'{E}tale groupoids are intimately related to pseudogroups. Given a pseudogroup $\Psi$ on $M$, we may form the \'{e}tale germ groupoid, defined in Example \ref{ex:three-groupoids} below. Conversely, to an \'{e}tale groupoid $G$, we associate the pseudogroup $\Psi(G)$ on $G_0$ generated by transitions of the form $t \circ \sigma$, where $\sigma$ is a local inverse of $s$.

We will encounter the following Lie groupoids.
\begin{ex}\
  \label{ex:three-groupoids}
  \begin{itemize}
  \item  Given an action of a Lie group $\Gamma$ on $M$, we form the \define{action groupoid} $\Gamma \ltimes M$. Its base space is $M$ and its arrow space is $\Gamma \times M$. A pair $(\gamma, m)$ is an arrow $m \xar{(\gamma, m)} \gamma \cdot m$. Multiplication is given by $(\gamma', m') \cdot (\gamma, m) = (\gamma' \gamma, m)$. If $\Gamma$ is a discrete Lie group, then $\Gamma\ltimes M$ is \'{e}tale. The orbits of an action groupoid are the orbits of the action.

    \item Given a foliation $(M, \cl{F})$, we form the \define{holonomy groupoid} $\Hol(\cl{F})$. This has base space $M$ and arrow space
    \begin{equation*}
      \Hol(\cl{F}) := \{\text{holonomy classes of } \lambda \mid \lambda \text{ is a leafwise path}\}.
    \end{equation*}
    Each holonomy class $[\lambda]$ is an arrow $\lambda(0) \xar{[\lambda]} \lambda(1)$, and multiplication is given by concatenation. The set $\Hol(\cl{F})$ admits a manifold structure (see \cite[Proposition 5.6]{MM03}), but it may not be Hausdorff. The orbits of a holonomy groupoid $\Hol(\cl{F})$ are the leaves of $\cl{F}$.

    If $S$ is a complete transversal to $\cl{F}$, we can form the groupoid $\Hol_S(M, \cl{F}) \rra S$, whose arrow space consists of arrows in $\Hol(\cl{F})$ with source and target in $S$. This is an example of a pullback groupoid (see Example \ref{ex:pullback-groupoid}). It is an \'{e}tale Lie groupoid, and $\Psi(\Hol_S(\cl{F}))$ is the holonomy pseudogroup $\Psi_S(M, \cl{F})$.

  \item Given a pseudogroup $\Psi$ on a manifold $M$, we form the \define{germ groupoid} $\Gamma(\Psi)$. This has base space $M$ and arrow space
    \begin{equation*}
      \Gamma(\Psi) := \{\germ_x\psi \mid x \in \dom \psi \text{ and } \psi \in \Psi\}.
    \end{equation*}
    The arrow $\germ_x\psi$ is an arrow $x \xar{\germ_x\psi} \psi(x)$. The smooth structure on $\Gamma(\Psi)$ is given by the charts $x \mapsto \germ_x\psi$. The arrow space $\Gamma(\Psi)$ need not be Hausdorff nor second-countable. This is an \'{e}tale Lie groupoid.
  \end{itemize}
\end{ex}

The transverse geometry of a Lie groupoid is encoded in its Morita equivalence class. There are many ways to describe Morita equivalences. We choose the one from \cite{HF2019}.

\begin{defn}
  A \define{Morita map} $\phi:G \to H$ is a smooth functor such that:
  \begin{itemize}
  \item it descends to a homeomorphism $\overline{\phi}:G_0/G \to H_0/H$;
  \item the induced homomorphism $\phi_x:G_x \to H_{\phi(x)}$ is an isomorphism for all $x$;
    \item the normal derivative $N_x\phi:N_x\cl{O}_x \to N_{\phi(x)}\cl{O}_{\phi(x)}$ is an isomorphism.
    \end{itemize}
    Two Lie groupoids $G$ and $H$ are \define{Morita equivalent} if there is a third Lie groupoid $K$, and Morita maps $K \to G$ and $K \to H$. 
  \end{defn}
This notion of Morita equivalence agrees with the others found in the literature. For a proof, see \cite[Proposition 6.1.1]{HF2019}.
  
\begin{ex}
  \label{ex:pullback-groupoid}
  Given a Lie groupoid $G \rra G_0$ and a map $p:M \to G_0$, we may form the pullback (not necessarily Lie) groupoid $p^*G$, whose arrow space consists of triples $(y,g,x)$ where $g \in G$ is an arrow from $p(x)$ to $p(y)$. Whenever the map
  \begin{equation*}
    t \circ \pr_1 : G \fiber{s}{p} M \to G_0, \quad (g, m) \mapsto t(g)
  \end{equation*}
  is a submersion, the groupoid $p^*G$ is a Lie groupoid. Whenever this map is additionally surjective (e.g.\ if $p$ is a surjective submersion), the natural functor $p: p^*G \to G$,
  \begin{equation*}
    m \mapsto p(m), \quad (y,g,x) \mapsto g
  \end{equation*}
  is a Morita map.
\end{ex}

\subsection{The Morita class of the holonomy groupoid }
\label{sec:morita-class-holon}

Given a developable foliation $(X, \cl{F}, p)$, we will describe the Morita equivalence class of its holonomy groupoid $\Hol(\cl{F})$ in terms of the $\Gamma$ action on $M$.
% We may initially guess that the holonomy groupoid $\Hol(\cl{F})$ is Morita equivalent to the action groupoid $M \rtimes \Gamma$. However, this is false even in very simple examples. Consider $X$ non-simply-connected, equipped with the foliation $\cl{F}$ consisting of one leaf. Let $p:\hat{X} \to X$ any cover of $X$ with more than one sheet. The pullback of $\cl{F}$ is the foliation of $\hat{X}$ with one leaf, and therefore $M = \hat{X}/\hat{\cl{F}}$ is trivial. However, in this case the holonomy groupoid $\Hol(\cl{F})$ is the pair groupoid $X \times X \rra X$, which is Morita equivalent to the trivial groupoid $* \rra *$, and is not Morita equivalent to the action groupoid $* \rtimes \Gamma \rra *$: the isotropy groups of $*$ are not isomorphic.
% Thankfully, the only major obstruction to a Morita equivalence is the fact $\Gamma$ may act non-effectively on $M$ (and this is indeed the case in the example we just described).
Let $h:\Gamma \to \Diff(M)$ denote the action homomorphism. Let $G \rra M$ be the germ groupoid of the pseudogroup generated by $h(\Gamma) \leq \Diff(M)$. Arrows of $G$ are of the form $\germ_m h(\gamma)$.

\begin{prop}
  \label{prop:morita-to-pseudogroup}
  If $(X, \cl{F}, p)$ is a developable foliation, and $p$ is a Galois cover, then the holonomy groupoid $\Hol(\cl{F})$ is Morita equivalent to $G$, the germ groupoid of the pseudogroup generated by the $\Gamma$ action on $M$.
\end{prop}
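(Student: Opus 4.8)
The plan is to realize both $\Hol(\cl{F})$ and $G$ as Morita equivalent to a single Lie groupoid living over the cover $\hat{X}$, namely the pullback $K := p^*\Hol(\cl{F})$. Since $p:\hat{X} \to X$ is a covering, it is a surjective submersion, so by Example \ref{ex:pullback-groupoid} the groupoid $K \rra \hat{X}$ is a Lie groupoid and the canonical functor $p:K \to \Hol(\cl{F})$ is a Morita map. Likewise, the developing map $q:\hat{X} \to M$ is a surjective submersion and $G$ is \'{e}tale, so $q^*G \rra \hat{X}$ is a Lie groupoid and $q:q^*G \to G$ is a Morita map. Thus it suffices to produce an isomorphism of Lie groupoids $\Theta:K \xrightarrow{\cong} q^*G$ over $\hat{X}$: then $K$, together with the Morita maps $p:K \to \Hol(\cl{F})$ and $q \circ \Theta:K \to G$, witnesses the desired Morita equivalence.

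To build $\Theta$, recall that an arrow of $K$ is a triple $(\hat{y}, H, \hat{x})$ where $H$ is a holonomy germ of $\cl{F}$ carrying $p(\hat{x})$ to $p(\hat{y})$ (we use that a holonomy class is exactly a holonomy germ). Since $p$ is a local diffeomorphism, $H$ lifts to a germ $\widehat{H}$ of a holonomy transition of $\hat{\cl{F}}$ at $\hat{x}$; set $\hat{z} := \widehat{H}(\hat{x})$, a point over $p(\hat{y})$. Because $p$ is Galois with deck group $\Gamma$, there is a unique $\delta \in \Gamma$ with $\delta \cdot \hat{z} = \hat{y}$, and I define $\Theta(\hat{y}, H, \hat{x}) := (\hat{y}, \germ_{q(\hat{x})} h(\delta), \hat{x})$. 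This lands in $q^*G$: as $\hat{\cl{F}}$ is simple, the transition $\widehat{H}$ preserves the fibres of $q$, so $q \circ \widehat{H} = q$ near $\hat{x}$, and by the equivariance of $q$ (Lemma \ref{lem:gamma-acts}) we get $q \circ (\delta \circ \widehat{H}) = h(\delta) \circ q$; hence the transition $\delta \circ \widehat{H}$ of $\hat{X}$ descends along $q$ to $\germ_{q(\hat{x})} h(\delta)$, an arrow of $G$. A short bookkeeping of how lifts and deck transformations compose shows $\Theta$ is a functor, and it is a local diffeomorphism on arrows because $\delta$ depends locally constantly on the arrow. For the inverse, given $(\hat{y}, \germ_{q(\hat{x})} h(\gamma), \hat{x})$ in $q^*G$ one has $q(\gamma^{-1}\hat{y}) = q(\hat{x})$, so $\hat{x}$ and $\gamma^{-1}\hat{y}$ lie in one leaf of $\hat{\cl{F}}$; any leafwise path between them projects under $p$ to a leafwise path in $X$ whose holonomy class $H$ satisfies $\Theta(\hat{y}, H, \hat{x}) = (\hat{y}, \germ_{q(\hat{x})} h(\gamma), \hat{x})$, and I take $\Theta^{-1}$ to be this assignment.

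The main obstacle is the well-definedness of $\Theta^{-1}$, which is exactly where the germ groupoid $G$ (rather than the action groupoid $\Gamma \ltimes M$) is forced upon us. Two distinct elements $\gamma, \gamma'$ may define the same arrow $\germ_{q(\hat{x})} h(\gamma) = \germ_{q(\hat{x})} h(\gamma')$ of $G$; writing $\beta := \gamma^{-1}\gamma'$, this says $h(\beta)$ is the identity near $q(\hat{x})$, so $\beta$ preserves every nearby leaf of $\hat{\cl{F}}$ while still acting freely on $\hat{X}$. I must check the two resulting holonomy classes downstairs agree, and I will do this by showing the concatenated loop has trivial holonomy germ: its lift from $\hat{x}$ is an $\hat{\cl{F}}$-holonomy transition composed with $\beta$, and the identities $q \circ \widehat{H} = q$ and $h(\beta) \circ q = q$ near $q(\hat{x})$ force this lifted transition to coincide with $\hat{w} \mapsto \beta \cdot \hat{w}$ on a small transversal chosen $\Gamma$-equivariantly at the two endpoints, which projects to the identity germ on $X$. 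Hence the holonomy class is independent of the chosen $\gamma$, so $\Theta^{-1}$ is well-defined, and the same kind of computation shows $\Theta \circ \Theta^{-1}$ and $\Theta^{-1} \circ \Theta$ are identities. Smoothness of $\Theta^{-1}$ follows as for $\Theta$, completing the proof that $\Theta$ is an isomorphism of Lie groupoids and therefore that $\Hol(\cl{F})$ and $G$ are Morita equivalent.
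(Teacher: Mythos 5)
Your overall architecture is sound, and it is essentially a repackaging of the paper's proof: the paper also passes through the middle groupoid $p^*\Hol(\cl{F})$, but instead of an isomorphism onto $q^*G$ it builds a functor $q_1 \colon p^*\Hol(\cl{F}) \to G$ directly and then verifies the three Morita conditions by hand; your factorization through $q^*G$ would, if completed, make that verification automatic, which is a genuine tidying. However, there is a real gap, and it sits exactly where the paper does its work: the well-definedness of $\Theta$. You assert that, $p$ being a local diffeomorphism, the holonomy class $H$ ``lifts to a germ $\widehat{H}$ of a holonomy transition of $\hat{\cl{F}}$ at $\hat{x}$,'' and you set $\hat{z} := \widehat{H}(\hat{x})$. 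But an arrow of $\Hol(\cl{F})$ is a holonomy \emph{class} of leafwise paths, and what lifts canonically along $p$ is a \emph{path}, not the class: two representatives of the same holonomy class, lifted from $\hat{x}$, may terminate at different points of the fiber $p^{-1}(p(\hat{y}))$. Concretely, take $X = S^1 \times \R$ foliated by the circles $S^1 \times \{t\}$, with $\hat{X} = \R^2$ and $p$ the standard cover: the once-around loop and the constant loop at a point lie in the same (trivial) holonomy class, yet their lifts from $\hat{x}$ end at points differing by a nontrivial deck transformation. Equivalently, there are many germs at $\hat{x}$ that lift $H$ and are holonomy transitions of $\hat{\cl{F}}$ --- one for each admissible endpoint --- so $\hat{z}$, and with it your $\delta$, depends on a choice. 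What must be proved is: if two leafwise paths from $p(\hat{x})$ to $p(\hat{y})$ lie in the same holonomy class, then the deck element $\gamma$ relating their lifted endpoints satisfies $\germ_{q(\hat{x})} h(\gamma) = \germ_{q(\hat{x})} \id$, so that the arrow of $G$ you write down is independent of the choice. This is precisely the central computation in the paper's proof (lift the trivial-holonomy loop to the transversal $\hat{S} = p^{-1}(S)$, note that the $\Gamma$-valued map comparing the lifted holonomy transition with the deck action is smooth into a discrete group, hence locally constant, and then push down by $q$), and your proposal never supplies it.

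Note also that your well-definedness argument for $\Theta^{-1}$ establishes only the \emph{converse} implication: if $\germ_{q(\hat{x})} h(\gamma) = \germ_{q(\hat{x})} h(\gamma')$, then the two projected holonomy classes downstairs agree. That argument is correct (and parallels the paper's verification that the isotropy homomorphism has a well-defined inverse), but the forward implication does not follow from it formally. Nor can you dodge the issue by taking $\Theta^{-1}$ as the primary map: $\Theta^{-1}$ is surjective for easy reasons, but its \emph{injectivity} is exactly the missing forward implication, since two germ-equal arrows of $q^*G$ must be shown not to come from distinct holonomy classes. So the gap cannot be routed around; the paper's local-constancy argument, or an equivalent of it, has to be carried out before your isomorphism $K \cong q^*G$ exists as a map at all.
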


\begin{proof}
We seek the diagram below:
  \begin{equation*}
    \begin{tikzcd}
      \Hol(\cl{F}) \ar[d, shift right] \ar[d, shift left] & \ar[l, "p_1"'] p^*(\Hol(\cl{F})) \ar[d, shift right] \ar[d, shift left] \ar[r, "q_1"] & G \ar[d, shift right] \ar[d, shift left] \\
      X & \ar[l, "p"'] \hat{X} \ar[r, "q"] & M,
    \end{tikzcd}
  \end{equation*}
where $(p,p_1)$ and $(q,q_1)$ are Morita maps.
  
Here $p^*(\Hol(\cl{F}))$ is the pullback of $\Hol(\cl{F})$ along $p$, and $(p, p_1)$ is the natural functor, as described in Example \ref{ex:pullback-groupoid}.  Because $p$ is a surjective submersion, the natural functor is a Morita map. Recall that arrows in $p^*(\Hol(\cl{F}))$ are of the form $\hat{x} \xar{(\hat{y}, [\lambda], \hat{x})} \hat{y}$, where $\lambda$ is a leafwise path from $p(\hat{x})$ to $p(\hat{y})$.

We define $q_1$ as follows. First, note that the map
\begin{equation*}
  \Gamma \times \hat{X} \to \hat{X} \times_p \hat{X}, \quad (\gamma, \hat{x}) \mapsto (\gamma \cdot \hat{x}, \hat{x})
\end{equation*}
is a diffeomorphism, because $p:\hat{X} \to X$ is Galois, hence a principal (left) $\Gamma$-bundle. Denote its inverse by
\begin{equation*}
  (\hat{x},\hat{y}) \mapsto (d(\hat{x}, \hat{y}), \hat{y}),
\end{equation*}
where $d:\hat{X} \times_p \hat{X} \to \Gamma$ is a smooth map. Note that $d(\hat{x}, \hat{y})^{-1} = d(\hat{y}, \hat{x})$, and that $d(\hat{x}, \hat{y})$ is defined by the relation
\begin{equation*}
  d(\hat{x}, \hat{y}) \cdot \hat{y} = \hat{x}.
\end{equation*}
Now, take an arrow $(\hat{y}, [\lambda], \hat{x})$. Take a leafwise path $\lambda$ representing $[\lambda]$, and lift it along the cover $p$ to a unique leafwise path $\hat{\lambda}$ starting at $\hat{x}$. Denote the other end of $\hat{\lambda}$ by $\hat{z}$, which must be in the same leaf of $\hat{\cl{F}}$ as $\hat{x}$. Both $\hat{z}$ and $\hat{y}$ are in the same fiber of $p$. We define
\begin{equation*}
  q_1(\hat{y}, [\lambda], \hat{x}) := \germ_{q(\hat{z})} h(d(\hat{y}, \hat{z})).
\end{equation*}

To check that $q_1$ is well-defined, let $\lambda_1 \in [\lambda]$ be another representative of $[\lambda]$, and lift it to a leafwise path $\hat{\lambda}_1$ starting at $\hat{x}$ and ending at $\hat{z}_1$. We require that both $h(d(\hat{y}, \hat{z}))$ and $h(d(\hat{y}, \hat{z}_1))$ have the same germ at $q(\hat{z})$. It suffices to show that
\begin{equation*}
\gamma := d(\hat{z}_1, \hat{y}) \cdot d(\hat{y}, \hat{z}) = d(\hat{z}_1, \hat{z})  
\end{equation*}
locally acts as the identity on $M$ in a neighbourhood of $q(\hat{z})$.
Let $S$ be a transversal to $\cl{F}$ at $y := p(\hat{y})$. The holonomy of $\lambda_0 := \lambda^{-1} * \lambda_1$ is represented by a transition $\hol^S(\lambda_0)$ of $S$ which, by our assumption that $[\lambda] = [\lambda_1]$, is a restriction of the identity. The pre-image $\hat{S} := p^{-1}(S)$ is a transversal to $\hat{\cl{F}}$, and the holonomy of the lifted path $\hat{\lambda}_0 := (\hat{\lambda})^{-1} * \hat{\lambda}_1$, which takes $\hat{z}$ to $\hat{z}_1$, is represented by a transition $\hol^{\hat{S}}(\hat{\lambda}_0)$ fitting into the diagram
\begin{equation*}
  \begin{tikzcd}[sep=large]
    (\hat{S}, \hat{z}) \ar[r, dashed, "\hol^{\hat{S}}(\hat{\lambda}_0)"] \ar[d, "p"] & (\hat{S}, \hat{z}_1) \ar[d, "p"] \\
    (S, y) \ar[r, dashed, "\hol^S(\lambda_0) = \id"] & (S, y).
  \end{tikzcd}
\end{equation*}
Commutativity implies that the map
\begin{equation*}
 \hat{S} \dashrightarrow \Gamma, \quad \hat{t} \mapsto d(\hol^{\hat{S}}(\hat{\lambda}_0)(\hat{t}), \hat{t})
\end{equation*}
is well-defined. It is smooth, being the composition of smooth functions, hence constant on connected neighbourhoods in $\hat{S}$. In particular, its value at $\hat{z}$ is $d(\hat{z}_1, \hat{z}) = \gamma$, so it is identically $\gamma$ in a connected $\hat{S}$-neighbourhood of $\hat{z}$. Take a suitably small such neighbourhood $\hat{U}$. For all $\hat{t} \in \hat{U}$,
\begin{equation*}
\gamma \cdot q(\hat{t}) = q(d(\hol^{\hat{S}}(\hat{\lambda}_0)(\hat{t}), \hat{t}) \cdot \hat{t}) = q(\hol^{\hat{S}}(\hat{\lambda}_0)(\hat{t})) = q(\hat{t}).
\end{equation*}
Therefore $\gamma$ acts as the identity on the open neighbourhood $q(\hat{U})$ of $q(\hat{z})$ in $M$, as required.

Now, the source of $q_1(\hat{y}, [\lambda], \hat{x})$ is $q(\hat{z}) = q(\hat{x})$, and its target is $d(\hat{y}, \hat{z}) \cdot q(\hat{z}) = q(\hat{y})$. It is tedious but straightforward to check that $q_1$ respects multiplication, and that $q_1$ is smooth. Thus $(q, q_1)$ is a smooth functor.

Finally, we show the functor $(q, q_1)$ is a Morita map. There are three things to check:
\begin{itemize}
\item The map $\overline{q}:\hat{X}/p^*(\Hol(\cl{F})) \to M/\Gamma$ is a homeomorphism. To see this, observe that $\overline{q}$ factors as $\overline{p}$ followed by the map $X/\cl{F} \to M/\Gamma_{\text{eff}}$ from Proposition \ref{prop:develop-leaf-space}, and both of these are diffeomorphisms, hence homeomorphisms.
\item The homomorphism $q_{\hat{x}}:\Hol(\cl{F})_{p(\hat{x})} \to G_{q(\hat{x})}$ is an isomorphism. We give its inverse. Let $\germ_{q(\hat{x})} h(\gamma) \in G_{q(\hat{x})}$. Then $\gamma \cdot \hat{x}$ and $\hat{x}$ are in the same leaf of $\hat{\cl{F}}$. Let $\hat{\lambda}$ be any leafwise path joining them, and set $\lambda := p_*\hat{\lambda}$. Then $\lambda$ is a loop at $p(\hat{x})$, and we take the inverse of $q_{\hat{x}}$ to be $\gamma \mapsto [\lambda]$. This is well-defined because if $\hat{\lambda}$ is any leafwise loop at $\hat{x}$, then $p_*\hat{\lambda}$ has trivial holonomy: the foliation $\hat{\cl{F}}$ is simple, so $\hat{\lambda}$ has trivial holonomy, hence so does $p_*\hat{\lambda}$.
\item The normal derivative $N_{\hat{x}}q:N_{\hat{x}} \hat{L}_{\hat{x}} \to T_{q(\hat{x})}M$ is an isomorphism. This follows immediately from the fact $\hat{\cl{F}}$ is simple.
\end{itemize}
\end{proof}

\begin{rem}
  In the set-up of Proposition \ref{prop:morita-to-pseudogroup}, choose a complete transversal $S$ to $\cl{F}$. The Morita equivalence between $\Hol(\cl{F})$ and the germ groupoid of the pseudogroup generated by the $\Gamma$ action on $M$ induces an equivalence of the pseudogroups $\Psi(\Hol_S(\cl{F}))$ and the pseudogroup generated by the $\Gamma$ action. Therefore, up to equivalence, the holonomy pseudogroup of $\cl{F}$ is generated by the $\Gamma$ action.
\end{rem}

In Proposition \ref{prop:morita-to-pseudogroup}, we encounter the pseudogroup generated by the $\Gamma$ action. In general, the corresponding germ groupoid will not be Morita equivalent to the action groupoid $\Gamma \ltimes M$. The obstruction is that two distinct elements of $\Gamma$ may induce diffeomorphisms with the same germ at some point. When $(X, \cl{F}, p)$ is a developable Riemannian foliation, however, this obstruction vanishes. We will first use the following lemma.

\begin{lem}
  \label{lem:germ-determined}
  If $\Gamma$ is a countable group acting effectively on a connected Riemannian manifold $M$ by isometries, then $\Gamma \ltimes M$ is Morita equivalent to $G$, the germ groupoid of the pseudogroup generated by the $\Gamma$ action.
\end{lem}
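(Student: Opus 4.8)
The plan is to show that the natural comparison functor $\Phi \colon \Gamma \ltimes M \to G$, which is the identity on objects and sends an arrow $(\gamma, m)$ to $\germ_m h(\gamma)$, is in fact an \emph{isomorphism} of \'{e}tale Lie groupoids; since an isomorphism is trivially a Morita map, this immediately yields the desired Morita equivalence. Note that both groupoids are \'{e}tale over the same base $M$: the group $\Gamma$ is countable, hence discrete, so $\Gamma \ltimes M$ is \'{e}tale, and $G$ is a germ groupoid. The functor $\Phi$ is exactly the comparison anticipated in the remark preceding the lemma, and proving it is an isomorphism is precisely the statement that the obstruction described there—distinct group elements inducing the same germ—vanishes under the present hypotheses.

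First I would dispatch the formal properties of $\Phi$. Functoriality is the computation $\germ_{\gamma \cdot m} h(\gamma') \circ \germ_m h(\gamma) = \germ_m h(\gamma' \gamma)$, which matches $\Phi(\gamma'\gamma, m)$, and $\Phi$ clearly preserves source and target. For smoothness, on the bisection $\{\gamma\} \times M$ the map $\Phi$ is the \'{e}tale chart $m \mapsto \germ_m h(\gamma)$ of $G$, which is the identity in source coordinates; hence $\Phi$ is a local diffeomorphism of arrow spaces. Surjectivity on arrows is also automatic: since the generators $h(\Gamma)$ form a group under composition and inversion, every element of the pseudogroup they generate is locally the restriction of a single $h(\gamma)$, so every germ in $G$ has the form $\germ_m h(\gamma)$. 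Finally, the orbits of both groupoids are exactly the $\Gamma$-orbits, so $\Phi$ descends to the identity on the common orbit space $M/\Gamma$.

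The crux—and the only place where the hypotheses enter—is injectivity of $\Phi$ on arrows. The key input is the rigidity of isometries: an isometry of a connected Riemannian manifold is determined by its value and differential at a single point, so two isometries agreeing on a nonempty open set coincide everywhere. Consequently, if $\germ_m h(\gamma_1) = \germ_m h(\gamma_2)$, then $h(\gamma_1)$ and $h(\gamma_2)$ agree near $m$ and therefore on all of $M$; effectiveness of the action then forces $\gamma_1 = \gamma_2$. Thus $\Phi$ is injective on arrows, and being a bijective local diffeomorphism it is a diffeomorphism of arrow spaces, so $\Phi$ is an isomorphism of Lie groupoids. I expect the main (and essentially only) obstacle to be invoking the isometry-rigidity theorem correctly and flagging that connectedness of $M$ is essential to it; the remaining steps are routine verifications.
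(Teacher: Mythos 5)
Your proposal is correct and takes essentially the same route as the paper: the identical comparison functor $(\gamma,m)\mapsto \germ_m h(\gamma)$, smoothness via \'{e}taleness of $\Gamma\ltimes M$, and injectivity via rigidity of isometries on a connected manifold combined with effectiveness of the action. The only difference is one of packaging: you upgrade the conclusion to a full isomorphism of \'{e}tale groupoids (using that every germ in the generated pseudogroup is locally a single $h(\gamma)$, since $h(\Gamma)$ is already closed under composition and inversion), whereas the paper verifies just the three defining conditions of a Morita map; this is a harmless strengthening, not a different argument.
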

\begin{proof}
Let $h:\Gamma \to \Diff(M)$ be the action homomorphism.  We claim the functor $\phi: \Gamma \ltimes M \to G$ given by the identity on the base, and on arrows by
  \begin{equation*}
    (\gamma, m) \mapsto \germ_m h(\gamma),
  \end{equation*}
  is a Morita map.

The functor $\phi$ is smooth because $\Gamma$ being countable implies $\Gamma \ltimes M$ is \'{e}tale. Since $\phi$ is the identity on the base, it descends to the identity $M/\Gamma \to M/G$, which is a homeomorphism, and the normal derivative of $\phi$ is an isomorphism. It remains to show that the homomorphism $\phi_m:\Gamma_m \to G_m$ is an isomorphism. It is onto by definition of $G_m$. To see it is one-to-one, suppose that $\germ_m h(\gamma) = \germ_m h(\gamma')$. Because $h(\gamma)$ and $h(\gamma')$ are isometries, the fact their germs agree at $m$ implies that $h(\gamma) = h(\gamma')$ on the connected component of $m$, which is $M$ (see \cite[Proposition 5.22]{L18}). Since $\Gamma$ acts effectively, we conclude that $\gamma = \gamma'$.
\end{proof}

Now let $(X, \cl{F}, p, g)$ be a Riemannian developable foliation. Let $\Gamma_M$ be the normal subgroup of $\Gamma$ consisting of elements that act trivially on $M$, and set $\Gamma_{\text{eff}} := \Gamma/\Gamma_M$.  Then $\Gamma_{\text{eff}}$ naturally acts on $M$ effectively.
% By the Galois correspondence for coverings, the cover $p:\hat{X} \to X$ factors as
% \begin{equation*}
%   \hat{X} \to \ti{X} \xar{\ti{p}} X,
% \end{equation*}
% where $\ti{p}$ is a Galois covering of $X$ with group of deck transformations $\Gamma_{\text{eff}}$. The developing map $q:\hat{X} \to M$ is $\Gamma_M$-invariant, since $q(\hat{x} \cdot \gamma_0) = q(\hat{x})\cdot \gamma_0 = q(\hat{x})$ whenever $\gamma_0 \in \Gamma_M$. The foliation $\hat{\cl{F}}$ is also fixed by $\Gamma_M$, so it descends to a foliation $\ti{\cl{F}}$ on $\ti{X}$. Thus the developing map factors as
% \begin{equation*}
%   \hat{X} \to \ti{X} \xar{\ti{q}} M,
% \end{equation*}
% where $\ti{q}:\ti{X} \to M$ is the quotient map for $(\ti{X}, \ti{\cl{F}})$. Therefore $(X, \cl{F}, \ti{p})$ is a developable foliation with developing map $\ti{q}$.
The transverse metric $g$ pulls back along $p$ to a transverse metric for $\hat{\cl{F}}$, and this descends to a Riemannian metric $g_M$ on $M$ for which $\Gamma_{\text{eff}}$ acts by isometries. We now get a corollary to Proposition \ref{prop:morita-to-pseudogroup}.

\begin{cor}
  \label{cor:effective-germ-determined-equivalence}
  If $(X, \cl{F}, p, g)$ is a developable Riemannian foliation, and the cover $p$ is Galois and connected, then $\Hol(\cl{F})$ is Morita equivalent to $\Gamma_{\text{eff}} \ltimes M$.
\end{cor}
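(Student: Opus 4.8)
The plan is to chain together the two preceding results using transitivity of Morita equivalence, with $\Gamma_{\text{eff}}$ serving as the bridge. First, since $(X, \cl{F}, p)$ is developable with $p$ Galois, Proposition \ref{prop:morita-to-pseudogroup} gives that $\Hol(\cl{F})$ is Morita equivalent to $G$, the germ groupoid of the pseudogroup generated by the $\Gamma$ action on $M$.

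The key observation is that this germ groupoid depends only on the image $h(\Gamma) \leq \Diff(M)$. The action homomorphism factors as $\Gamma \twoheadrightarrow \Gamma_{\text{eff}} \hookrightarrow \Diff(M)$, the second arrow being injective by the definition $\Gamma_{\text{eff}} = \Gamma/\Gamma_M$; hence $h(\Gamma)$ equals the image of $\Gamma_{\text{eff}}$, and so $G$ is simultaneously the germ groupoid of the pseudogroup generated by the effective $\Gamma_{\text{eff}}$ action. I would then verify the hypotheses of Lemma \ref{lem:germ-determined} for this action: $\Gamma_{\text{eff}}$ is countable, being a quotient of the deck group $\Gamma$, itself countable as a quotient of $\pi_1(X)$ for the (second-countable) manifold $X$; the manifold $M$ is connected, being the image of the connected cover $\hat{X}$ under the surjective developing map $q$; the action is effective by construction; and it is by isometries of $g_M$, as recorded in the paragraph preceding the corollary. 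Lemma \ref{lem:germ-determined} then yields that $\Gamma_{\text{eff}} \ltimes M$ is Morita equivalent to $G$.

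Combining the two equivalences $\Hol(\cl{F}) \sim G \sim \Gamma_{\text{eff}} \ltimes M$ and invoking transitivity of Morita equivalence completes the argument. I anticipate no genuine obstacle, since all the substance is carried by the two cited results; the only care required is the bookkeeping that $G$ is unchanged in passing from $\Gamma$ to $\Gamma_{\text{eff}}$, together with the verification of the mild hypotheses (countability, connectedness, effectiveness, and the isometric property of the action) demanded by Lemma \ref{lem:germ-determined}.
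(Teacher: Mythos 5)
Your proposal is correct and follows essentially the same route as the paper: invoke Proposition \ref{prop:morita-to-pseudogroup} to get $\Hol(\cl{F}) \sim G$, note that the pseudogroup generated by the $\Gamma$ action coincides with that generated by the $\Gamma_{\text{eff}}$ action, check the hypotheses of Lemma \ref{lem:germ-determined} (countability, effectiveness, isometries for $g_M$, connectedness of $M$), and conclude by transitivity. Your additional bookkeeping (countability of $\Gamma_{\text{eff}}$ via the deck group, connectedness of $M$ via surjectivity of $q$) only makes explicit what the paper leaves implicit.
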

\begin{proof}
  By Proposition \ref{prop:morita-to-pseudogroup}, we know that $\Hol(\cl{F})$ is Morita equivalent to $G$, the germ groupoid of the pseudogroup generated by the $\Gamma$ action on $M$. This pseudogroup is also generated by the $\Gamma_{\text{eff}}$ action. But since $\Gamma_{\text{eff}}$ is a countable group acting effectively on $M$ by isometries (for the metric $g_M$ described in the paragraph above), and $M$ is connected because $X$ is connected, we conclude by Lemma \ref{lem:germ-determined} that $G$ is Morita equivalent to $\Gamma_{\text{eff}}\ltimes M$.
\end{proof}
\subsection{Lie foliations}\label{Darboux-covering}
Our main example of developable foliations will be $\fk{g}$-Lie foliations.
\begin{defn} \label{Lie-foliation} Let $(X, \cl{F})$ be a co-dimension $q$ foliation of a connected manifold, and let $\mathfrak{g}$ be a $q$-dimensional real Lie algebra. We say that $(X, \mathcal{F})$ is a \define{complete} $\mathfrak{g}$-\define{Lie foliation} if it has a transverse parallelism $\{\bar{Z}_1, \cdots, \bar{Z}_q\}$, called a complete transverse $\mathfrak{g}$-Lie parallelism, such that:
\begin{enumeratea}
\item we have
  \begin{equation*}
    [\bar{Z}_i, \bar{Z}_j]=\displaystyle \sum_{p}c_{ij}^p \bar{Z}_p,
  \end{equation*}
where the $c_{ij}^p$ are the structure constants of $\mathfrak{g}$;

\item $\bar{Z}_1, \cdots, \bar{Z}_q$ are represented by complete foliate vector fields $Z_1, \cdots, Z_q$ on $X$.

\end{enumeratea}
As a convention, we identify $\mathfrak{g}$ with the Lie sub-algebra of $\mathfrak{X}(X, \mathcal{F})$ generated by $\{\bar{Z}_1, \cdots, \bar{Z}_q\}$.
 \end{defn}

Fedida described the structure of complete $\fk{g}$-Lie foliations in \cite{F71}. We review their construction, as retold in \cite{Mo88}, and summarize it in a theorem. Let $\{\bar{Z}_1, \cdots, \bar{Z}_q\}$ be a complete transverse $\mathfrak{g}$-Lie parallelism of $(X,\mathcal{F})$, and let $Z_1, \cdots, Z_q$ be complete foliate vector fields representing the transverse vector fields $\bar{Z}_1, \cdots, \bar{Z}_q$. Define a $\mathfrak{g}$-valued $1$-form $\alpha$ on $X$ as follows: any vector $Z_x\in T_xX$ can be uniquely written as
 \begin{equation*}
   Z_x= \lambda_1 (Z_1)_x+\cdots+\lambda_q (Z_q)_x+Z_x',
 \end{equation*}
 where $Z_x' \in T_x\mathcal{F}$, and define
 \begin{equation*}
   \alpha (Z_x) := \lambda_1 \bar{Z}_1+\cdots+\lambda_q \bar{Z}_q.
 \end{equation*}

 Let $L_{\alpha}$ be the Lie subalgebra of $\mathfrak{R}(\mathcal{F})$ formed by those foliate vector fields whose corresponding transverse vector fields belong to $\mathfrak{g}$ (viewed as a subalgebra of $\fk{X}(X, \cl{F})$), and let $G$ be the unique connected and simply-connected Lie group whose Lie algebra is $\mathfrak{g}$. For each $Z \in L_{\alpha}$, we can define its lift $\ti{Z}$ to $\fk{X}(X \times G)$ as follows:
\begin{equation}\label{lift}
  \ti{Z}_{(x, g)} := Z_x + (L_g)_*(\alpha(Z_x)) \in T_{(x, g)}(X\times G)\cong T_x X \oplus T_gG,
\end{equation}
where $(L_g)_*: \fk{g} \to T_gG$ is the tangent map of left multiplication by $g$. The lifted vector field $\ti{Z}$ is invariant under the left action of $G$ on $X \times G$. Moreover, the correspondence
\begin{equation*}
  \fk{R}(\cl{F}) \to \fk{X}(X \times G), \quad Z \mapsto \ti{Z} 
\end{equation*}
is a Lie algebra homomorphism that lifts $L_{\alpha}$ to a Lie algebra $\ti{L}_{\alpha}$. 

Let $\ti{H}$ be the distribution over $X \times G$ spanned by the Lie algebra $\ti{L}_\alpha$. This is an involutive distribution, of the same dimension as $\cl{F}$, and thus induces a foliation on $X \times G$. This foliation is invariant under the left $G$-action on $X \times G$. Fix a leaf $\ti{X}$ of this foliation. Then we have the following diagram:
\begin{equation}\label{Darboux-dia}
\begin{tikzcd}
  &\ti{X} \arrow[dl, swap, "\pr_1"] \arrow[dr, "\pr_2"]&\\ X & &G.
\end{tikzcd}
\end{equation}
The map $\pr_1: \ti{X}\rightarrow X$ is a covering map, called the \textbf{Darboux covering} of $X$, and $\pr_2: \ti{X}\rightarrow G$ is a fibration. The pullback foliation $\ti{\cl{F}} := \pr_1^{-1}(\cl{F})$ coincides with the foliation on $\ti{X}$ given by the fibers of $\pr_2$. It follows that $(X, \cl{F}, \pr_1)$ is a developable foliation, with developing map $\pr_2$.

Consider $(x_0, g_0) \in \ti{X}$. Define the subgroup $\Gamma$ of $G$ by
\begin{equation*}
   \Gamma := \{ g\in G\, \vert\, g \cdot (x_0, g_0) \in \ti{X}\}.
\end{equation*}
Since $\ti{X}$ is the leaf of a foliation that is invariant under the left action of $G$, its translate $g\cdot \ti{X}$ must also be a leaf of this foliation for each $g\in G$. Therefore, if $g \cdot (x_0, g_0) \in \ti{X}$, then $g \cdot \ti{X}\cap \ti{X}\neq \emptyset$, and hence $g\cdot \ti{X}=\ti{X}$ for each $g\in \Gamma$. This shows that $\Gamma$ does not depend on the choice of $x_0$. Furthermore we can identify $\Gamma$ with the Deck transformation group of the cover $\pr_1:\ti{X} \to X$, and $\Gamma$ acts transitively on each fiber of $\pr_2$. Thus $\pr_1:\ti{X} \rightarrow X$ is a Galois covering. We can summarize the preceding discussion in a theorem.

\begin{thm}[Fedida]\label{leaf-space}
  Let $(X, \cl{F})$ be a complete $\fk{g}$-Lie foliation. Then $\cl{F}$ is developable with respect to the Darboux cover $\pr_1:\ti{X} \to X$, and the developing map is $\pr_2:\ti{X} \to G$, where $G$ is the connected and simply-connected Lie group with Lie algebra $\fk{g}$. The cover $\pr_1$ is Galois, and its group of deck transformations $\Gamma$ is naturally a subgroup of $G$, and acts on $G$ effectively.
\end{thm}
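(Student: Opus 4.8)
The plan is to realize the distribution $\ti{H}$ as the kernel of an explicit $\fk{g}$-valued $1$-form and then read off the covering and fibration statements from completeness of the parallelism. First I would let $\theta$ be the left Maurer--Cartan form of $G$ and introduce on $X \times G$ the $\fk{g}$-valued $1$-form
\[
  \beta := \pr_1^*\alpha - \pr_2^*\theta .
\]
Evaluating $\beta$ on a lift $\ti{Z}$ gives $\alpha(Z_x) - \theta((L_g)_*\alpha(Z_x)) = \alpha(Z_x) - \alpha(Z_x) = 0$, so $\ti{H} \subseteq \ker\beta$; since $\beta$ is fibrewise onto $\fk{g}$, its kernel has rank $\dim X$, which matches $\ti{H}$, whence $\ti{H} = \ker\beta$. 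Its involutivity, already recorded in the construction, is the Maurer--Cartan equation $d\beta + \tfrac12[\beta,\beta]=0$, which for $\alpha$ repackages the structure constants of $\fk{g}$. Inspecting $\beta$ along the two factors does two things at once: $d\pr_1$ restricts to an isomorphism $\ti{H}_{(x,g)} \to T_xX$, while $\ti{H}$ meets each fibre $\{x\}\times G$ of $\pr_1$ only in the zero vector. Thus $\pr_1|_{\ti{X}}$ is a local diffeomorphism and $\ti{H}$ is a flat Ehresmann connection for $\pr_1 : X\times G \to X$. Dually $d\pr_2$ kills exactly the leaf directions $T_x\cl{F}$, so $\pr_2|_{\ti{X}}$ is a submersion whose fibres are the leaves of $\ti{\cl{F}} = \pr_1^{-1}(\cl{F})$.

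The heart of the argument is the completeness input. The horizontal lift of a path $c$ in $X$ through $(c(0),g_0)$ is governed by the ODE $\dot g = (L_g)_*\alpha(\dot c)$ on $G$; this is a time-dependent \emph{left-invariant} equation, hence complete on $[0,1]$, and the lifted curve is tangent to $\ti{H}$ and so stays inside the single leaf $\ti{X}$. Therefore $\pr_1|_{\ti{X}}$ has the path-lifting property, and a local diffeomorphism of manifolds with the path-lifting property over a connected base is a covering map; this identifies $\pr_1 : \ti{X} \to X$ as the Darboux cover. For $\pr_2$ I would use the other face of completeness: the lift $\ti{Z}_i$ has $X$-component the complete field $Z_i$ and $G$-component the (automatically complete) left-invariant field $(L_{\cdot})_*\bar{Z}_i$, so $\ti{Z}_i$ is complete on $\ti{X}$, and $\pr_2$ carries its flow to $g \mapsto g\exp(t\bar{Z}_i)$. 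Composing these complete flows from a basepoint realizes every product $\exp(t_1\bar{Z}_{i_1})\cdots\exp(t_k\bar{Z}_{i_k})$; since such products exhaust the connected group $G$, the submersion $\pr_2 : \ti{X} \to G$ is onto. Being a surjective submersion with fibres the leaves of $\ti{\cl{F}}$, it is a developing map, so $\cl{F}$ is developable.

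It remains to identify the deck group. Because $\ti{H} = \ker\beta$ and $\beta$ is invariant under the left $G$-action on the second factor, each translate $g\cdot\ti{X}$ is again a leaf of $\cl{H}$, hence equals $\ti{X}$ or is disjoint from it. Thus $\Gamma = \{g \in G : g\cdot\ti{X} = \ti{X}\}$ is a subgroup of $G$; since $\pr_1(g\cdot(x,h)) = x$, it acts on $\ti{X}$ covering $\id_X$, i.e.\ by deck transformations of $\pr_1$, and whenever $(x,h_1),(x,h_2)\in\ti{X}$ the element $h_2h_1^{-1}\in\Gamma$ carries one to the other, so $\Gamma$ is transitive on the fibres of $\pr_1$ and the cover is Galois with deck group $\Gamma$. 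Finally, $\pr_2(g\cdot(x,h)) = gh$ shows $\Gamma$ acts on the developing target $G \cong \ti{X}/\ti{\cl{F}}$ by left translation, which is free and in particular effective. I expect the covering claim to be the only genuinely delicate point: one must check that completeness of the \emph{basis} fields $Z_i$ really upgrades to completeness of the full horizontal-lift ODE, so that \emph{every} path lifts (not merely the integral curves of the $Z_i$), and then invoke the correct path-lifting criterion for local diffeomorphisms.
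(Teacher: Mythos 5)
Your skeleton is Fedida's suspension construction, which is exactly the route the paper takes (the paper itself largely defers to \cite{F71} and \cite{Mo88}, writing out only the deck-group argument). Most of your steps are sound: the identification $\ti{H}=\ker(\pr_1^*\alpha-\pr_2^*\theta)$ is correct; the covering claim for $\pr_1|_{\ti{X}}$ does follow from global solvability of the time-dependent left-invariant ODE together with the path-lifting criterion (legitimate here because $\ti{X}$ is Hausdorff, so lifts are unique); surjectivity of $\pr_2$ via the complete flows of the $\ti{Z}_i$ is right; and the Galois and effectiveness arguments match the paper's. Note, incidentally, that your closing worry is misplaced: as you yourself observe, the horizontal-lift ODE is complete simply because it is left-invariant, with no input from completeness of the $Z_i$, so path lifting is not where the completeness hypothesis does its work.

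The genuine gap is the sentence ``so $\pr_2|_{\ti{X}}$ is a submersion whose fibres are the leaves of $\ti{\cl{F}}$.'' From $\ker d\pr_2|_{T\ti{X}}=T\ti{\cl{F}}$ you may conclude only that each \emph{connected component} of a fibre is a leaf; that each fibre is a \emph{single} leaf (i.e.\ that the fibres are connected) is precisely what makes $\ti{\cl{F}}$ simple in the sense of Definition \ref{def:developable}, and it is the main content of the theorem. Your inference uses nothing about completeness, and it is false without it: take $X=\R^2\setminus\{0\}$ foliated by horizontal lines, a $\fk{g}$-Lie foliation with $\fk{g}=\R$ and $\alpha=dy$. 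There $\ti{X}=\{(x,y,y)\}$, the map $\pr_1|_{\ti{X}}$ is a diffeomorphism, and $\ker d\pr_2$ is exactly the leaf directions---every step of your argument up to this point goes through verbatim---yet $\pr_2^{-1}(0)$ has two components, so it is a union of two leaves and $\pr_2$ is not a developing map. The repair is to spend completeness here as well: the complete fields $\ti{Z}_i$ are $\pr_2$-related to a left-invariant frame of $G$, so an Ehresmann-type argument shows $\pr_2|_{\ti{X}}$ is a locally trivial fibration; then the exact sequence $\pi_1(G)\to\pi_0(\text{fibre})\to\pi_0(\ti{X})$, together with $\pi_1(G)=1$ (this is where simple connectivity of $G$ is essential) and connectedness of the leaf $\ti{X}$, forces each fibre to be connected, hence a leaf. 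This is the same mechanism the paper invokes in Proposition \ref{prop:lifted-fedida}, at the level of the universal cover.
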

%Since $G$ only acts on the second factor of $X\times G$, any element $g$ in $\Gamma$ can be identified
%with the map
% \[ \tilde{X}\rightarrow \tilde{X}, (x, h) \mapsto (x, g\cdot h),\,\,\,g\in G.\]
%which is clearly an element in the Deck transformation group of the covering space $p:\tilde{X}\rightarrow X$. 

%Conversely, let $\gamma$ be an element in the Deck transformation group, and write $\tilde{x}_0= (x_0, \rho(\tilde{x}_0))$. Then we must have
%$\gamma(\tilde{x}_0)= (x_0, g_{\gamma}\cdot \rho(\tilde{x}_0))$ for some unique element $g_{\gamma}\in \Gamma$.  Thus $\gamma$ must be given by  $( x, h)\mapsto (x, g_{\gamma}\cdot h)$ on the entire $\tilde{X}$.

%Let $K=\overline{\Gamma}$. It's straightforward to check that each fiber of the covering map $p: \tilde{N}\rightarrow N$ gets mapped to a coset of $G/K$. So we have a map $\psi: N\rightarrow G/K$ such that the following diagram commutes.
To conclude this subsection, we explain how the Darboux cover lifts to the universal cover. Suppose that $p:\hat{X} \to X$ is the universal cover of $X$. Establish notation according to the following commutative diagram, which exists because $p$ is universal:
\begin{equation*}
  \begin{tikzcd}
    & \hat{X} \ar[d, "\rho"] \ar[ddl, bend right, "p"'] \ar[ddr, bend left, "q"] & \\
    & \ti{X} \ar[dl, "\pr_1"'] \ar[dr, "\pr_2"] & \\
    X & & G.
  \end{tikzcd}
\end{equation*}
The map $q: \hat{X}\rightarrow G$ is a Serre fibration. It follows from the long exact sequence of homotopy groups, and the fact $\hat{X}$ and $G$ are both connected and simply-connected, that each fibre of $q$ is connected and simply-connected. As a result, $q:\hat{X} \to G$ is a locally trivial fibration, whose fibers are the leaves of the lifted $\hat{\cl{F}} := p^{-1}(\cl{F})$. Thus $G$ is also the leaf space of the foliated manifold $(\hat{X},\hat{\mathcal{F}})$, and $(X, \cl{F}, p)$ is a developable foliation.
     
     %Choose base points $x_0, \tilde{x}_0$ and $\widehat{x}_0$ in $N$, $\tilde{N}$ and $\widehat{N}$ respectively which satisfy $\tilde{p}(\widehat{x}_0)=\tilde{x}_0$ and $p(\tilde{x}_0)=x_0$.  
%Let $\widehat{\Gamma}$ be the Deck transformation group of the universal covering $\widehat{p}: \widehat{N}\rightarrow N$. For 
%an element $[ \sigma] \in \pi_1(N, x_0)$, we will denote by $\widehat{\sigma} \in \widehat{\Gamma}$ the Deck transformation  
%that maps $\widehat{x}_0$ to the end point of the lifting of $\sigma: [0, 1]\rightarrow N$ to $\widehat{N}$ that starts from $\widehat{x}_0$, and by  
%$\tilde{\sigma} \in \Gamma$ the Deck transformation  
%that maps $\tilde{x}_0$ to the end point of the lifting of $\sigma: [0, 1]\rightarrow N$ to $\tilde{N}$ that starts from $\tilde{x}_0$. Then the map 
%$\hat{\rho}: \widehat{N}\rightarrow G$ is equivariant in the following sense:
%\begin{equation}\label{equivariantness} \hat{\rho}(\widehat{\sigma}(z) )= \tilde{\sigma}\cdot \hat{\rho}(z), \,\,\,\forall\, z\in \widehat{N},\end{equation}
%where $\tilde{\sigma}$ is regarded as an element in $G$.
The map $p: \hat{X}\rightarrow X$ is the universal cover of $X$. Thus the covering map $\rho:\hat{X}\rightarrow \tilde{X}$ induces a surjective homomorphism $\psi: \pi_1(X)\rightarrow \Gamma \leq G$. As a result, $\pi_1(X)$ also acts on $G$ by $\tau \cdot g := \psi(\tau) g$. The map $q$ is $\pi_1(X)$-equivariant with respect to this action, $q(\tau \cdot \hat{x}) = \tau \cdot q(\hat{x})$. We summarize this in a proposition.
\begin{prop}
  \label{prop:lifted-fedida}
  Let $(X, \cl{F})$ be a complete $\fk{g}$-Lie foliation. Then it is developable with respect to the universal cover $p:\hat{X} \to X$. The developing map is $q:\hat{X} \to G$, and the action of $\pi_1(X)$ on $G$ is given by the homomorphism $\psi:\pi_1(X) \to \Gamma \leq G$ induced by the cover $\hat{X} \to \ti{X}$ of the Darboux cover $\ti{X}$. The leaves of $\hat{\cl{F}}$ are simply-connected.
\end{prop}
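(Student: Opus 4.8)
The plan is to reduce everything to Fedida's Theorem~\ref{leaf-space} combined with the universal property of the universal cover. By Theorem~\ref{leaf-space}, the Darboux cover $\pr_1:\ti{X} \to X$ is a connected Galois cover along which $\cl{F}$ becomes simple, with developing map $\pr_2:\ti{X} \to G$ and deck group $\Gamma \leq G$ acting on $G$ by left translation. Since $\hat{X}$ is the universal cover of $X$ and $\ti{X}$ is a connected intermediate cover, the universal property furnishes a covering map $\rho:\hat{X} \to \ti{X}$ with $\pr_1 \circ \rho = p$, giving the stated commutative diagram; I then set $q := \pr_2 \circ \rho$.

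First I would prove developability with developing map $q$. As $\rho$ is a covering map and $\pr_2$ a locally trivial fibration, both are Serre fibrations, and hence so is their composite $q$. The pullback foliation satisfies $\hat{\cl{F}} = p^{-1}(\cl{F}) = \rho^{-1}(\ti{\cl{F}})$; since the leaves of $\ti{\cl{F}}$ are exactly the (connected) fibers of $\pr_2$ and $\rho$ is a covering, the leaves of $\hat{\cl{F}}$ are the connected components of the fibers of $q$. To see these fibers are already connected, I would invoke the long exact sequence of homotopy groups of the fibration $q$ with fiber $F$: the exact segment $\pi_1(G) \to \pi_0(F) \to \pi_0(\hat{X})$, together with $\pi_1(G) = 0$ and the connectedness of $\hat{X}$, forces $\pi_0(F)$ to be trivial. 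Hence each fiber of $q$ is a single leaf of $\hat{\cl{F}}$, so $q$ is a surjective submersion whose fibers are the leaves, which is precisely developability of $(X, \cl{F}, p)$ with developing map $q$.

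Next I would identify the transverse action by tracking deck groups. The deck group of $p$ is $\pi_1(X)$ and that of $\pr_1$ is $\Gamma$; since $p$ factors through the Galois cover $\pr_1$ via $\rho$, the induced homomorphism $\psi:\pi_1(X) \to \Gamma$ is the quotient map $\pi_1(X) \to \pi_1(X)/(\pr_1)_*\pi_1(\ti{X}) \cong \Gamma$, and in particular is surjective. Letting $\pi_1(X)$ act on $G$ through $\psi$ followed by the left-translation action of $\Gamma \leq G$, the equivariance of $q$ reduces to the chain $q(\tau \cdot \hat{x}) = \pr_2(\rho(\tau \cdot \hat{x})) = \pr_2(\psi(\tau) \cdot \rho(\hat{x})) = \psi(\tau) \cdot q(\hat{x})$, where the second equality is the $\psi$-equivariance of the deck map $\rho$ and the last uses the $\Gamma$-equivariance of $\pr_2$ supplied by Theorem~\ref{leaf-space}.

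The one point requiring input beyond this bookkeeping --- and the step I expect to be the real crux --- is the simple-connectedness of the leaves of $\hat{\cl{F}}$. I would again read off the homotopy sequence of $q$: the segment $\pi_2(G) \to \pi_1(F) \to \pi_1(\hat{X})$ with $\pi_1(\hat{X}) = 0$ exhibits $\pi_1(F)$ as a quotient of $\pi_2(G)$, and since $G$ is a Lie group we have $\pi_2(G) = 0$ by Cartan's theorem, whence $\pi_1(F) = 0$. This is exactly where the Lie-group structure of the developing base $G$ is essential rather than incidental; the remainder of the argument is standard covering-space theory assembled around Fedida's theorem.
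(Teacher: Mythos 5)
Your proof is correct and follows essentially the same route as the paper: factor the universal cover through the Darboux cover via $\rho:\hat{X}\to\ti{X}$, set $q := \pr_2 \circ \rho$, read off connectedness and simple-connectedness of the fibers of the Serre fibration $q$ from its long exact homotopy sequence, and obtain the $\pi_1(X)$-action and equivariance of $q$ by tracking deck groups through $\psi$. If anything, you are more careful than the paper at the crux you identify: the paper attributes the simple-connectedness of the fibers only to $\hat{X}$ and $G$ being connected and simply-connected, whereas exactness of $\pi_2(G)\to\pi_1(F)\to\pi_1(\hat{X})$ genuinely requires $\pi_2(G)=0$, which is the Lie-theoretic input (Cartan's theorem) you correctly supply.
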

In particular, any foliated principal bundle $P \to \hat{X}$ over $(\hat{X}, \hat{\cl{F}})$ is transversal, by Example \ref{ex:simply-connected-leaves-transversal}, and the leaf space $P/\cl{F}_P$ is a principal bundle over $G$ by Lemma \ref{quotient-bundle}.

\section{The leaf space of a Killing foliation}
\label{sec:leaf-space-killing}
In this section we show that the leaf space of a complete Killing foliation is a diffeological quasifold. Note that by Proposition \ref{Molino-sheaf}, there is a natural transverse action of the structure Lie algebra on a leaf closure of a Killing foliation. We first make an observation about such an action.
  \begin{lem}\label{killing-action-isotropy} Let $(M, \mathcal{F})$ be a complete Killing foliation, let $\mathfrak{g}$ be its structure Lie algebra, and let $L$ be a leaf of $\mathcal{F}$. Consider the natural transverse Lie algebra action $a: \mathfrak{g} \rightarrow \mathfrak{X}(\overline{L}, \mathcal{F}_{\overline{L}})$.  Suppose that $\xi \in \text{stab}( \mathfrak{g}\ltimes \mathcal{F}, x_0)$ for a point $x_0$ in $ \overline{L}$. Then $\xi\in \text{stab}( \mathfrak{g}\ltimes \mathcal{F}, x)$ for every point $x\in \overline{L}$.  
  \end{lem}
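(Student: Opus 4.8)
The plan is to fix $\xi \in \stab(\fk{g} \ltimes \cl{F}, x_0)$ and prove that its transverse vanishing locus $Z := \{x \in \ol{L} \mid a(\xi)_x = 0\}$ is all of $\ol{L}$. I will argue that $Z$ is nonempty, closed, and open, and then conclude by connectedness of $\ol{L}$. Connectedness holds because, by Lemma \ref{leaf-closure}, $\ol{L}$ is an embedded submanifold, and it is the closure of the connected leaf $L$. Nonemptiness is immediate, as $x_0 \in Z$. Closedness holds because $a(\xi)$ determines a smooth section of the normal bundle $N\cl{F}_{\ol{L}}$, whose zero set is closed. All the work is in proving that $Z$ is open.

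For openness I would combine three facts. First, by Lemma \ref{leaf-wise-stabilizer} the stabilizer is constant along the leaves of $\cl{F}_{\ol{L}}$, so $Z$ is $\cl{F}_{\ol{L}}$-saturated: if $y \in Z$, then the whole leaf $L_y \subseteq \ol{L}$ lies in $Z$. Second, since $(M, \cl{F})$ is Killing, the structure algebra $\fk{g}$ is abelian by Theorem \ref{Molino-sheaf}, so for each $\eta \in \fk{g}$ we have $\cl{L}_{a(\eta)} a(\xi) = [a(\eta), a(\xi)] = a([\eta, \xi]) = 0$; hence $a(\xi)$ is invariant under the local flow $\phi^\eta_t$ of any foliate representative of $a(\eta)$, and in particular $\phi^\eta_t$ preserves $Z$. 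Third, the leaf closure $(\ol{L}, \cl{F}_{\ol{L}})$ carries the natural transverse $\fk{g}$-action, under which it is a single orbit of the Molino sheaf (Theorem \ref{Molino-sheaf}) and a complete $\fk{g}$-Lie foliation; consequently the vector fields $a(\eta)$, $\eta \in \fk{g}$, span the normal bundle $N\cl{F}_{\ol{L}}$ at every point of $\ol{L}$.

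Granting these, openness would follow by a standard orbit argument: fix $y \in Z$ and a basis $\eta_1, \dots, \eta_r$ of $\fk{g}$, and consider the map sending $(t_1, \dots, t_r)$ and a point $z$ near $y$ in the leaf $L_y$ to $\phi^{\eta_1}_{t_1} \circ \cdots \circ \phi^{\eta_r}_{t_r}(z)$. Its differential at $(0, y)$ is surjective, because the leafwise directions together with the $a(\eta_i)_y$ span $T_y\ol{L}$; hence its image is an open neighbourhood of $y$ in $\ol{L}$. By the first two facts this image lies in $Z$, so $Z$ is open. I expect the main obstacle to be the rigorous justification of the second fact, namely that the flow of a \emph{foliate representative} of $a(\eta)$ preserves the \emph{transverse} condition $a(\xi)_x = 0$; this reduces to checking that the induced normal flow fixes the transverse field $a(\xi)$, which is exactly the content of $\cl{L}_{a(\eta)} a(\xi) = 0$ and is where abelianness of $\fk{g}$ is indispensable. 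With $Z$ nonempty, closed, and open in the connected $\ol{L}$, we conclude $Z = \ol{L}$, proving $\xi \in \stab(\fk{g} \ltimes \cl{F}, x)$ for every $x \in \ol{L}$.
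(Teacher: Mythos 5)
Your overall architecture --- an open-closed argument on the connected manifold $\ol{L}$ for the transverse vanishing locus $Z$ --- differs from the paper's proof, which lifts everything to the transverse orthonormal frame bundle $P$: there the hypothesis becomes the statement that $\pi^\sharp a(\xi)$ agrees at a point $\tilde{x}_0 \in \ol{L_P}$ with a vertical fundamental vector field $\bar{\lambda}_P$, and the difference is shown to vanish identically by projecting to a local leaf space of $\ol{L_P}$, where it commutes with an honest transverse parallelism (the $\fk{g}$-Lie structure of $(\ol{L_P}, \cl{F}_P|_{\ol{L_P}})$) and vanishes at one point. Your facts one and two (saturation via Lemma \ref{leaf-wise-stabilizer}, and flow-invariance of $Z$ via abelianness of $\fk{g}$) are correct and correctly justified.

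The gap is in your third fact. The claim that $(\ol{L}, \cl{F}_{\ol{L}})$ is a complete $\fk{g}$-Lie foliation via the action $a$ is false in general, and cannot be used here: it would mean the fields $a(\eta)$ form a transverse parallelism of $\ol{L}$, i.e.\ are everywhere linearly independent in $N\cl{F}_{\ol{L}}$, which forces every stabilizer to be trivial and makes the lemma vacuous (only $\xi = 0$ would satisfy the hypothesis). What is true --- and what the paper proves in Theorem \ref{thm:1} \emph{using} this very lemma --- is that $(\ol{L}, \cl{F}|_{\ol{L}})$ is a complete $\fk{h}$-Lie foliation for $\fk{h} = \fk{g}/\fk{k}$; citing that here is circular. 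Your other citation, Theorem \ref{Molino-sheaf}(a) ($\ol{L}$ is a single orbit of the Molino sheaf), is the right ingredient, but ``orbit'' does not by itself yield the pointwise spanning $T_y\cl{F} + \spn\{a(\eta)_y \mid \eta \in \fk{g}\} = T_y\ol{L}$ that your submersion argument needs: for a singular family of vector fields, the tangent space of an orbit can strictly exceed the pointwise span of the family (e.g.\ the orbit of $\partial_x$ and $f(x)\partial_y$ with $f(0)=0$ through the origin is all of $\R^2$, while the span along the $y$-axis is one-dimensional). To close the gap you must show the span is invariant under the pushforwards appearing in the orbit theorem --- which your fact two (commutativity, plus invariance of $T\cl{F}$ and of transverse classes under leafwise and foliate flows) does supply, but this step must be made explicit --- or obtain the spanning as the paper implicitly does, by pushing the transverse parallelism on $\ol{L_P}$ down along the constant-rank surjection $\pi: \ol{L_P} \to \ol{L}$ of Lemma \ref{leaf-closure}. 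With either repair (and after fixing the minor point that your basis of $\fk{g}$ has $\dim \fk{g}$ elements, not $r$ of them), the rest of your argument goes through.
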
  
  \begin{proof}
    Keep the notation from Section \ref{Molino-theory}. Choose a point $\tilde{x}_0$ in the transverse orthonormal bundle $P$ such that  $\pi(\tilde{x_0})=x_0$. Let $L_P$ be a leaf of $\mathcal{F}_P$ that passes through $\tilde{x}_0$, and let $\overline{L_P}$ be the leaf closure of $L_P$. Note that by Lemma \ref{leaf-closure} the map $\pi: \overline{L_P}\rightarrow \overline{L}$ projects $\overline{L_P}$ onto $\overline{L}$. 

For  $\xi\in \mathfrak{g}$, the image of $a(\xi)$ under the lifting homomorphism \ref{lifting-homo} is a global transverse Killing vector field that is tangent to $\overline{L_P}$.  Now suppose that $\xi \in \text{stab}( \mathfrak{g}\ltimes \mathcal{F}, x_0)$.  Write $\bar{\xi}_P := \pi^{\sharp}a(\xi)$. Then there exists $\lambda \in \mathfrak{t}=\text{Lie}(K) $ such that $(\bar{\xi}_P)_{\tilde{x}_0}= (\bar{\lambda}_P)_{\tilde{x_0}} $, where $\bar{\lambda}_P$ is the transverse vector field represented by the fundamental vector field on $P$ corresponding to $\lambda \in \mathfrak{t}$.  To finish the proof, it suffices to show that $(\bar{\xi}_P)_{\tilde{x}}= (\bar{ \lambda}_P)_{\tilde{x}}$ for each $\tilde{x} \in \overline{L_P}$.

Choose a basis $\{\xi_1, \cdots, \xi_q\}$ of the structure Lie algebra $\mathfrak{g}$ such that $\{\pi^{\sharp}a(\xi_1), \cdots, \pi^{\sharp}a(\xi_q)\}$ are global sections of the Molino sheaf, and choose a connected simple open set $U \subseteq \ol{L_P}$ that contains $\tilde{x}_0$. Let $p: U\rightarrow \overline{U}$ be the local quotient map onto the leaf space $\ol{U} = U/\cl{F}_P|_U$, and let $\bar{x}_0=p(\tilde{x}_0)$. Then 
$\{ \pi^{\sharp}a(\xi_1), \cdots, \pi^{\sharp}a(\xi_q)\}$ project to $\overline{U}$ as $q$ many linearly independent vector fields $\{\xi_{1, \overline{U}}, \cdots,\xi_{q, \overline{U}}\}$ which satisfy $[\xi_{i, \overline{U}}, \xi_{j, \overline{U}}]=0$ for $1\leq i, j\leq q$, while $\bar{\xi}_{P}-\bar{\lambda}_P$ projects to $\overline{U}$ as a vector field $\zeta$ that satisfies $\zeta_{\bar{x}_0}=0$. By definition of the Molino sheaf, $[\pi^{\sharp}a(\xi_i), \bar{\xi}_P-\bar{\lambda}_P]=0$ for each $1\leq i\leq q$. It follows that $[\xi_{i, \overline{U}}, \zeta]=0$ for all $1\leq i\leq q$.  Since $\overline{U}$ is a $q$-dimensional connected manifold, we conclude that $\zeta$ vanishes everywhere on $\overline{U}$, and so $\bar{\xi}_P=\bar{\lambda}_P$ on $U$. This finishes the proof, since $U$ was arbitrarily chosen.
\end{proof}

Now we prove our main result.
\begin{thm}\label{thm:1}
  Suppose that $(M, \cl{F})$ is a complete Killing foliation of a connected manifold with structure algebra $\fk{g}$, such that the transverse action of $\fk{g}$ is also complete. Then $M/\cl{F}$ is a diffeological quasifold.

  More precisely, for each leaf $L$, with closure $X$, take a universal cover $p: \hat{X} \to X$. Let $\hat{E}$ be the pullback of $NX \to X$ along $p$. Then $(NX, \cl{F}_{NX}, \pr_2:\hat{E} \to NX)$ is a developable foliation, say with developing map denoted $q:\hat{E} \to E$. Furthermore, $(X, \cl{F}|_X)$ is a complete $\fk{h}$-Lie foliation, where
  \begin{equation*}
    \fk{h} := \fk{g}/\fk{k}, \text{ and } \fk{k}:=\{\xi \in \fk{g} \mid \xi \in \stab(\fk{g} \rtimes \cl{F}, x) \text{ for some } x \in X\}.
  \end{equation*}
Let $H$ be the connected and simply-connected Lie group with Lie algebra $\fk{h}$. Then $H = (\R^r, +)$, where $r$ is the codimension of $\cl{F}|_X$ in $X$. The space $E$ is a vector bundle over $H$ of rank equal to the codimension of $X$ in $M$. The action groupoid $\pi_1(X) \ltimes E$ is isomorphic to an action groupoid $\pi_1(X) \ltimes (H \times E_0)$, where $E_0$ is a fiber of $E$, and $\pi_1(X)$ acts affinely.
\end{thm}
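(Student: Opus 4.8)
The plan is to reduce the entire statement to the transverse geometry of a single leaf closure $X=\ol L$, develop that closure with Fedida's theorem, and then use the vector-bundle structure of $NX$ together with the linearity of the normal lift to force the deck action to be affine. First I would set up the geometry of $X$. Since $\cl F$ is Killing, Theorem~\ref{Molino-sheaf} gives that $\fk g$ is abelian and that the orbits of the natural transverse $\fk g$-action are the leaf closures; by Lemma~\ref{leaf-closure}, $X=\ol L$ is an embedded, closed, foliated submanifold, and the fields $a(\xi)$ are tangent to $X$. Lemma~\ref{killing-action-isotropy} shows $\stab(\fk g\ltimes\cl F,x)$ is constant along $X$, so $\fk k$ is a genuine (common) stabilizer subalgebra and $\fk h=\fk g/\fk k$ is a well-defined abelian Lie algebra. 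Completeness of the transverse action, together with tangency to $X$, lets me restrict each $a(\xi)$ to a complete foliate field on $X$; at each $x\in X$ the evaluation $\xi\mapsto a(\xi)_x$ has kernel exactly $\fk k$ and, since the Molino orbit through $x$ is the whole closure $X=\ol{L_x}$ (every leaf in it being dense), image all of $N_x(\cl F|_X)$. This is the dimension count $\dim\fk h=\dim\fk g-\dim\fk k=r$, and it exhibits a complete transverse parallelism whose members commute (abelianness gives structure constants $c_{ij}^p=0$). Hence $(X,\cl F|_X)$ is a complete $\fk h$-Lie foliation in the sense of Definition~\ref{Lie-foliation}, and $H=\R^r$.

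Next I would develop. Proposition~\ref{prop:lifted-fedida} applies to $(X,\cl F|_X)$: it is developable along the universal cover $p:\hat X\to X$, with developing map $q_X:\hat X\to H$, deck group $\pi_1(X)$ acting on $H$ through $\psi:\pi_1(X)\to\Gamma\le H$ by translations, and with \emph{simply-connected} leaves of $\hat{\cl F}$. Pulling the foliated vector bundle $NX\to X$ (Example~\ref{ex:lifted-foliation}) back along $p$ produces $\hat E=p^*NX$, and $\pr_2:\hat E\to NX$ is again a Galois cover with group $\pi_1(X)$. Because the leaves of $\hat{\cl F}$ are simply-connected, $\hat E\to\hat X$ is a transversal bundle by Example~\ref{ex:simply-connected-leaves-transversal}, so Lemma~\ref{quotient-bundle} shows $(\hat E,\hat{\cl F}_{\hat E})$ is simple with leaf space a vector bundle $E\to H$ of rank $k:=\operatorname{codim}(X,M)$. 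Thus $(NX,\cl F_{NX},\pr_2)$ is developable with developing map $q:\hat E\to E$.

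The crux, and the step I expect to be the main obstacle, is to upgrade the $\pi_1(X)$-action on $E$ from merely smooth and translation-covering to genuinely \emph{affine}. The key input is that the normal lift in Example~\ref{ex:lifted-foliation} is realized by the flows $(\phi_t)_*$, which are \emph{fiberwise-linear} bundle automorphisms of $NX$. Restricting to $\fk h$, these develop to $r$ commuting complete fiberwise-linear vector fields on $E$ covering the constant translation fields on $H=\R^r$; their joint flow is an $H$-action on $E$ by fiberwise-linear bundle automorphisms covering translation of $H$ on itself and simply transitive on the base. Flowing the fiber $E_0$ over $0\in H$ then yields a trivialization $E\cong H\times E_0$ that is linear on fibers and in which the $H$-action is $(h,(g,w))\mapsto(h+g,w)$. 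Now $\pi_1(X)$ acts on $E$ by fiberwise-linear bundle automorphisms (descending from the fiberwise-identity deck action on $\hat E$), covers translation by $\psi(\tau)$, and — the transverse fields being globally defined on $M$ and hence $\pi_1(X)$-invariant — commutes with the $H$-action. Writing $\tau\cdot(g,w)=(\psi(\tau)+g,C_\tau(g,w))$ and imposing commutation with $H$-translations forces $C_\tau$ to be independent of $g$, while fiberwise-linearity makes it linear; hence $\tau\cdot(g,w)=(\psi(\tau)+g,B_\tau w)$ is affine. This gives the isomorphism $\pi_1(X)\ltimes E\cong\pi_1(X)\ltimes(H\times E_0)$ with $\pi_1(X)$ acting affinely.

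Finally I would assemble the quasifold conclusion. By Proposition~\ref{tubular} the closed foliated submanifold $X$ has a $\fk g$-invariant foliate tubular neighbourhood $\phi:NX\hookrightarrow M$ onto a saturated open set, so a neighbourhood of the point $[L]$ in $M/\cl F$ is diffeomorphic to $NX/\cl F_{NX}$. Since $\pr_2$ is Galois, Proposition~\ref{prop:develop-leaf-space} identifies $NX/\cl F_{NX}\cong E/\pi_1(X)=(H\times E_0)/\pi_1(X)=\R^{r+k}/\pi_1(X)$, where $\pi_1(X)$ acts through a countable subgroup of $\Aff(\R^{r+k})$. As $L$ was arbitrary and $M/\cl F$ is second-countable (being an open continuous image of $M$), these are quasifold charts covering $M/\cl F$, verifying Definition~\ref{def:quasifold}.
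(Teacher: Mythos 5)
Your proposal is correct and follows essentially the same route as the paper's proof: define $\fk{k}$ via Lemma \ref{killing-action-isotropy}, realize $(X,\cl{F}|_X)$ as a complete $\fk{h}$-Lie foliation, develop it via Proposition \ref{prop:lifted-fedida}, obtain the vector bundle $E=\hat{E}/\cl{F}_{\hat{E}}$ over $H$ using Example \ref{ex:simply-connected-leaves-transversal} and Lemma \ref{quotient-bundle}, trivialize $E\cong H\times E_0$ by the $H$-action coming from the flows of the transverse $\fk{h}$-fields, and assemble the quasifold charts with Propositions \ref{tubular} and \ref{prop:develop-leaf-space}. Your affineness step (commutation of the deck action with the $H$-action forces the fibre part to be independent of the base point, and fibrewise linearity does the rest) is precisely the content implicit in the paper's assertion that its trivialization $\Phi(e)=(h,F(h^{-1})(e))$ is $\pi_1(X)$-equivariant, so you have merely made that point more explicit.
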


Before we prove this theorem, we give an immediate corollary.

\begin{cor}
  \label{cor:morita-equivalence}
   In the setting of Theorem \ref{thm:1}, there exists a tubular neighbourhood $V$ of $L$ such that the restricted holonomy groupoid $\Hol(\cl{F})|_V$ is Morita equivalent to $\pi_1(X)_{\text{eff}} \ltimes (H \times E_0)$, where $\pi_1(X)_{\text{eff}}$ is the quotient of $\pi_1(X)$ by the normal subgroup of elements which act trivially on $E$.
 \end{cor}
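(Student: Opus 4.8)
The plan is to identify an open neighbourhood of $L$ with the total space of the normal bundle $NX$, transport the holonomy groupoid across this identification, and then feed the developable structure of Theorem \ref{thm:1} into the machinery of Section \ref{sec:leaf-space-devel}.

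First I would construct the neighbourhood $V$. The leaf closure $X = \ol{L}$ is a foliated closed submanifold of $M$ by Lemma \ref{leaf-closure}, so Proposition \ref{tubular} supplies a $\fk{g}$-invariant tubular neighbourhood $\phi:NX \hookrightarrow M$; set $V := \phi(NX)$, an open neighbourhood of the whole of $X$, and in particular of $L$. Being a foliate embedding with open image, $\phi$ is a foliated diffeomorphism from $(NX, \cl{F}_{NX})$ onto $(V, \cl{F}|_V)$, and therefore induces an isomorphism of holonomy groupoids $\Hol(\cl{F}_{NX}) \cong \Hol(\cl{F}|_V)$. Since $V$ is open in the Riemannian foliation $(M, \cl{F}, g)$, the restriction $(V, \cl{F}|_V, g|_V)$ is Riemannian, and pulling back along $\phi$ shows $(NX, \cl{F}_{NX})$ carries a transverse Riemannian metric.

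Next I would apply Corollary \ref{cor:effective-germ-determined-equivalence}. By Theorem \ref{thm:1}, $(NX, \cl{F}_{NX}, \pr_2:\hat{E} \to NX)$ is a developable foliation with developing map $q:\hat{E} \to E$. The total space $\hat{E}$ is a vector bundle over the simply-connected manifold $\hat{X}$, so it is connected and simply-connected; hence $\pr_2$ is the universal cover of $NX$, in particular a connected Galois cover, with deck group $\pi_1(X)$. Together with the transverse Riemannian metric from the previous step, the hypotheses of Corollary \ref{cor:effective-germ-determined-equivalence} are met, and it yields that $\Hol(\cl{F}_{NX})$ is Morita equivalent to $\pi_1(X)_{\text{eff}} \ltimes E$, where $\pi_1(X)_{\text{eff}}$ is the quotient of $\pi_1(X)$ by the subgroup of elements acting trivially on $E$.

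Finally I would pass to the affine model and conclude. Theorem \ref{thm:1} furnishes a $\pi_1(X)$-equivariant diffeomorphism $E \cong H \times E_0$, and hence an isomorphism of action groupoids $\pi_1(X) \ltimes E \cong \pi_1(X) \ltimes (H \times E_0)$. Equivariance matches the subgroup acting trivially on $E$ with the subgroup acting trivially on $H \times E_0$, so this isomorphism descends to $\pi_1(X)_{\text{eff}} \ltimes E \cong \pi_1(X)_{\text{eff}} \ltimes (H \times E_0)$. Composing the isomorphism of the first step, the Morita equivalence of the second, and this last isomorphism---and using that isomorphisms of Lie groupoids are Morita equivalences and that Morita equivalence is transitive---gives that $\Hol(\cl{F})|_V := \Hol(\cl{F}|_V)$ is Morita equivalent to $\pi_1(X)_{\text{eff}} \ltimes (H \times E_0)$, as claimed. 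The step I expect to demand the most care is checking that $(NX, \cl{F}_{NX})$ really is Riemannian and that its developable structure from Theorem \ref{thm:1} satisfies every hypothesis of Corollary \ref{cor:effective-germ-determined-equivalence}, in particular the identification of the deck group of $\pr_2$ with $\pi_1(X)$ together with the connectedness and Galois property of that cover. Should one wish to sidestep the Riemannian hypothesis, an alternative route runs through Proposition \ref{prop:morita-to-pseudogroup}, giving a Morita equivalence with the germ groupoid of the $\pi_1(X)$-action on $E$, followed by Lemma \ref{lem:germ-determined} to upgrade this to the action groupoid; but that lemma again requires an isometric action on the connected manifold $E$, so the Riemannian input cannot be avoided.
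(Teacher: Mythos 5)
Your proposal is correct and follows essentially the same route as the paper: choose the $\fk{g}$-invariant tubular neighbourhood from Proposition \ref{tubular} to identify $(V,\cl{F}|_V)$ with $(NX,\cl{F}_{NX})$, note the latter is Riemannian, and apply Corollary \ref{cor:effective-germ-determined-equivalence} to the developable foliation $(NX,\cl{F}_{NX},\pr_2)$ from Theorem \ref{thm:1}, finishing with the equivariant trivialization $E \cong H \times E_0$. The paper's proof is just a terser version of this; your additional verifications (that $\hat{E}$ is connected and simply-connected, so $\pr_2$ is a connected Galois cover with deck group $\pi_1(X)$) are details the paper leaves implicit.
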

 \begin{proof}
By Proposition \ref{tubular}, choose a tubular neighbourhood $V$ so that the foliation $(V, \cl{F}|_V)$ is isomorphic to $(NX, \cl{F}_{NX})$. Then $\Hol(\cl{F}|_V) \cong \Hol(\cl{F})|_V \cong \Hol(\cl{F}_{NX})$. Then, since $(NX, \cl{F}_{NX})$ is Riemannian, we simply apply Corollary \ref{cor:effective-germ-determined-equivalence} to the developable foliation $(NX, \cl{F}_{NX}, \pr_2)$ from Theorem \ref{thm:1}.
 \end{proof}
Now we prove the theorem.  
 \begin{proof}[proof of Theorem \ref{thm:1}]
   First, assuming the more precise clause of Theorem \ref{thm:1}, we will show that $M/\cl{F}$ is a diffeological quasifold. Let $L \in \cl{F}$, with closure $X$, and take a universal cover $p:\hat{X} \to X$. Apply Proposition \ref{prop:develop-leaf-space} to $(NX, \cl{F}_{NX}, \pr_2)$ to see that $NX/\cl{F}_{NX}$ is diffeomorphic to $E/\pi_1(X)$. But then $E/\pi_1(X)$ is diffeomorphic to $(H \times E_0)/\pi_1(X)$, which is a model quasifold. We conclude that $NX/\cl{F}_{NX}$ is a diffeomorphic to a model diffeological quasifold. By Proposition \ref{tubular}, the foliation $(NX, \cl{F}_{NX})$ is isomorphic to $(V, \cl{F}|_V)$ for a tubular neighbourhood $V$ of $L$. Therefore $NX/\cl{F}|_{NX} \cong V/\cl{F}|_V$, and $M/\cl{F}$ is a diffeological quasifold.
   
   Now we prove the precise statement in Theorem \ref{thm:1}. Fix a leaf $L$, with closure $X$. Let
   \begin{equation}\label{eq:stab-lie-alg}
     \fk{k} := \{\xi \in \fk{g} \mid \xi \in \stab(\fk{g} \rtimes \cl{F}, x) \text{ for some } x \in X\}.
   \end{equation}
By Lemma \ref{killing-action-isotropy}, if $\xi \in \stab(\fk{g} \rtimes \cl{F}, x)$ for some $x \in X$, then it is in $\stab(\fk{g} \rtimes \cl{F}, x')$ for every $x' \in X$. Therefore, $\fk{k}$ is a well-defined Lie subalgebra of $\fk{g}$. Its quotient $\fk{h} := \fk{g}/\fk{k}$ is a Lie algebra, and the foliation $(X, \cl{F}|_X)$ is a complete $\fk{h}$-Lie foliation. Specifically, if we identify $\fk{h}$ with a complement to $\fk{k}$ in $\fk{g}$, so that $\fk{g} = \fk{k} \oplus \fk{h}$, and we fix a basis $\{\xi_1,\ldots, \xi_r\}$ of $\fk{h}$, then the transverse vector fields corresponding to the $\xi_i$ (under the transverse action of $\fk{g}$ on $X$) give a complete transverse $\fk{h}$-Lie parallelism for $\cl{F}|_X$. In particular, $r$ is the codimension of $\cl{F}|_X$ in $X$.

Being a complete $\fk{h}$-Lie foliation, we apply Proposition \ref{prop:lifted-fedida} to $(X, \cl{F}|_X)$. First, to keep notation simple, rename $\cl{F}|_X$ to $\cl{F}$. Fix $H$ to be the connected and simply-connected Lie group whose Lie algebra is $\fk{h}$, and choose a universal cover $p:\hat{X} \to X$. By Proposition \ref{prop:lifted-fedida}: $(X, \cl{F}, p)$ is developable, and moreover the leaves of $\hat{\cl{F}} := p^{-1}(\cl{F})$ are simply-connected.

   % Let $\ti{p}:\hat{X} \to \ti{X}$ be a universal cover of $\ti{X}$. Pulling back along $\ti{p}$, we upgrade the diagram in \eqref{eq:4} to the diagram
   % \begin{equation}
   %   \label{eq:5}
   %   \begin{tikzcd}
   %     & \pi_1(X) \circlearrowright (\hat{X}, \hat{\cl{F}}) \ar[ddr, bend left, "{\hat{p}}"] \ar[ddl, bend right, "\rho"'] \ar[d, "{\ti{p}}"]& \\
   %     & (\ti{X}, \pr_1^{-1}(\cl{F})) \ar[dl, "\pr_2"'] \ar[dr, "\pr_1"] & \\
   %     \Gamma \circlearrowright H & & (X, \cl{F}).
   %   \end{tikzcd}
   % \end{equation}
   % Here $\rho$ is equivariant with respect to the surjective homomorphism $\psi: \pi_1(X, x_0) \to \Gamma$ induced by the short exact sequence
   % \begin{equation*}
   %   1 \to \pi_1(\ti{X}) \to \pi_1(X, x_0) \to \Gamma \to 1
   % \end{equation*}
   % (remember $\pr_1:\ti{X} \to X$ is a normal covering), the foliation $\hat{\cl{F}}$ is the pullback of $\cl{F}$ by $\hat{p} := \ti{p} \circ \pr_1$, and the fibers of $\rho := \ti{p} \circ \pr_2$ are the leaves of $\hat{\cl{F}}$. By the discussion at the end of Subsection \ref{Darboux-covering}, the leaves of $\hat{\cl{F}}$ are simply-connected.

   Now let $NX$ be the normal bundle of $X \subseteq M$, equipped with the lifted foliation $\cl{F}_{NX}$.  Let $\hat{E}$ be the pullback of $NX$ along $p:\hat{X} \to X$, and let $\cl{F}_{\hat{E}}$ be the pullback of $\cl{F}_{NX}$ by the second projection $\pr_2:\hat{E} \to NX$. The foliation $(\hat{E}, \cl{F}_{\hat{E}})$ makes the vector bundle $\hat{E} \to \hat{X}$ foliated. Moreover, by Example \ref{ex:simply-connected-leaves-transversal}, this vector bundle is transversal, because $(\hat{X}, \hat{\cl{F}})$ has simply-connected leaves. Since $(\hat{X}, \hat{\cl{F}})$ is also simple and $\hat{E} \to \hat{X}$ is transversal, by Lemma \ref{quotient-bundle} the leaf space $E := \hat{E}/\cl{F}_{\hat{E}}$ is naturally a vector bundle over $H$. Keeping track of dimensions, we see $E$ has rank equal to the codimension of $X$ in $M$. We have the diagram
   \begin{equation}\label{eq:7}
     \begin{tikzcd}
       & \hat{E} \ar[dl, "\pr_2"'] \ar[dr, "q \text{ := quotient}"] & \\
       NX & & E,
     \end{tikzcd}
   \end{equation}
   and we see that the foliation $(NX, \cl{F}_{NX}, \pr_2)$ is developable.
   
   The group $\pi_1(X)$ of deck transformations of $p$ coincides with the group of deck transformations of $\pr_2:\hat{E} \to NX$, by setting
   \begin{equation*}
     \tau \cdot (\hat{x}, e) := (\tau \cdot \hat{x}, e).
   \end{equation*}
   This action descends to the leaf space $E$, and $q:\hat{E} \to E$ is $\pi_1(X)$-equivariant. We will now describe how to identify $E$ with Cartesian space. First, consider the transverse action $a:\fk{h} \to \fk{X}(NX, \cl{F}_{NX})$. For $\xi \in \fk{h}$, represent $a(\xi)$ by a complete foliate vector field $\xi_{NX}$, with flow $\varphi_t$. This is a one-parameter group of foliated diffeomorphisms of $(NX, \cl{F}_{NX})$, which corresponds to a one-parameter group of vector bundle automorphisms (not necessarily lifting the identity) of $E$; we take the action of $\xi$ on $E$ to be given by the vector field $\xi_E$ corresponding to this one-parameter group. Thus we have an action of $\fk{h}$ on $E$.  By the Lie-Palais theorem, the $\fk{h}$ action exponentiates to an action of $H$ on $E$. Denote the action map by
   \begin{equation*}
   F:H \to \Aut(E). 
   \end{equation*}

   Because $\fk{h} = \fk{g}/\fk{k}$ is abelian, $H$ is the additive group $(\R^r, +)$. Let $E_0$ be the fiber of $E$ over $0 \in H$, which is a vector space. We have a global trivialization of $E$ given by
   \begin{equation*}
     \Phi:E \to H \times E_0, \quad \Phi(e) := (h, F(h^{-1})(e)), \text{ where } e \in E_h.
   \end{equation*}
   The group $\pi_1(X)$ acts on $E_0$ linearly by
   \begin{equation*}
     \tau \cdot v := F(\psi(\tau)^{-1})(\tau \cdot v),
   \end{equation*}
where $\psi:\pi_1(X) \to H$ is defined in the end of Subsection \ref{Darboux-covering} and the right $\tau \cdot v$ denotes the action of $\pi_1(X)$ on $E$. Then, the trivialization $\Phi$ is $\pi_1(X)$-equivariant, with respect to the action of $\pi_1(X)$ on $H \times E_0$ given by
   \begin{equation*}
     \tau \cdot (h, v) := (\psi(\tau) + h, \tau \cdot v)
   \end{equation*}
   This action is an affine action of the countable group $\pi_1(X)$ on $H \times E_0$. Noting that $\Phi$ induces an isomorphism $\pi_1(X) \ltimes E \cong \pi_1(X) \ltimes (H \times E_0)$ completes the proof.
 \end{proof}

 If we do not require that $(M, \cl{F})$ is Killing or Riemannian, then Theorem \ref{thm:1} will not hold. The foliation described in \cite[Section 7]{KM22} is an explicit example. It is the suspension of a non-identity diffeomorphism $h: \R \to \R$ whose jet at $0$ coincides with the jet of the identity at $0$. See \cite[Proposition 7.2]{KM22} for the details, but the essential argument is that if the leaf space $M/\cl{F} \cong \R/h$ were a diffeological quasifold, then any lift of the identity $\R/h \to \R/h$ whose jet at $0$ coincides with the identity, must be the identity. This is because affine transformations are determined by their 1-jet at a point. In the upcoming Subsection \ref{sec:comp-haefl-model}, we show that Theorem \ref{thm:1} may fail even if we require $(M, \cl{F})$ to be Riemannian. 

 We now apply our theorem to some examples. Throughout, $(M, \cl{F})$ denotes a complete Killing foliation with structure algebra $\fk{g}$, such that the transverse action of $\fk{g}$ is complete.
 
 \begin{ex}\label{ex:dense-leaf}
   Suppose that $(M, \cl{F})$ has a dense leaf $L$. Then its closure $X$ is all of $M$, and the normal bundle $NX$ is trivial. Let $p:\hat{M} \to M$ be a universal cover of $M$. Then the bundles $\hat{E}$ and $E$ are also trivial, so Theorem \ref{thm:1} gives that $(M, \cl{F}, p)$ is developable, with developing map $q:\hat{M} \to H$. We have $M/\cl{F} \cong H/\pi_1(X)$, and the action of $\pi_1(X)$ on $H$ is isomorphic to an affine action on $H$. Furthermore, since $H$ is the leaf space of the Darboux cover $(\ti{M}, \ti{\cl{F}})$, and $\pi_1(X)$ acts on $H$ through $\Gamma \leq H$, to compute $H/\pi_1(X)$ it suffices to find the Darboux cover of $(M, \cl{F})$.

   As a special case, consider the torus $T^2 = \R^2/\Z^2$, and $\cl{F}_\lambda$ the foliation on $T^2$ induced by the foliation of $\R^2$ by lines of slope $\lambda$, for $\lambda$ irrational. This is a complete $\fk{h}$-Lie foliation, with $\fk{h} := (\R, +)$. Working through Subsection \ref{Darboux-covering}, using the transverse parallelism induced by the foliate vector field $Z_1 := a\frac{\partial}{\partial x} + b \frac{\partial}{\partial y}$ where $\frac{b}{a} = -\frac{1}{\lambda}$ and $a^2+b^2 = 1$, we find that the Darboux cover is
   \begin{equation*}
     \pr_1: \hat{X} := \{([at - bs, bt + as], t)\} \to T^2.
   \end{equation*}
   The group $\pi_1(T^2)$ acts on $\R$ through the subgroup $\Gamma \leq \R$ consisting of those $g$ fixing $\hat{X}$. Using the condition that
   \begin{equation*}
     g \cdot \hat{X} = \hat{X} \iff g \cdot ([0,0], 0) = ([0,0],g) \in \hat{X},
   \end{equation*}
we find that $\Gamma = a \Z + b \Z$. Therefore $T^2/\cl{F}_\lambda \cong \R/\Gamma$, and one can also show that $\R/\Gamma \cong \R/(Z+ \lambda \Z)$. While we worked in dimension 2 for convenience, this argument extends to higher dimensions when we replace $T^2$ with $T^n$, and the foliation $\cl{F}_\lambda$ by $\cl{F}_H$, where $H$ is a hyperplane in $\R^n$ whose parallel translates determine the foliation $(T^n, \cl{F}_H)$. In this case, we find that $T^n/\cl{F}_H$ is diffeomorphic to $\R/\Gamma$, for appropriately chosen $\Gamma$. This reproduces the results in \cite{IL90}, from a different perspective.
 \end{ex}

 \begin{ex}
   Suppose that a leaf $L$ of $(M, \cl{F})$ is compact. Its closure $X$ is again $L$. Therefore the foliation $\cl{F}|_X$ has one leaf, namely $L$, and its lift $\hat{\cl{F}}$ to $\hat{X}$ also has one leaf, namely $\hat{X}$. It follows that $H = \hat{X}/\hat{\cl{F}}$ is the trivial group. The vector bundle $NX$ is the restriction $N\cl{F}|_L$. The vector bundle $E \to H$ is a vector space $E \to \{*\}$, which we can identify with $N_{x_0}L$ by
   \begin{equation*}
     N_{x_0}L \to \hat{E}/\cl{F}_{\hat{E}}, \quad v \mapsto L_{(\hat{x}_0, v)},
   \end{equation*}
   where $\hat{x}_0$ is the constant loop at $x_0$. Under this identification, $\pi_1(X, x_0)$ acts on $N_{x_0}L$ by linearized holonomy:
   \begin{equation*}
     \tau \cdot v := d_{x_0}\Hol(\tau)(v).
   \end{equation*}
   Therefore, a neighbourhood of $L$ in the leaf space $M/\cl{F}$ is diffeomorphic to $N_{x_0}L/\pi_1(X, x_0) = N_{x_0}L/\Hol(L, x_0)$.
   % We expect that we can enhance this argument to extend the Reeb stability theorem to compact leaves of Killing foliations that do not necessarily have finite holonomy, but we leave this for another paper.
   By \cite[Theorem 2.6]{MM03}, if every leaf of a Riemannian foliation is compact, then every leaf has finite holonomy. Therefore in this case each $N_{x_0}L/\Hol(L, x_0)$ is a model orbifold, and $M/\cl{F}$ is an orbifold.
 \end{ex}
 
 \subsection{Comparison to Haefliger's model}
\label{sec:comp-haefl-model}

In \cite{H88}, Haefliger gave a model describing an arbitrary complete pseudogroup of isometries. In this subsection, we describe Haefliger's model in the Killing case, and compare it to our Theorem \ref{thm:1}. Fix a Killing foliation $(M, \cl{F})$, with structure group $\fk{g}$. Take a complete transversal $S$ to $\cl{F}$, and $x \in S$. Haefliger builds his model from the following ingredients:

\begin{itemize}
 \item The Lie algebra $\fk{g}$, which we understand as $\mbf{C}(M, \cl{F})_x \cong \mbf{C}(S, \Psi_S(M, \cl{F}))_x$ (see Subsection \ref{sec:molin-sheaf-pseud}).
 \item The action of $K := (\ol{\Psi_S(M, \cl{F})})_x$ on $\fk{g}$, as given in \eqref{eq:action-on-stalk}.
 \item The inclusion $i:\fk{k} \hookrightarrow \fk{g}$, where $\fk{k}$ is the Lie algebra of $K$.
 \item The normal space $V$ at $x$ to the orbit $\ol{\Psi_S(M, \cl{F})} \cdot x$. Note this orbit is also the intersection $\ol{L_x} \cap S$.
 \item The action of $K$ on $V$ by differentiation: $k \cdot v := k_* v$.
 \end{itemize}

 Haefliger constructs a pseudogroup equivalent to the restriction of $\Psi_S(M, \cl{F})$ to a tubular neighbourhood of the orbit $\Psi_S(M, \cl{F})\cdot x$ in the following way. Because $\fk{g}$ is abelian, we have $\fk{g} \cong \R^d$ for some $d$. Let $\ti{G} := (\R^d, +)$ be the unique simply-connected Lie group with Lie algebra $\fk{g}$. Then $\fk{k}$, being a subalgebra of $\fk{g}$, is isomorphic to $\R^m$ for some $m$. Let $K^\circ$ denote the identity component of $K$, and $\ti{K} := (\R^m, +)$ be the unique simply-connected Lie group with Lie algebra $\fk{k}$. We know $\ti{K}$ covers $K^\circ$; write $p:\ti{K} \to K^\circ$ for the covering map. Since $\fk{k} \cong \ti{K}$ and $\fk{g} \cong \ti{G}$, the inclusion $\iota: \fk{k} \hookrightarrow \fk{g}$ induces a homomorphism $j:\ti{K} \hookrightarrow \ti{G}$.

Since $K$ acts on $\fk{g}$, it also acts on $\ti{G}$, giving the semidirect product $\ti{G} \rtimes K$. Consider the quotient of $\ti{G} \rtimes K$ by the subgroup
 \begin{equation*}
   L := \{(j(k^{-1}), p(k)) \mid k \in \ti{K} \subseteq \ti{G}\}.
 \end{equation*}
 Under the conditions
 \begin{enumerate}[label = (\alph*)]
 \item $j(\ker p)$ is a discrete subgroup of $G_0$, and
   \item $\ker j \subseteq \ker p$,
 \end{enumerate}
which in this case hold because $j$ is an inclusion of $\R^m$ into $\R^n$, by \cite[Theorem 5.2]{H88}: the subgroup $L$ is closed, so we can form the quotient Lie group $G := (\ti{G} \rtimes K) / L$, and the map
 \begin{equation*}
   K \to G, \quad k \mapsto [e, k]
 \end{equation*}
 is a well-defined homomorphism and embedding of $K$ into $G$, so we view $K$ as a (compact) Lie subgroup of $G$. We therefore have the principal $K$ bundle $G \to G/K$, and combined with the representation of $K$ on $V$, we form the associated bundle $G \times_K V \to G/K$. Haefliger proves:
 \begin{itemize}
 \item the pseudogroup $\ol{\Psi_S(M, \cl{F})}$, restricted to a tubular neighbourhood, is equivalent to the pseudogroup generated by the action of $G$ on $G \times_K V$;
   \item the pseudogroup $\Psi_S(M, \cl{F})$, restricted to a tubular neighbourhood, is equivalent to the pseudogroup generated by the action of a dense subgroup $\Lambda$ of $G$ on $G \times_K V$. The group $\Lambda$ can be viewed as the fundamental group of $\Psi_S(M, \cl{F})$.
 \end{itemize}

 Now we compare our model with Haefliger's. First, Haefliger's Lie algebra $\fk{k}$ is exactly the Lie algebra $\fk{k}$ defined in \eqref{eq:stab-lie-alg}. Then Haefliger's $\fk{g}/\fk{k}$ coincides with our $\fk{h} = \fk{g}/\fk{k}$, and therefore we can identify $G/K$ and $H$. Indeed, $G/K$ is simply connected, being the quotient of $(\R^d, +)$ by a Lie subgroup, and its Lie algebra is $\fk{g}/\fk{k} \cong \fk{h}$. Furthermore, the vector bundle $G \times_K V$ has rank $\dim V$, which is also the rank of the bundle $E$ appearing in Theorem \ref{thm:1}. Since $G/K \cong H$ is contractible, $G\times_KV$ and $E$ are isomorphic vector bundles. However, whereas we have a global trivialization of $E$ that is $\pi_1(\ol{L})$-equivariant, we do not immediately find a $\Lambda$-equivariant global trivialization of $G \times_K V$. Indeed, the natural guess
 \begin{equation*}
   G\times_K V \to G/K \times V, \quad [g,v] \mapsto ([g],v)
 \end{equation*}
 is not well-defined if the action of $H$ on $V$ is non-trivial. Moreover, the action of $\Lambda$ on $G\times_K V$ is effective, but the action of $\pi_1(\ol{L})$ on $E$ is not generally effective.

 \begin{rem}
   If we drop the assumption that $\cl{F}$ is Killing, we can still carry through Haefliger's construction up to verifying conditions (a) and (b). In general, these conditions will not hold, and it takes Haefliger more work to get the local model of $\Psi_S(M, \cl{F})$. We saw above that a consequence of (a) and (b) is that the pseudogroup $\Psi_S(M, \cl{F})$ is locally equivalent to a pseudogroup generated by the action of a group, in Haefliger's case $\Lambda$ and in our case $\pi_1(\ol{L})$ (by Corollary \ref{cor:morita-equivalence}).\footnote{A Morita equivalence of \'{e}tale Lie groupoids induces an equivalence of their associated pseudogroups.} Haefliger shows in \cite[Corollary 5.6.2]{H88} that (a) and (b) are in fact equivalent to $\Psi_S(M, \cl{F})$ being locally equivalent to a pseudogroup generated by a group action. This means that any complete Riemannian foliation whose holonomy pseudogroup fails (a) or (b) cannot satisfy the conclusion of Corollary \ref{cor:morita-equivalence} for any group.
 \end{rem}
 
\section{Diffeological quasifolds as leaf spaces of foliations}
\label{sec:diff-quas}

In this section, we give a partial converse to Theorem \ref{thm:1}: we show that every diffeological quasifold is the leaf space of a foliation (Corollary \ref{cor:quas-is-leaf-space}), and give sufficient conditions for the associated foliation to be Riemannian (Corollary \ref{cor:riemannian-quas-is-leaf-space}). First, given a diffeological quasifold $X$, we may take its structural pseudogroup $\Psi$, as described in Subsection \ref{sec:struct-pseud-quas}. This is a countably-generated pseudogroup, so we begin with the following proposition.

\begin{prop}
  \label{prop:quastofol}
  Fix a manifold $N$, let $\{\psi_i\}_{i=1}^\infty$ be a countable set of transitions of $N$, and let $\Psi$ denote the pseudogroup generated by the $\psi_i$. There is a foliated manifold $(M, \mathcal{F})$ whose holonomy pseudogroup is (isomorphic to) $\Psi$. In particular, $M/\mathcal{F} \cong N/\Psi$. If $N$ has a Riemannian metric for which the $\psi_i$ are local isometries, then $(M, \mathcal{F})$ is a Riemannian foliation. 
\end{prop}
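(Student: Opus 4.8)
The plan is to realize $\Psi$ as the holonomy pseudogroup of an explicit \emph{suspension-type} foliation, and then read off the remaining two claims.

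First I would build $(M,\cl{F})$ by attaching ``bands'' to $N \times \R$, one for each generator. Start with $N \times \R$, foliated by the lines $\{x\} \times \R$, so that the slice $N \times \{0\}$ is a codimension-$q$ complete transversal ($q = \dim N$). For each $i$, with $U_i := \dom \psi_i$ and $U_i' := \psi_i(U_i)$, glue in a band $B_i \cong U_i \times (0,1)$ carrying the product foliation $\{x\} \times (0,1)$, whose lower collar is attached to the slab over $U_i$ at some height and whose upper collar is attached to the slab over $U_i'$ via $\psi_i$ at a different height. Using disjoint height intervals for the collars of distinct bands and smoothing the attachments over open collars (a routine handle attachment), $M$ becomes a smooth $(q+1)$-manifold and the product foliations match along the collars to give a smooth codimension-$q$ foliation $\cl{F}$, with $N := N \times \{0\}$ still a complete transversal meeting every leaf. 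Since there are countably many bands and $N$ is second-countable, so is $M$.

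Second, I would identify the holonomy pseudogroup $\Psi_N(M,\cl{F})$ of $\cl{F}$ with respect to the transversal $N$. By construction, the leafwise path that enters the band $B_i$ from the slice over $x \in U_i$ and exits over $\psi_i(x)$ realizes, as its holonomy, precisely $\germ_x \psi_i$; following nearby leaves gives the germ of $\psi_i$. Hence every $\psi_i$ (and, by traversing bands backwards and in succession, every composite, inverse, and restriction of the $\psi_i$) lies in $\Psi_N(M,\cl{F})$, so $\Psi \subseteq \Psi_N(M,\cl{F})$. Conversely, any holonomy transition decomposes along a leafwise path into a finite chain of band-crossings and motions within $N \times \R$, each contributing either the germ of some $\psi_i^{\pm 1}$ or the identity, whence $\Psi_N(M,\cl{F}) \subseteq \Psi$. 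Therefore $\Psi_N(M,\cl{F}) = \Psi$. The claim $M/\cl{F} \cong N/\Psi$ is then immediate: two points of the complete transversal $N$ lie in the same leaf if and only if they are joined by a holonomy transition, i.e.\ related by an element of $\Psi$, and the transversal inclusion makes $N/\Psi \to M/\cl{F}$ a diffeological diffeomorphism.

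Finally, for the Riemannian statement, suppose $g_N$ is a metric on $N$ for which each $\psi_i$ is a local isometry. Because $\Psi$ is generated by the $\psi_i$ and is closed under composition, inversion, and restriction, \emph{every} element of $\Psi$ is a local isometry of $g_N$; that is, $g_N$ is a $\Psi$-invariant metric on the complete transversal. This is exactly the data of a transversely Riemannian metric in the sense of Definition \ref{transverse-riemann}: the foliated atlas of $M$ coming from the construction has transverse submersions valued in open subsets of $N$, and pulling $g_N$ back along them yields a metric on $N\cl{F}$ whose consistency over chart overlaps is guaranteed by the fact that the change-of-transversal maps lie in $\Psi$, hence preserve $g_N$. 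By Definition \ref{bundle-like}, $(M,\cl{F})$ is then a Riemannian foliation.

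I expect the genuine work to be in the first two paragraphs: carrying out the band attachment so that $M$ is honestly smooth (no residual corners) and, above all, verifying that the holonomy pseudogroup is \emph{exactly} $\Psi$, with the correct germs and no spurious elements, and that $N$ is a complete transversal. Once that is in place, the identification $M/\cl{F} \cong N/\Psi$ and the Riemannian clause follow with little extra effort, the latter being simply the observation that $\Psi$-invariance of $g_N$ is the same thing as a transverse metric on $(M,\cl{F})$.
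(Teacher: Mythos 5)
Your overall strategy---attach one ``band'' per generator to a trivially foliated base so that crossing the band realizes $\germ_x\psi_i$ as holonomy, then read off the pseudogroup, the leaf space, and the transverse metric---is the same strategy as the paper's (the construction goes back to Hector, via Moerdijk--Mr\v{c}un). But there is a genuine gap, and it sits exactly where you wave it off as ``a routine handle attachment'': the quotient you describe is not Hausdorff, hence not a manifold in this paper's sense. Concretely, if the lower collar $U_i \times (0,\epsilon)$ of the band $B_i = U_i \times (0,1)$ is glued to an open set $U_i \times (a, a+\epsilon)$ lying in the \emph{interior} of the base $N \times \R$, then for $x \in U_i$ the sequence $(x,t_n)$ with $t_n \nearrow \epsilon$ converges in $B_i$ to $(x,\epsilon)$ (a band point outside the collar) while its image converges in the base to $(x, a+\epsilon)$ (a base point outside the glued region); these are two distinct points of the quotient, and both are limits of the same sequence. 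This is the line-with-two-origins phenomenon, and it is not a smoothness or corner issue, so no smoothing repairs it; reversing the collar orientation does not help either, since both edges of the target region are interior to $N\times\R$. Worse, it cannot be fixed within your picture: a Hausdorff gluing requires each identified region to run off an \emph{open end} of both pieces simultaneously, and a leaf $\{x\}\times\R$ of your base has only two ends, whereas countably many bands with overlapping domains $\dom\psi_i$ must all be attached over $x$.

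This is precisely why the paper's construction carries an extra dimension. There the base is the slab $N \times \R \times (0,1)$, the band for $\psi_i$ is $\dom \psi_i \times (i,i+1) \times (0,3)$---so each band has its own slot $s \in (i,i+1)$, and its bottom portion $t \in (0,1)$ literally coincides with part of the slab rather than being glued to it---and the only identification is $(x,s,t) \sim (\psi_i(x), s, t-2)$ for $t \in (2,3)$, which matches an open end of the band ($t \to 3$) with an open end of the slab ($t-2 \to 1$), while the other end ($t \to 2$, image $t - 2 \to 0$) exits the space entirely. Hausdorffness is then not automatic but is exactly the content of the closedness of the graph $R$ verified in Lemma \ref{lem:godemont} via Godement's criterion; that verification is where the real work of the proof lives. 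Your remaining steps---identifying the holonomy pseudogroup through the induced Haefliger cocycle, deducing $M/\cl{F} \cong N/\Psi$, and producing the transverse metric by pulling back a $\Psi$-invariant metric on $N$ along the cocycle submersions (the paper quotes \cite[Remark 2.7 (2)]{MM03} for this, and also notes the needed connectedness of the submersion fibers)---are sound in spirit and parallel the paper, but as written they rest on a space that is not a manifold.
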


Except for the last assertion, this is due to Moerdijk and Mrc\u{u}n \cite[Example 5.23 (2)]{MM03}, who learned of the construction from Pradines, who in turn attributed it to  Hector. We give construction here in more detail, and justify the last claim.
\begin{proof}
  Set
  \begin{equation*}
    V := (N \times \R \times (0,1)) \cup \bigcup_{i=1}^\infty (\dom \psi_i \times (i,i+1) \times (0,3)).
  \end{equation*}
  This is an open subset of $N \times \R \times \R$. Define the equivalence relation ${\sim}$ on $V$ by
  \begin{equation*}
    (x,s,t) \sim (\psi_i(x), s, t-2) \text{ if } s \in (i, i+1) \text{ and } x \in \dom \psi_i \text{ and } t\in (2,3).
  \end{equation*}
  Each element of $V$ is equivalent to at most one other element. The quotient space $M := V/{\sim}$ is a manifold, and the quotient map is a smooth submersion; we check the details in Lemma \ref{lem:godemont} below. Let $\mathcal{F}_0$ be the foliation of $V$ given by the fibers of the first projection $\operatorname{pr}_1:V \to N$. The quotient $\pi:V \to M$ sends leaves of $\mathcal{F}_0$ to leaves of a foliation $\mathcal{F}$ on $M$. We claim $(M, \mathcal{F})$ is the desired foliation.

    We describe a Haefliger cocycle for $\mathcal{F}$. Let
   \begin{align*}
    &V^\Top_i := \dom \psi_i \times (i,i+1) \times (1.5,3), \quad V^\Mid_i := \dom \psi_i \times (i,i+1) \times (1,2), \\ &V^\Bot_i:= \dom \psi_i \times (i,i+1) \times (0,1.5), \quad   V^\Bot := N \times \R \times (0,1).
   \end{align*}
   Here ``top'', ``bot'', and ``mid'' stand for ``top,'' ``bottom,'' and ``middle,'' respectively. Define the map $\sigma^\Bot$ by the following diagram
\begin{equation*}
  \begin{tikzcd}
    V^\Bot \ar[d, "\pi"] \ar[dr, "{\operatorname{pr}_1}"] & \\
    \pi(V^\Bot) \ar[r, "\sigma^\Bot"']& N.
  \end{tikzcd}
\end{equation*}
The map $\sigma^\Bot$ is a well-defined submersion because the restriction of $\pi$ in the diagram is a diffeomorphism, and $\operatorname{pr}_1$ is a submersion. Writing $[x,s,t]$ for $\pi(x,s,t)$, we have an explicit expression for $\sigma^\Bot$:
\begin{equation*}
  \sigma^\Bot([x,s,t]) =
  \begin{cases}
    x &\text{if } t \in (0,1) \\
    \psi_i(x) &\text{if } s \in (i,i+1) \text{ and } t \in (2,3).
  \end{cases}
\end{equation*}
We similarly define the submersions $\sigma_i^\bullet:\pi(V^\bullet_i) \to N$, for $\bullet \in \{\Top,\Mid,\Bot\}$.  For $s \in (i, i+1)$, we have

\begin{align*}
    \sigma^\Bot_i([x,s,t]) &=
                       \begin{cases}
                         x &\text{if } t \in (0,1.5) \\
                         \psi_i(x) &\text{if } t \in (2,3),
                       \end{cases} \\
    \sigma^\Mid_i([x,s,t]) &= x, \\
  \sigma^\Top_i([x,s,t]) &=
                       \begin{cases}
                         \psi_i^{-1}(x) &\text{if } t \in (0,1) \\
                         x &\text{if } t \in (1.5,3).
                       \end{cases}
\end{align*}
The map $\sigma_i^\Top$ is well-defined because if $[x,s,t] \in \pi(V^\Top_i)$, then a representative $(x,s,t)$ is in either $\dom \psi_i \times (i,i+1) \times (1.5, 3)$, or in $\operatorname{im} \psi_i \times (i,i+1) \times (0,1)$.

Let $M_i^\bullet := \pi(V^\bullet_i)$, and $M^\Bot:= \pi(V^\Bot)$. A Haefliger cocycle for $\mathcal{F}_0$ is $\{(V, \operatorname{pr}_1, \operatorname{id})\}$. Because the $\sigma_i^\bullet$ and $\sigma^\Bot$ factor $\operatorname{pr}_1$ along $\pi$, and $\{M_i^\bullet\} \cup \{M^\Bot\}$ covers $M$, the $\sigma_i^\bullet$ induce a Haefliger cocycle representing $\mathcal{F}$. The only non-trivial elements of this cocycle are indicated in the diagram below:
\begin{equation*}
  \begin{tikzcd}
    &    M^\Top_i \cap M^\Bot_i \ar[dl, "\sigma_i^\Top"'] \ar[dr, "\sigma_i^\Bot"] &\\
    \dom \psi_i  \ar[rr, "\psi_i"] & & \operatorname{im} \psi_i.
  \end{tikzcd}
\end{equation*}
Commutativity follows from our expressions for $\sigma_i^\bullet$, and the fact that if $[x,s,t] \in M^\Top_i \cap M^\Bot_i$, then $t \not \in [1,2]$. While \emph{a priori} the intersection $M_i^\Top \cap M^\Bot$ induces another non-trivial cocycle element, in fact $M_i^\Top \cap M^\Bot = M_i^\Top \cap M_i^\Bot$ and $\sigma^\Bot = \sigma_i^\Bot$ on this intersection.

The holonomy pseudogroup of $(M, \mathcal{F})$ is precisely the pseudogroup generated by the elements of the Haefliger cocycle. But these are simply the $\psi_i$, and this proves that the holonomy pseudogroup of $M$ is $\Psi$.
The fiber of $\sigma^\Bot$ over $y \in N$ is $\pi(\{x\} \times \R \times (0,1))$, which is connected. Similarly, the $s_i^\bullet$ have connected fibers. By \cite[Remark 2.7 (2)]{MM03}, if $N$ is a Riemannian manifold and each $\psi_i$ is a local isometry, then $(M, \mathcal{F})$ admits a transverse metric, i.e.\ $(M, \mathcal{F})$ is a Riemannian foliation.
\end{proof}

This concludes the proof of Proposition \ref{prop:quastofol}. It is now fairly immediate that every diffeological quasifold is a leaf space of some foliation.
\begin{cor}\label{cor:quas-is-leaf-space}
  Suppose $X$ is a diffeological quasifold. Then $X$ is diffeomorphic to the leaf space of some foliation $(M, \mathcal{F})$.
\end{cor}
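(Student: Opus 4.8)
The plan is to chain the two propositions already in hand: Proposition \ref{prop:quastopseudo}, which presents the quasifold $X$ as the quotient $V/\Psi$ of a manifold by a countably-generated pseudogroup, and Proposition \ref{prop:quastofol}, which realizes any countably-generated pseudogroup as the holonomy pseudogroup of a foliation. Composing the resulting diffeomorphisms $X \cong V/\Psi \cong M/\mathcal{F}$ will give the claim, so the work is entirely in setting up the hypotheses of these two results correctly.

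First I would produce a \emph{countable} atlas for $X$. By Definition \ref{def:quasifold} the space $X$ is second-countable, hence Lindel\"of in its D-topology, so from the (possibly uncountable) collection of quasifold charts covering $X$ I can extract a countable subcollection $\mathcal{A} = \{F_i : U_i \to V_i/\Gamma_i\}$ that still covers $X$. Applying Proposition \ref{prop:quastopseudo} to $\mathcal{A}$ then yields the manifold $V := \bigsqcup_i V_i$, a diffeomorphism $V/\Psi \cong X$, and the crucial fact that $\Psi$ is countably-generated. Here I would note that $V$ is a genuine manifold in the sense required by Proposition \ref{prop:quastofol}: since the dimension $n$ is fixed across all charts, each $V_i$ is an open subset of $\R^n$, and a countable disjoint union of such is a Hausdorff, second-countable $n$-manifold. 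I would then fix a countable generating set $\{\psi_i\}_{i=1}^\infty$ of transitions for $\Psi$, which exists precisely because $\Psi$ is countably-generated.

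Finally I would feed $N := V$ together with the generators $\{\psi_i\}$ into Proposition \ref{prop:quastofol}. This produces a foliated manifold $(M, \mathcal{F})$ whose holonomy pseudogroup is isomorphic to $\Psi$, and in particular a diffeomorphism $M/\mathcal{F} \cong V/\Psi$. Composing with the diffeomorphism from Proposition \ref{prop:quastopseudo} gives $M/\mathcal{F} \cong V/\Psi \cong X$, as desired. The one point demanding care---and the closest thing to an obstacle in an otherwise formal argument---is the countability bookkeeping: one must invoke second-countability to pass to a countable atlas, so that Proposition \ref{prop:quastopseudo} delivers a \emph{countably}-generated $\Psi$, since Proposition \ref{prop:quastofol} accepts only a countable set of generating transitions. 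Everything else is a direct appeal to the two propositions.
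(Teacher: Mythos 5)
Your proof is correct and follows essentially the same route as the paper's: apply Proposition \ref{prop:quastopseudo} to a countable atlas to get $X \cong V/\Psi$ with $\Psi$ countably-generated, then apply Proposition \ref{prop:quastofol} to realize $\Psi$ as a holonomy pseudogroup, giving $X \cong V/\Psi \cong M/\mathcal{F}$. The only difference is that you spell out the countability bookkeeping (second-countable implies Lindel\"of, hence a countable atlas exists) which the paper leaves implicit in the phrase ``choose a countable atlas.''
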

\begin{proof}
  Given $X$, choose a countable atlas and take $V$ and the pseudogroup $\Psi$ from Proposition \ref{prop:quastopseudo}. Then take $(M, \mathcal{F})$ from Proposition \ref{prop:quastofol}. By construction, we have
  \begin{equation*}
    X \cong V/\Psi \cong M/\mathcal{F}.
  \end{equation*}
\end{proof}

The last clause of Proposition \ref{prop:quastofol} ensures that if $V$ has a Riemannian metric and $\Psi$ acts by local isometries, then the resulting foliation is Riemannian. This lets us refine the above corollary.

\begin{cor}\label{cor:riemannian-quas-is-leaf-space}
  Suppose $X$ is a diffeological quasifold, and $\mathcal{A} = \{F_i:U_i \to V_i/\Gamma_i\}$ is a countable atlas. If each $V_i$ carries a Riemannian metric such that the $\Gamma_i$ act by isometries, and if the associated pseudogroup $\Psi$ consists entirely of local isometries, then the foliation $(M, \mathcal{F})$ from the previous corollary is a Riemannian foliation. 
\end{cor}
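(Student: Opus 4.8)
The plan is to reduce the statement entirely to the final clause of Proposition \ref{prop:quastofol}, which already asserts that the foliation built from a countable family of local isometries is Riemannian; the only work is to verify that the construction underlying Corollary \ref{cor:quas-is-leaf-space}, when fed the atlas $\mathcal{A}$, takes isometric input.

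First I would put the obvious metric on $V := \bigsqcup_i V_i$, namely the metric $g_V$ restricting to the given metric on each component $V_i$; this is well-defined precisely because the $V_i$ are disjoint. The two hypotheses are designed exactly so that $\Psi$, viewed as a pseudogroup of transitions of $(V, g_V)$, acts by local isometries: this is literally the hypothesis that $\Psi$ consists of local isometries, and it is consistent with the first hypothesis because each $\gamma \in \Gamma_i$ determines a transition $V_i \dashrightarrow V_i$ lying in $\Psi$ (it completes the defining diagram \eqref{eq:definepsi} with $i=j$ and $F_i = F_j$), and these act isometrically by assumption.

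Next I would recall the explicit countable generating family for $\Psi$ from the proof of Proposition \ref{prop:quastopseudo}: for each pair $i,j$ one selects lifts $\psi_k$ completing \eqref{eq:definepsi} whose domains cover $\pi_i^{-1}(F_i^{-1}(U_j))$, and the collection $\{\gamma \circ \psi_k \mid \gamma \in \Gamma_j\}$, as $i,j$ range over all pairs, generates $\Psi$; since each $\Gamma_j$ is countable this family is countable. Every member of this family lies in $\Psi$, hence is a local isometry of $(V, g_V)$. (Equivalently: each $\psi_k \in \Psi$ is a local isometry and each $\gamma \in \Gamma_j$ is an isometry, so the composite is a local isometry.)

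Finally I would apply Proposition \ref{prop:quastofol} with $N = (V, g_V)$ and with the above countable generating family in the role of the $\psi_i$. Its conclusion produces a foliated manifold $(M, \cl{F})$ whose holonomy pseudogroup is $\Psi$, so that $M/\cl{F} \cong V/\Psi \cong X$ exactly as in Corollary \ref{cor:quas-is-leaf-space}; and because the chosen generators are local isometries of $(V, g_V)$, the last clause of Proposition \ref{prop:quastofol} delivers that $(M, \cl{F})$ is Riemannian. I expect no substantive obstacle: the only step meriting care is confirming that the disjoint-union metric $g_V$ is simultaneously the metric for which the chart groups $\Gamma_i$ act isometrically and the one witnessing that all of $\Psi$ consists of local isometries, so that the specific generating set extracted from Proposition \ref{prop:quastopseudo} genuinely inherits the isometry property required by Proposition \ref{prop:quastofol}.
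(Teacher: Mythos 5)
Your proposal is correct and matches the paper's own reasoning: the paper proves this corollary precisely by observing (in the sentence preceding its statement) that the disjoint-union metric on $V = \bigsqcup_i V_i$ makes the countable generating set of $\Psi$ from Proposition \ref{prop:quastopseudo} consist of local isometries, so the last clause of Proposition \ref{prop:quastofol} applies. Your additional check that the $\Gamma_i$-hypothesis is consistent with (indeed subsumed by) the $\Psi$-hypothesis, since each $\gamma \in \Gamma_i$ completes the diagram \eqref{eq:definepsi} with $i = j$ and hence lies in $\Psi$, is a correct and harmless refinement.
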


This suggests a possible definition of a Riemannian quasifold. We will end with the proof of Lemma \ref{lem:godemont} required for Proposition \ref{prop:quastofol}.

\begin{lem}
  \label{lem:godemont}
    The quotient space $M:= V/{\sim}$ has a unique manifold structure for which the quotient map is a smooth submersion.
  \end{lem}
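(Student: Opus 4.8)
The plan is to verify the hypotheses of Godement's criterion (the quotient-manifold theorem for equivalence relations): if $R \subseteq V \times V$ is a closed embedded submanifold and the first projection $\pr_1 \colon R \to V$ is a submersion, then $V/R$ carries a \emph{unique} manifold structure for which the quotient map is a submersion, and the resulting quotient is Hausdorff. So the whole task reduces to producing $R$ and checking these conditions.

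First I would package the relation as a single gluing diffeomorphism. Set $A := \bigcup_i \dom\psi_i \times (i,i+1) \times (2,3)$ and $B := \bigcup_i \operatorname{im}\psi_i \times (i,i+1) \times (0,1)$, both open in $V$, and let $\Phi \colon A \to B$ be given by $\Phi(x,s,t) = (\psi_i(x), s, t-2)$ for $s \in (i,i+1)$. Since each $\psi_i$ is a transition, $\Phi$ is a diffeomorphism, with inverse $(y,s,u) \mapsto (\psi_i^{-1}(y), s, u+2)$. The $t$-ranges are disjoint, so $A \cap B = \emptyset$; this recovers the paper's observation that each point of $V$ is $\sim$-equivalent to at most one other (identifications cannot be chained), and it shows that the graph of $\sim$ is exactly $R = \Delta_V \cup \Gamma_\Phi \cup \Gamma_{\Phi^{-1}}$, where $\Delta_V$ is the diagonal and $\Gamma_\Phi,\Gamma_{\Phi^{-1}}$ are the graphs of $\Phi$ and $\Phi^{-1}$. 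The same disjointness $A \cap B = \emptyset$ forces these three pieces to be pairwise disjoint: for instance a point in $\Delta_V \cap \Gamma_\Phi$ would lie in $A \cap B$.

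The embeddedness and submersion conditions are then routine, \emph{provided} I first establish that each of the three pieces is closed in $V \times V$. Granting that, the three pieces are mutually separated, so $R$ is their topological disjoint union; each is an embedded submanifold (a diagonal, respectively the graphs of diffeomorphisms on open sets), hence so is $R$, and $R$ is closed. For the projection, $\pr_1$ restricts to a diffeomorphism of $\Delta_V$, $\Gamma_\Phi$, $\Gamma_{\Phi^{-1}}$ onto $V$, $A$, $B$ respectively, so $\pr_1|_R$ is a surjective submersion.

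The one substantive step, and the main obstacle, is this closedness; it is exactly where the buffer built into the construction (the untouched band $t \in [1,2]$ together with the shift $t \mapsto t-2$) is used. Since $\Delta_V$ is closed, it suffices to treat $\Gamma_\Phi$ (the case of $\Gamma_{\Phi^{-1}}$ is symmetric). Suppose $(a_n, \Phi(a_n)) \to (v,w) \in V \times V$ with $a_n = (x_n, s_n, t_n)$ and $t_n \in (2,3)$. Then the third coordinate of $v$ is $t_v := \lim t_n \in [2,3]$ and that of $w$ is $t_v - 2 \in [0,1]$. If $t_v = 2$ then $w$ has third coordinate $0 \notin (0,1)\cup(0,3)$, so $w \notin V$; if $t_v = 3$ then $v$ has third coordinate $3 \notin (0,3)$, so $v \notin V$. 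Hence $t_v \in (2,3)$, and the only part of $V$ with third coordinate in $(2,3)$ is a cylinder $\dom\psi_j \times (j,j+1) \times (0,3)$, forcing $s_v \in (j,j+1)$ and $x_v \in \dom\psi_j$. As these sets are open, for large $n$ the index is $i = j$ and the first two coordinates converge inside $\dom\psi_j \times (j,j+1)$, so $v \in \dom\psi_j \times (j,j+1) \times (2,3) \subseteq A$; by continuity of $\Phi$ then $w = \Phi(v)$, i.e. $(v,w) \in \Gamma_\Phi$. With closedness in hand, Godement's criterion yields a unique Hausdorff manifold structure on $M = V/R$ making $\pi$ a submersion; finally $\pi$ is open, since the saturation of an open $U \subseteq V$ is $U \cup \Phi(U \cap A) \cup \Phi^{-1}(U \cap B)$, which is open, so second countability of $V$ passes to $M$.
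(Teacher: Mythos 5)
Your proof is correct, and while it verifies the same Godement criterion as the paper, it organizes the verification in a genuinely different way. You package the countably many gluings into a single diffeomorphism $\Phi\colon A \to B$ between \emph{disjoint} open subsets of $V$ (disjoint because of the separated $t$-ranges $(2,3)$ and $(0,1)$), and then decompose $R$ globally as $\Delta_V \sqcup \Gamma_\Phi \sqcup \Gamma_{\Phi^{-1}}$. Proving closedness of each piece first, you get mutual separation from pairwise disjointness, and embeddedness plus the submersion property then come essentially for free, since each piece is the graph of a diffeomorphism. The paper instead proceeds pointwise: it constructs explicit slice charts for $R$ around each type of point (off-diagonal points, where $R$ is locally the graph of $(x,s,t)\mapsto(\psi_i(x),s,t-2)$, and diagonal points, split into $t_0\geq 1$ and $t_0<1$), and then runs a separate sequence argument for closedness with cases on $s_0$ and $t_0$. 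Both arguments exploit the same mechanism — the untouched band $t\in[1,2]$ and the shift by $2$, so that would-be limit points with $t\to 2$ or $t\to 3$ escape $V$ — but your version concentrates all the analysis into one closedness lemma for $\Gamma_\Phi$ and collapses the paper's case analysis, at the cost of the slightly more abstract step that pairwise disjoint closed embedded submanifolds of equal dimension have embedded union. Your writeup also makes explicit two points the paper leaves implicit in citing Godement: Hausdorffness of $M$ (from closedness of $R$) and second countability (from openness of $\pi$, via the explicit saturation $U \cup \Phi(U\cap A)\cup \Phi^{-1}(U\cap B)$), both needed for $M$ to be a manifold in the paper's sense.
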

  \begin{proof}
    We use Godemont's criteria to show $V/{\sim}$ is a manifold. In other words, letting $R$ denote the graph of ${\sim}$ in $V \times V$, we must show that $R$ is a closed submanifold of $V \times V$ and that the second projection $V \times V \to V$ restricts to a submersion $R \to V$.

    Let us first show that $R$ is a submanifold, and the second projection $R \to V$ is a submersion.
  \begin{itemize}
  \item At points $((x_0,s_0,t_0), (\psi_i(x_0), s_0, t_0-2)) \in R$, take the open neighbourhood
    \begin{equation*}
      (\dom \psi_i \times (i,i+1) \times (2,3)) \times (\operatorname{im} \psi_i \times (i,i+1) \times (0,1)) \subseteq V \times V.
    \end{equation*}
    The intersection of $R$ with this neighbourhood is precisely the graph of the function
    \begin{equation}\label{eq:chart1}
      (\dom \psi_i \times (i,i+1) \times (2,3)) \to V, \quad (x,s,t) \mapsto (\psi_i(x), s, t-2).
    \end{equation}
The usual parametrization of the graph of \eqref{eq:chart1} is a chart of $R$. In this chart, the second projection $R \to V$ is precisely the map \eqref{eq:chart1}, which is a submersion. A similar procedure works at points $((\psi_i(x_0), s_0, t_0-2), (x_0,s_0, t_0)) \in R$.
  \item At points $(p_0,p_0) \in R$, we consider two cases. If $t_0 \geq 1$, then necessarily $y_0 \in \dom(\psi_i)$ and $t_0 \in (i,i+1)$ for some $i$. We take an open neighbourhood
    \begin{equation*}
      (\dom \psi_i \times (i,i+1) \times J)^2,
    \end{equation*}
    (the exponent denotes the Cartesian product of the set with itself) where $J$ is a open interval in $(0,3)$ about $t_0$ chosen small enough so that it does not intersect both $(2,3)$ and $(0,1)$. The intersection of $R$ with this neighbourhood is the graph of the identity over $(\dom \psi_i \times (i,i+1) \times J)$, and the parametrization of this graph gives a submanifold chart. In these coordinates, the second projection $R \to V$ is the identity, hence a submersion.

    If $t_0 < 1$, then the intersection of $R$ and the open neighbourhood
    \begin{equation*}
      (N \times \R \times (0,1))^2
    \end{equation*}
is the graph of the identity over $N \times \R \times (0,1)$. Again, in the induced coordinates, the second projection $R\to V$ is a submersion.
  \end{itemize}
  Now we show that $R$ is closed. Suppose that $\{(p_j, p_j')\}_{j=1}^\infty$ is a sequence in $R$ with limit point $(p_0, p_0')$ in $V \times V$. Because $(p_j,p_j') \in R$, we must have $s_j' = s_j$ for all $j$, and so $s_0' = s_0$. We must show $(p_0,p_0') \in R$; we consider three cases.
  \begin{itemize}
  \item If $s_0 < 1$, then $s_j < 1$ for sufficiently large $j$, and necessarily $x_j' = x_j$ and $t_j' = t_j$. The limit is $(p_0,p_0) \in R$.
  \item If $s_0 = i$ for some $i = 1,2,\ldots $, then both $t_0$ and $t_0'$ are in $(0,1)$. Thus $t_j$ and $t_j'$ are in $(0,1)$ for large $j$, and necessarily $x_j' = x_j$ and $t_j' = t_j$. The limit point is $(p_0,p_0) \in R$.
  \item If $s_0 \in (i,i+1)$, there are three sub-cases.
    \begin{itemize}
    \item  If both $t_0 \leq 1$ and $t_0' \leq 1$, then eventually $t_j$ and $t_j'$ are strictly less than $2$, and necessarily $x_j' = x_j$ and $t_j' = t_j$. The limit is $(p_0,p_0) \in R$.
    \item If $1 < t_0 < 2$, then $1 < t_j < 2$ for large $j$, and necessarily $x_j' = x_j$ and $t_j' = t_j$. The limit is $(p_0,p_0) \in R$. The case $1 < t_0' < 2$ is similar.
    \item If $t_0 \geq 2$, then $t_j > 1$ for large $j$, so the point $(x_j', s_j', t_j')$ is either
          \begin{equation*}
      (x_j, s_j, t_j) \text{ or } (\psi_i(x_j), s_j, t_j-2).
    \end{equation*}
    The possible limit points are therefore either
    \begin{equation*}
      (p_0,p_0) \text{ or } (p_0, (\psi_i(x_0), s_0, t_0-2)).
    \end{equation*}
Whenever these are in $V\times V$, they are in $R$ too. The case $t_0' \geq 2$ is similar.
\end{itemize}
\end{itemize}
\end{proof}

%===========================================================
  \bibliographystyle{amsalpha}
  \bibliography{riemannian-quasifolds-submission}
  
\end{document}